\numberwithin{equation}{section}
\numberwithin{equation}{section}
\newtheorem{thm}{Theorem}[section]
\newtheorem{lem}[thm]{Lemma}
\theoremstyle{definition}
\newtheorem{defn}[thm]{Definition}
\newtheorem{remark}[thm]{Remark}
\newtheorem*{claim}{Claim}
\def\real{{\mathbb R}}
\newcommand{\loce}{\equiv_L}
\newcommand{\R}{\mathbb{R}}
\newcommand{\Q}{\mathbb{Q}}
\newcommand{\N}{\mathbb{N}}
\newcommand{\id}{{\mbox{\rm id}}}
\def\int{{\mathbb Z}}
\def\ep{\varepsilon}
\newcommand{\sF}{\mathcal{F}}
\newcommand{\ww}{\omega^\omega}
\newcommand{\pqc}{\mathcal{B}_{\vec p, \vec q, \vec n}}
\newcommand{\snc}{\mathcal{B}_{\vec S,, \vec n}}
\newcommand{\bs}{\boldsymbol{\Sigma}}
\newcommand{\bp}{\boldsymbol{\Pi}}
\newcommand{\res}{\restriction}
\newcommand{\norm}{||}
\newcommand{\spn}{\text{span}}
\newcounter{mnotecount}[section]
\newcommand{\mnote}[1]{} % Deletes mnotes.  Uncomment for final copy
\begin{document}

\title[The complexity of uniform homeomorphism]{On the complexity of the uniform homeomorphism relation between separable Banach spaces}
\author{Su Gao, Steve Jackson, and B\"{u}nyamin Sar\i}
\address{Department of Mathematics, University of North Texas,
1155 Union Circle \#311430, Denton, TX 76203-5017}
\email{sgao,jackson,bunyamin@unt.edu}
\thanks{The first author acknowledges the partial support of the NSF grant
DMS-0501039 and a Templeton Foundation research grant for his
research. The third author acknowledges an NSF funded Young
Investigator Award of the Analysis and Probability workshop at the
Texas A\&M University in 2008.} \maketitle

\tableofcontents

\section{Introduction}

Recently, there has been a growing interest in understanding the
complexity of common analytic equivalence relations between
separable Banach spaces via the notion of Borel reducibility in
descriptive set theory (see \cite{Bo} \cite{FG} \cite{FLR}
\cite{FR1} \cite{FR2} \cite{Me}). In general, the notion of Borel
reducibility yields a hierarchy (though not linear) among
equivalence relations in terms of their relative complexity.

The most important relations between separable Banach spaces include
the isometry, the isomorphism, the equivalence of bases, and in
nonlinear theory, Lipschitz and uniform homeomorphisms. The exact
complexity of the first four relations has been completely
determined by recent work in the field. Using earlier work of Weaver
\cite{We}, Melleray \cite{Me} proved that the isometry between
separable Banach spaces is a universal orbit equivalence relation.
Rosendal \cite{Ro} studied the equivalence of bases and showed that
it is a complete $K_\sigma$ equivalence relation. Using the work of
Argyros and Dodos \cite{AD} on amalgamations of Banach spaces,
Ferenczi, Louveau, and Rosendal \cite{FLR} recently showed that the
isomorphism, the (complemented) biembeddability and the Lipschitz
equivalence between separable Banach spaces, as well as the
permutative equivalence of Schauder bases, are complete analytic
equivalence relations. The Borel reducibility among these
equivalence relations, as well as some other equivalence relations
we will be dealing with in this paper, is illustrated in
Figure~\ref{fig:ers} (see Section~\ref{sec:Pre} below for the
definitions of the equivalence relations). Note that in particular
the complete analytic equivalence relation $E_{{\bf\Sigma}^1_1}$ is
the most complex one in the Borel reducibility hierarchy of all
analytic equivalence relations.
%While the same is shown to be true for the
%uniform homeomorphism relation between complete separable metric
%spaces,
\begin{figure}[h]
\begin{center}
\setlength{\unitlength}{.3cm}
\begin{picture}(20,25)(0,0)

\put(10,1){\circle*{.3}}
\put(12,0.2){\makebox(0,0)[b]{$\id(2^\omega)$}}
\put(10,1){\line(0,1){3}}
\put(10,4){\circle*{.3}}
\put(11,2.8){\makebox(0,0)[b]{$E_0$}}

\put(10,4){\line(-3,2){6}}
\put(4,8){\circle*{.3}}
\put(3,6.5){\makebox(0,0)[b]{$E_0^\omega$}}
\put(10,4){\line(-1,2){2}}
\put(8,8){\circle*{.3}}
\put(7.5,6.5){\makebox(0,0)[b]{$E_\infty$}}
\put(10,4){\line(1,2){2}}
\put(12,8){\circle*{.3}}
\put(12.5,6.5){\makebox(0,0)[b]{$\ell_1$}}
\put(10,4){\line(3,2){6}}
\put(16,8){\circle*{.3}}
\put(16.2,6.5){\makebox(0,0)[b]{$E_1$}}

\put(4,8){\line(0,1){6}}
\put(4,14){\circle*{.3}}
\put(3,13.8){\makebox(0,0)[b]{$=^+$}}
\put(4,14){\line(2,-3){4}}
\put(16,8){\line(0,1){6}}
\put(16,14){\circle*{.3}}
\put(17,13.2){\makebox(0,0)[b]{$\ell_\infty$}}
\put(21.2,13.2){\makebox(0,0)[b]{(equivalence}}
\put(21,12){\makebox(0,0){of bases)}}
\put(16,14){\line(-2,-3){4}}
\put(16,14){\line(-4,-3){8}}

\put(10,23){\line(2,-3){6}}
\put(10,23){\line(-2,-3){6}}
\put(10,23){\circle*{.3}}
\put(10.2,23){\makebox(0,0)[b]{$E_{{\bf\Sigma}^1_1}$}}
\put(15.5,23){\makebox(0,0)[b]{(isomorphism)}}

\put(12,8){\line(-2,3){6}}
\put(6,17){\circle*{.3}}
\put(5,17){\makebox(0,0)[b]{$E_G^\infty$}}
\put(9.5,17){\makebox(0,0)[b]{(isometry)}}

\end{picture}
\caption{\label{fig:ers}Equivalence relations characterizing the
complexity of
classification problems for Banach spaces.}%\vspace*{-12pt}
\end{center}
\end{figure}
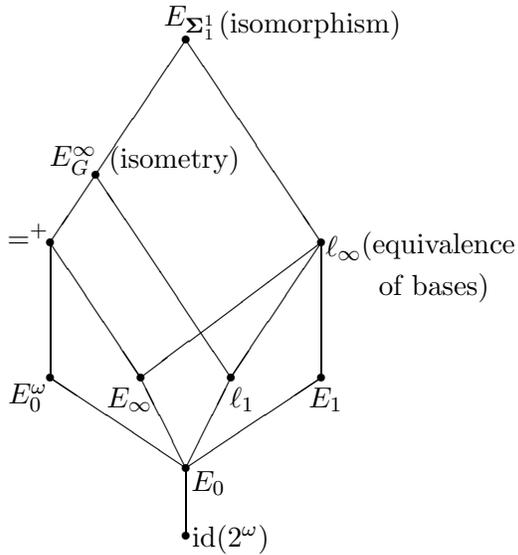

The main problem left open now is to determine the exact complexity
of the uniform homeomorphism between separable Banach spaces (see
Problem 23, \cite{FLR}). Recall that Banach spaces $X$ and $Y$ are
uniformly homeomorphic if there exists a uniformly continuous
bijection $\phi:X\to Y$ such that $\phi^{-1}$ is also uniformly
continuous. The understanding of the uniform homeomorphism relation
between Banach spaces, also known as the uniform classification of
Banach spaces, is in fact one of the main programs in the nonlinear
theory of Banach spaces.

Compared to the linear theory, results about the uniform
classification are significantly harder to prove, and their proofs
often use a combination of metric, geometric, and topological
arguments (for a good survey of methods and results see Chapter 10,
\cite{BL}). Moreover, previous efforts have been mostly focused on
the uniform classification of classical Banach spaces. For instance,
it is well-known that for $1\leq p\neq q< \infty$, $\ell_p$ and
$\ell_q$ are not uniformly homeomorphic (due to Ribe \cite{Ri1}).
Also, for $p\neq 2$ $\ell_p$ and $L_p$ are not uniformly
homeomorphic (due to Bourgain \cite{Bou} for $1\leq p<2$ and Gorelik
\cite{Go} for $2<p<\infty$). In fact, it turns out that for
$1<p<\infty$, the uniform structure of $\ell_p$ completely
determines its linear structure (a result due to Johnson,
Lindenstrauss, and Schechtman \cite{JLS}). This also generalizes to
certain finite sums of $\ell_p$ spaces.

From the point of view of descriptive set theory all previously
known results on the uniform classification give some lower bound
estimates for its complexity. However, these lower bounds are no
more complex than $\id(2^\omega)$, the least complicated one in
Figure~\ref{fig:ers}. In contrast to this, it is conceivable that
the uniform classification is as complex as Lipschitz and isomorphic
classifications, that is, it is $E_{{\bf\Sigma}^1_1}$. Thus there is
a huge gap between what was conjectured and what could be verified.

In this paper, we give a slightly improved lower bound for the
complexity of the uniform classification. We show that the complete
$K_\sigma$ equivalence relation (represented by the equivalence
relation $\ell_\infty$, to be defined in Section~\ref{sec:Pre}) is
Borel reducible to the uniform homeomorphism relation between
separable Banach spaces. As shown in Figure~\ref{fig:ers}, this in
particular implies that the uniform homeomorphism relation is at
least as complex as the equivalence of bases.

The study of the uniform classification is essentially devoted to
understanding what aspects of the linear structure of Banach spaces are
invariant under uniform homeomorphisms. As an important example, a
fundamental theorem of Ribe \cite{Ri1} asserts that the local
structure of finite-dimensional subspaces is such an invariant. The
proof of our main theorem is a straightforward application of this
theorem of Ribe. Moreover, we will isolate a concept of local
equivalence between separable Banach spaces in
Section~\ref{sec:PreB} and prove in Section~\ref{sec:Ribe} that it
is Borel bireducible with $\ell_\infty$. This means that the lower
bound we have reached for the complexity of the uniform
classification is the best possible with the consideration of local
structures.

We can now state the main theorems of this paper.

\begin{thm}\label{thm:main1} There exists a Borel family $\{ S_{\vec{x}}\,:\, \vec{x}\in
\R^\omega\}$ of separable Banach spaces such that the following are
equivalent for any $\vec{x},\vec{y}\in\R^\omega$:
\begin{enumerate}
\item[(a)] $\vec{x}-\vec{y}\in \ell_\infty$;
\item[(b)] $S_{\vec{x}}$ and $S_{\vec{y}}$ are uniformly homeomorphic;
\item[(c)] $S_{\vec{x}}$ and $S_{\vec{y}}$ are isomorphic;
\item[(d)] $S_{\vec{x}}$ and $S_{\vec{y}}$ are locally equivalent.
\end{enumerate}
\end{thm}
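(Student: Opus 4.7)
The plan is to exhibit the family $\{S_{\vec{x}}\}$ explicitly and to close the cycle (a)$\Rightarrow$(c)$\Rightarrow$(b)$\Rightarrow$(d)$\Rightarrow$(a). A natural construction takes a rapidly growing sequence of dimensions $(k_n)$ and a smooth parametrization $t\mapsto p_n(t)\in(1,3)$ of exponents, calibrated so that $p_n(x_n)$ varies at the scale of $1/\log k_n$; then the standard Banach--Mazur upper bound $d(\ell_p^k,\ell_q^k)\leq k^{|1/p-1/q|}$ translates bounded differences $|x_n-y_n|$ into uniformly bounded distortions of the $n$-th summand. One sets
\[ S_{\vec{x}} = \Bigl(\bigoplus_{n=1}^\infty \ell_{p_n(x_n)}^{k_n}\Bigr)_2. \]
Borelness of $\vec{x}\mapsto S_{\vec{x}}$ is immediate from this explicit formula once a standard Borel coding of separable Banach spaces is fixed.

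For (a)$\Rightarrow$(c): if $\sup_n |x_n-y_n|\leq M$, the calibration gives $|1/p_n(x_n)-1/p_n(y_n)|=O(1/\log k_n)$, so the $n$-th summands have Banach--Mazur distance bounded by a constant $C(M)$ uniformly in $n$. Assembling the individual isomorphisms coordinatewise in the $\ell_2$-direct sum produces a linear isomorphism $S_{\vec{x}}\cong S_{\vec{y}}$. The implication (c)$\Rightarrow$(b) is trivial since bounded linear isomorphisms are bi-Lipschitz, and (b)$\Rightarrow$(d) is a direct appeal to Ribe's theorem, which asserts that uniformly homeomorphic Banach spaces are locally equivalent in the sense defined in Section~\ref{sec:PreB}.

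The only substantial implication is (d)$\Rightarrow$(a), and this is where the main obstacle lies. My plan is to establish the stronger statement, promised in Section~\ref{sec:Ribe}, that the same family $\{S_{\vec{x}}\}$ serves as a Borel reduction of $\ell_\infty$-equivalence to local equivalence. Concretely, if $\vec{x}-\vec{y}\notin\ell_\infty$, pick indices $n_j$ with $|x_{n_j}-y_{n_j}|\to\infty$; the summand $\ell_{p_{n_j}(x_{n_j})}^{k_{n_j}}$ of $S_{\vec{x}}$ then has an $\ell_p$-type so specific that the matching lower bound $d(\ell_p^k,\ell_q^k)\gtrsim k^{|1/p-1/q|}$ prevents it from being embedded with uniformly bounded distortion into any finite sum of $\ell_{p_m(y_m)}^{k_m}$ summands of $S_{\vec{y}}$, contradicting local equivalence. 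The delicate technical point, and where the bulk of the work will live, is controlling how a finite-dimensional subspace of the $\ell_2$-sum $S_{\vec{y}}$ decomposes across its summands, so that any near-isometric embedding of $\ell_{p_{n_j}(x_{n_j})}^{k_{n_j}}$ into $S_{\vec{y}}$ must be essentially concentrated on a single summand; this is a Ribe-style localization that must be carried out by hand for the chosen calibration.
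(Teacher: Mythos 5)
Your outline of the implications (a)$\Rightarrow$(c)$\Rightarrow$(b)$\Rightarrow$(d) agrees with the paper and is correct: the Banach--Mazur upper bound $d(\ell_p^k,\ell_q^k)\le k^{|1/p-1/q|}$ together with a calibration making $|1/p_n(x_n)-1/p_n(y_n)|$ comparable to $|x_n-y_n|/\log k_n$ gives (a)$\Rightarrow$(c), and (c)$\Rightarrow$(b)$\Rightarrow$(d) are trivialities plus Ribe's theorem. The problem is that you leave the only substantial implication, (d)$\Rightarrow$(a), unproven, and the route you sketch for it is not the one that works. You propose to show that any near-isometric embedding of $\ell_{p_{n_j}(x_{n_j})}^{k_{n_j}}$ into $S_{\vec y}$ must be ``essentially concentrated on a single summand'' and then invoke the pointwise lower bound on $d(\ell_p^k,\ell_q^k)$. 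That localization step is not easy to carry out; more to the point, it is not what the paper does, and without further structural hypotheses on the exponents it need not even hold. The paper avoids localization entirely by a global argument with type constants: using $T_{2,n}(\ell_p^n)\ge n^{1/p-1/2}$ and the inequality $T_{2,n}(\ell_p^n)\le\|T\|\,\|T^{-1}\|\,T_{2,n}(S_{\vec q})$, one bounds $T_{2,n}(S_{\vec q})\le \sqrt{2}\,c\,n^{1/q-1/2}$, which immediately forces $\|T\|\,\|T^{-1}\|\gtrsim n^{1/p-1/q}$ for any linear embedding $T$, with no need to track how $T$ distributes mass across summands.

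Moreover, the estimate $T_{2,n}(S_{\vec q})\le \sqrt{2}\,c\,n^{1/q-1/2}$ is not free: it crucially uses that the exponents are confined to a fixed sequence of pairwise disjoint successive intervals $I_i=[l_i,r_i]\subseteq(1,2)$ and that the dimensions are chosen to satisfy $n_{i+1}^{1/r_{i+1}}\ge n_i^{1/l_i}$. This allows the type sum to be split at the index $i_0$ and both halves to be dominated by $n^{2/q-1}$. Your construction with a smooth parametrization $t\mapsto p_n(t)\in(1,3)$ omits both of these constraints. Without confining $p_n(\cdot)$ to an interval $I_n$ disjoint from all $I_m$, $m\ne n$, the type computation breaks down (summands with nearby exponents can absorb an $\ell_p^n$ cheaply even when $n$ is huge), and allowing exponents above $2$ would require replacing type by cotype or some other invariant, which you do not address. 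So the gap is twofold: you have not supplied the mechanism (type-constant comparison) that makes (d)$\Rightarrow$(a) go through, and the family you propose lacks the interval-disjointness and dimension-growth conditions under which that mechanism works.
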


\begin{thm}\label{thm:main2} The local equivalence between separable Banach
spaces is Borel bireducible with $\ell_\infty$.
\end{thm}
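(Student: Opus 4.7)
The direction $\ell_\infty \leq_B$ local equivalence is essentially free: by (a)$\Leftrightarrow$(d) of Theorem~\ref{thm:main1}, the Borel family $\vec{x}\mapsto S_{\vec{x}}$ already witnesses this reduction. So the content of Theorem~\ref{thm:main2} is the reverse direction, for which the plan is to build a Borel map sending a separable Banach space to a real sequence recording its finite-dimensional ``fingerprint''.

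I would work in the standard Polish space $SB$ of infinite-dimensional separable Banach spaces, coded, say, as closed linear subspaces of $C(2^\omega)$ under the Effros--Borel structure. For each $n$ let $\sM_n$ denote the Minkowski compactum of $n$-dimensional Banach spaces, viewed as a compact metric space under the $\log d_{BM}$ metric, and fix once and for all a countable dense sequence $\{F_k\}_{k\in\omega}$ in $\sM:=\bigsqcup_n \sM_n$. For $X\in SB$ and $F\in\sM$, set
\[
d_X(F)\;=\;\inf\bigl\{\,d_{BM}(F,G):G\subseteq X,\ \dim G=\dim F\,\bigr\},
\]
and define $\Phi:SB\to\R^\omega$ by $\Phi(X)_k=\log d_X(F_k)$.

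Two things require verification. First, Borelness of $\Phi$: the inequality $d_X(F_k)\le K$ is witnessed by the existence of linearly independent $v_1,\dots,v_{n_k}\in X$ satisfying $\|a\|_{F_k}\le\|\sum_i a_i v_i\|_X\le K\|a\|_{F_k}$ for every $a\in\Q^{n_k}$, and this is a Borel condition in $X$ (using a countable dense sequence supplied by an Effros--Borel selector and rational coefficients). Second, the reduction property: the multiplicativity of Banach--Mazur distance yields the 1-Lipschitz inequality $|\log d_X(F)-\log d_X(F')|\le\log d_{BM}(F,F')$ on each $\sM_n$, so density of $\{F_k\}$ gives
\[
\sup_{F\in\sM}\bigl|\log d_X(F)-\log d_Y(F)\bigr|\;=\;\sup_k\bigl|\log d_X(F_k)-\log d_Y(F_k)\bigr|.
\]
Combined with the standard fact that $X$ is $K$-crudely finitely representable in $Y$ if and only if $d_Y(F)\le K\,d_X(F)$ for every $F\in\sM$, this yields: $X$ and $Y$ are locally equivalent iff the left-hand supremum is finite iff $\Phi(X)-\Phi(Y)\in\ell_\infty$.

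The main delicate point is arranging the coding so that the infimum defining $d_X(F_k)$ is genuinely Borel rather than merely analytic; once the Effros--Borel machinery is set up this is routine, and I expect no deeper obstacle, since the argument is essentially a descriptive-set-theoretic repackaging of the classical local theory of Banach spaces.
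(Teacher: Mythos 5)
Your proposal is correct, and it captures the same underlying idea as the paper's proof: compare two spaces by the function $F\mapsto\log d_X(F)$ on finite-dimensional Banach spaces, exploit its $1$-Lipschitz dependence on $F$ (via multiplicativity of Banach--Mazur distance) to pass to a countable dense family, and appeal to an Effros--Borel selector to make the map Borel. The difference is one of packaging. The paper factors the reduction into two steps: it first introduces an auxiliary ``Hausdorff-type'' relation $F_Z$ on $Z^\omega$ for a Polish metric space $Z$ (two sequences are equivalent iff each lies within a uniformly bounded distance of the other) and proves a general lemma $F_Z\le_B\ell_\infty$ by probing with a countable $1$-net, and then reduces $\loce$ to $F_{\bar\sF}$ where $\bar\sF$ is the completion of a metric on finite-dimensional spaces-with-basis built from $\log\|T\|$, $\log\|T^{-1}\|$. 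You collapse these two steps into a single formula $\Phi(X)_k=\log d_X(F_k)$ and work directly with the Minkowski compacta $\sM_n$, which buys you a cleaner statement of the key Lipschitz/density estimate and sidesteps a delicate point in the paper's setup: the paper truncates its metric $\rho$ on $\sF$ to be bounded by $1$ so that a metric can be defined across dimensions, but as literally written a bounded metric makes the relation $F_{\bar\sF}$ trivial, so the intended (un-truncated, dimension-by-dimension) reading must be supplied by the reader --- an issue your formulation avoids since you never need a single global metric on $\sM$, only the metric within each compact $\sM_n$. On the other half, citing Theorem~\ref{thm:main1}(a)$\Leftrightarrow$(d) for $\ell_\infty\le_B\,\loce$ is exactly what the paper does (Theorem~\ref{thm:lbloce}), and the logical order is fine since that half is established independently from the construction of Section~\ref{sec:cons} together with Ribe's theorem.
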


Of course in general the local structure is not sufficient to
determine the uniform structure (for instance, as the results of
Bourgain and Gorelik mentioned above show). It is anticipated that
the complexity of the uniform classification is much more complex
than $\ell_\infty$. To verify this it would be enough to show that
the equivalence relation $E_0^\omega$ is Borel reducible to the
uniform homeomorphism relation. As Figure~\ref{fig:ers} suggests,
$E_0^\omega$ is in some sense the least complex equivalence relation
not Borel reducible to $\ell_\infty$.

In Section~\ref{sec:class} we generalize the construction in the
proof of Theorem~\ref{thm:main1} and consider a variety of classes
of separable Banach spaces with a similar construction scheme. The
uniform homeomorphism relations for these classes are all no more
complex than $\ell_\infty$. We then determine exactly what kind of
complexity the uniform homeomorphism relations on these classes can
achieve. It turns out that they can only be $\ell_\infty$, $E_1$,
$E_0$, or smooth.

Our constructions in Sections~\ref{sec:cons} and \ref{sec:class}
will yield only classes of separable Banach spaces for which the
uniform homeomorphism and the isomorphism relations coincide. In
general it is well-known that the uniform homeomorphism is a
genuinely coarser equivalence relation than the isomorphism (see,
for example, Section 10.4, \cite{BL}). Therefore it is of interest
to study the question how many different isomorphism classes a
single uniform homeomorphic class can contain. In
Section~\ref{sec:noniso} we prove the following related result.

\begin{thm}\label{thm:main3} There exists a Borel class ${\mathcal C}$
of mutually uniformly homeomorphic separable Banach spaces such that
the equality relation of countable sets of real numbers, denoted $=^+$,
is Borel reducible to the isomorphism relation on ${\mathcal C}$.
\end{thm}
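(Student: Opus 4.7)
The plan is to construct a Borel assignment $\vec{x} \in \R^\omega \mapsto Y_{\vec{x}}$ (into codes for separable Banach spaces) such that all spaces $Y_{\vec{x}}$ are mutually uniformly homeomorphic, while $Y_{\vec{x}}$ and $Y_{\vec{y}}$ are isomorphic if and only if $\{x_n : n\in\omega\} = \{y_n : n\in\omega\}$. The class $\mathcal{C}$ is then the range of this map, and the assignment itself witnesses the Borel reducibility of $=^+$ to the isomorphism relation on $\mathcal{C}$.

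The starting ingredient is a Borel parametric family $\{X_r : r \in \R\}$ of separable Banach spaces that are all mutually uniformly homeomorphic to a fixed space $X$, but whose linear isomorphism classes encode $r$ in a Borel-recoverable way. A natural source, in the spirit of this paper, is a Ribe-type family: pick a Borel map $r \mapsto (p_n(r))_n$ with $p_n(r) \downarrow 1$ so that the tail of the sequence is a Borel code for $r$, and set $X_r = \bigl(\bigoplus_n \ell_{p_n(r)}\bigr)_{\ell_2}$. Ribe's theorem and its refinements ensure that all these spaces are uniformly homeomorphic. Given $\vec{x} \in \R^\omega$, put
\[
Y_{\vec{x}} \;=\; \Bigl(\bigoplus_{n \in \omega} X_{x_n}^{(\infty)}\Bigr)_{\ell_2}, \qquad \text{where} \qquad X_r^{(\infty)} \;=\; \Bigl(\bigoplus_{k \in \N} X_r\Bigr)_{\ell_2}.
\]
The ``$(\infty)$'' operation forces $X_r^{(\infty)}$ to absorb copies of itself, so the isomorphism class of $Y_{\vec{x}}$ is visibly invariant under reorderings and repetitions of $\vec{x}$ and depends only on the set $\{x_n\}$. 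That all $Y_{\vec{x}}$ are mutually uniformly homeomorphic, to a common space $Y = (\bigoplus_n X^{(\infty)})_{\ell_2}$, follows from the standard stability of uniform homeomorphism under $\ell_2$-sums, provided the uniform homeomorphisms $X_r \to X$ can be chosen with uniformly bounded moduli of continuity; some care in the choice of the Ribe family is needed here, but parametric versions of Ribe's construction are known to allow such control.

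The main obstacle is the rigidity direction: an isomorphism $Y_{\vec{x}} \cong Y_{\vec{y}}$ must force $\{x_n\} = \{y_n\}$. The strategy is to design the family $\{X_r\}$ so as to carry a Borel-definable linear-isomorphism invariant whose restriction to $Y_{\vec{x}}$ recovers the set $\{x_n\}$. One option is to arrange that $r \in \{x_n\}$ is equivalent to $X_r$ being complementably embeddable in $Y_{\vec{x}}$; a more robust option is to equip each $X_r$ with a distinguishing isomorphic fingerprint (such as a spreading model spectrum, an $\ell_p$-index, or a sequence of Banach--Mazur distances to canonical finite-dimensional spaces) that survives $\ell_2$-sums with infinite multiplicity and uniquely identifies $r$. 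This rigidity computation, together with verification that the invariant computes exactly $\{x_n\}$ from $Y_{\vec{x}}$, is the substantive content of the argument; the Borelness of $\vec{x} \mapsto Y_{\vec{x}}$ then reduces to Borelness of $r \mapsto X_r$ and routine coding of $\ell_2$-sums.
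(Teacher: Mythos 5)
Your outline captures the right flavor --- use a Ribe-type parametric family whose members are all uniformly homeomorphic while their linear structure encodes the parameter --- but it leaves both substantive verifications as open items and, in the details it does commit to, diverges from what actually works. The paper's key structural idea, which your proposal is missing, is to include a \emph{fixed} countable dense set $D\subseteq(2,3)$ of exponents in \emph{every} space and use $c_0$-sums: with
\[
X_A=\Bigl(\sum_{p\in D}\oplus\,\ell_p\Bigr)_{c_0}\oplus\Bigl(\sum_{q\in A}\oplus\,\ell_q\Bigr)_{c_0},
\]
the uniform homeomorphism $X_A\approx X_\emptyset$ is proved by approximating each $q\in A$ by a subsequence of $D$ and interpolating Mazur maps along that subsequence --- this is the Ribe/Aharoni--Lindenstrauss mechanism, and the $c_0$-norm (not $\ell_2$) is what makes the modulus-of-continuity estimates for the glued map close up. Your $\ell_2$-sums would put the error blocks under an $\ell_2$-norm, which the Mazur-map estimates do not control, and your ``equi-uniform family of homeomorphisms $X_r\to X$'' is not something Ribe's theorem delivers directly; the paper instead proves a concrete equi-uniformity statement (Lemmas on the path $V_\tau$ and $h_\tau^j$) tailored to the $c_0$-sum with a common dense $D$.

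On the rigidity side, the step you defer (``a distinguishing isomorphic fingerprint... that survives $\ell_2$-sums with infinite multiplicity'') is exactly where the argument can fail: with your absorption trick $X_r^{(\infty)}$, distinct $r$'s could a priori produce isomorphic $Y_{\vec x}$ unless you prove a specific embedding obstruction. The paper's obstruction is sharp and elementary: if $q\notin A$ then $\ell_q$ does not embed into $(\sum_{p\in A}\oplus\ell_p)_{c_0}$, shown by a gliding-hump dichotomy (either the embedding factors through a finite initial sum, or you extract a disjointly supported sequence equivalent to the $c_0$-basis). Note also that the paper does \emph{not} need your absorption trick at all, because the class is parametrized directly by countable subsets $A\subseteq(2,3)\setminus D$, and the $c_0$-sum over $A$ already depends only on $A$ as a set; the Borel bijection $(2,3)\setminus D\leftrightarrow\R$ then gives the reduction from $=^+$. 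So the gap is concrete: you need the fixed dense base $D$ and $c_0$-sums to make the uniform homeomorphism direction go through, and you need the $\ell_q$-non-embedding lemma rather than a vague ``fingerprint'' to make the rigidity direction go through.
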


The rest of the paper is organized as follows. In
Section~\ref{sec:Pre} we define all benchmark equivalence relations
relevant to our discussions in this paper and review the Borel
reducibility theory of equivalence relations on Polish spaces. In
Section~\ref{sec:PreB} we explain how to apply the framework of the
descriptive set theory of equivalence relations to the uniform
classification of separable Banach spaces. We also define the notion
of local equivalence and show that it is ${\bf\Sigma}^0_3$ in two
different codings of separable Banach spaces. In
Section~\ref{sec:cons} we give the construction of the family in
Theorem~\ref{thm:main1} and prove some basic properties. In
Section~\ref{sec:Ribe} we give the proofs of both
Theorems~\ref{thm:main1} and \ref{thm:main2}. We also generalize the
$\ell_\infty$ equivalence relation and prove a dichotomy theorem
characterizing its possible complexity. In Section~\ref{sec:class}
the construction of Section~\ref{sec:cons} is generalized and the
possible complexity of the uniform homeomorphism relations for the
resulting classes is completely determined. Finally in
Section~\ref{sec:noniso} we prove Theorem~\ref{thm:main3}.

{\it Acknowledgment.} We would like to thank Christian Rosendal for his careful 
reading of earlier versions of this paper and for making suggestions that improved 
the manuscript significantly. We also thank Pandelis Dodos and Greg Hjorth
 for useful discussions on the topics of this paper.

\section{\label{sec:Pre}Preliminaries on the Borel reducibility hierarchy}

In this section we review the Borel reducibility hierarchy of
analytic equivalence relations for the convenience of the reader. We
give the definitions of all equivalence relations mentioned in
Figure~\ref{fig:ers} and recall their characteristic properties. The
reader can find more details in the references provided below, or
see \cite{Ga}.

The descriptive set theory studies definable sets and relations on
Polish spaces. Recall that a {\it Polish} space is a separable and
completely metrizable topological space. Examples of Polish spaces
include $\omega={\mathbb N}$ with the discrete topology, ${\mathbb
R}$ with the usual topology, ${\mathbb R}^\omega$ with the product
topology, and the Cantor space $2^\omega=\{0,1\}^\omega$. The
simplest examples of definable subsets of a Polish space are the
Borel sets. Recall that the collection of all {\it Borel} sets is
the smallest $\sigma$-algebra containing all open sets. Thus all
Borel subsets of a Polish space can also be arranged in a hierarchy
according to their {\it descriptive complexity}. In this hierarchy
the simplest ones are open sets and closed sets. On the next level
we have the $F_\sigma$ sets and $G_\delta$ sets, which are
respectively countable unions of closed sets and countable
intersections of open sets. To continue, we call a set
${\bf\Sigma}^0_3$ if it is a countable union of $G_\delta$ sets, and
${\bf\Pi}^0_3$ if it is a countable intersection of $F_\sigma$ sets.
In general, one can define the classes ${\bf\Sigma}^0_\alpha$,
${\bf\Pi}^0_\alpha$ in the same fashion for all countable ordinals
$\alpha$. However, in this paper we will not deal with any set
beyond ${\bf\Sigma}^0_3$ and ${\bf\Pi}^0_3$.

It is well-known that ${\bf\Sigma}^0_3$ and ${\bf\Pi}^0_3$ are
distinct classes. To prove that a given subset $A$ of a Polish space
$X$ is not $\bs^0_3$, the usual method is to try and show that $A$
is {\it $\bp^0_3$-hard}, that is, given any $\bp^0_3$ subset $B$ of
$2^\omega$, there is a continuous function $f: 2^\omega\to X$ such
that $B=f^{-1}(A)$. If $A$ is $\bp^0_3$ and $\bp^0_3$-hard then it
is said to be {\it $\bp^0_3$-complete}. For more on this topic see
Section 22, \cite{Ke}.

The Borel structure thus given by a Polish topology is known as a
{\it standard Borel structure}. A Borel space (that is, a space with
a distinguished $\sigma$-algebra of subsets) is called a {\it
standard Borel space} if its Borel structure is standard, that is,
induced by an underlying Polish topology. A function $f$ between
Polish spaces (or standard Borel spaces) is {\it Borel} if
$f^{-1}(A)$ is Borel for any Borel set $A$. Any two uncountable
standard Borel space are Borel isomorphic to each other.

Other than the examples of Polish spaces mentioned above, we recall
another well-known example of a standard Borel space, the Effros
Borel space. Let $X$ be a Polish space and $F(X)$ be the hyperspace
of all closed subsets of $X$. The {\it Effros Borel structure} is
the Borel structure on $F(X)$ generated by basic Borel sets of the
form
$$ \{ F\in F(X)\,:\, F\cap U\neq \emptyset\} $$
for some open subset $U$ of $X$. A Polish topology generating the
Effros Borel structure was discovered by Beer \cite{Be}. We also
recall its definition. Let $d$ be a compatible complete metric on
$X$. For any $x\in X$ and $F\in F(X)$, define
$$ d(x,F)=\inf\{d(x,y)\,:\, y\in F\}. $$
Now consider the topology generated by all subbasic open sets of the form
$$ \{ F\in F(X)\,:\, d(x, F)<a\} \mbox{ or } \{ F\in F(X)\,:\, d(x, F)>a\} $$
for some $x\in X$ and $a\in{\mathbb R}$. This topology is known as
the Wijsman topology on $F(X)$; Hess \cite{He} observed that it
generates the Effros Borel structure, and Beer \cite{Be} proved that
it is Polish.

The next level of definable subsets beyond Borel subsets of a Polish
space consists of analytic ones. Recall that a subset of a Polish
space is {\it analytic} (or $\bs^1_1$) if it is the continuous image
of a Polish space. It is well-known that every Borel set is
analytic. For more information on Polish spaces, Borel and analytic
subsets, and Borel functions, see \cite{Ke}.

Let $X$ be a Polish space and $E$ an equivalence relation on $X$. We
say that $E$ is {\it analytic} if $E$ is an analytic subset of
$X\times X$. Similarly we also speak of {\it Borel} equivalence
relations, or even $F_\sigma$, $G_\delta$, ${\bf\Sigma}^0_3$,
${\bf\Pi}^0_3$ equivalence relations respectively.

The notion of Borel reducibility defined below is fundamental in the
theory of equivalence relations as it explores the relative {\it
structural complexity} of equivalence relations. Let $X, Y$ be
Polish spaces and $E, F$ be equivalence relations on $X, Y$,
respectively. We say that $E$ is \emph{Borel reducible} to $F$, and
denote it by $E\leq_B F$, if there is a Borel function $f:X\to Y$
such that for all $x_1, x_2\in X$,
$$x_1Ex_2 \iff f(x_1)Ff(x_2).$$
If $E\leq_B F$, then intuitively $E$ is \emph{no more complex than}
$F$, since any complete invariants for the $F$-equivalence classes
can be composed with $f$ to obtain complete invariants for the
$E$-equivalence classes. In the case of both $E\le_B F$ and $F\le_B
E$, then we denote $E\sim_B F$ and say that $E$ and $F$ are {\it
Borel bireducible}. If $E\sim_B F$, then intuitively $E$ and $F$
have {\it the same complexity}.

Next we define the benchmark equivalence relations in Figure~\ref{fig:ers}.

\begin{itemize}
\item[(1)] The equivalence relation $\id(2^\omega)$ is the identity
(or equality) relation on the Cantor space $2^\omega$, that is,
$(x,y)\in \id(2^\omega)$ iff $x=y$. Since all uncountable standard
Borel spaces are Borel isomorphic to each other, this relation is
Borel bireducible with the identity relation on any
uncountable Polish (or standard Borel) space. An equivalence relation
that is Borel reducible to $\id(2^\omega)$ is said to be {\it
smooth} or {\it concretely classifiable}, since it is possible to
assign a concrete real number as a complete invariant for each of
its equivalence classes.
\item[(2)] The equivalence relation $E_0$ is the eventual agreement
relation on $2^{\omega}$. In symbols, for $x=(x_n),y=(y_n)\in 2^\omega$,
$$ xE_0y \iff \exists m\in\omega\ \forall n\ge m\ x_n=y_n.$$
In the Borel reducibility hierarchy for Borel equivalence relations
$E_0$ is the minimum one beyond $\id(2^\omega)$ \cite{HKL}.

\item[(3)] The equivalence relation $E_1$ is the eventual agreement
relation for countable sequences of real numbers. In symbols, for
$\vec{x}=(x_n), \vec{y}=(y_n)\in \R^{\omega}$,
$$ \vec{x}E_1\vec{y} \iff \exists m\in\omega\ \forall n\ge m\ x_n=y_n.$$
It is easy to see that $E_0\le_B E_1$. In the definition of $E_1$
the space $\R$ can be replaced by the Cantor space $2^\omega$ or the Baire space $\omega^\omega$
without affecting the complexity of the resulting equivalence
relation, since $\R$ is Borel isomorphic to any uncountable Polish space. We will
use the alternate versions of the definition in this paper without further
elaborations. In the Borel reducibility hierarchy $E_1$ is an immediate successor of $E_0$ (\cite{KL}), that is, if $E\leq_B E_1$
then $E$ is Borel bireducible with either $E_1$ or $E_0$, or else $E$ is smooth.

\item[(4)] For $1\leq p\leq \infty$ the equivalence relation
$E_{\ell_p}$ is defined on $\R^\omega$ as follows: for
$\vec{x}=(x_n), \vec{y}=(y_n)\in \real^{\omega}$,
$$\vec{x}E_{\ell_p}\vec{y} \iff \vec{x}-\vec{y}\in \ell_p.$$
When there is no danger of confusion we simply use $\ell_p$ to
denote the equivalence relation $E_{\ell_p}$. Dougherty and Hjorth
\cite{DH} showed that for $1\leq p\leq q<\infty$,
$\ell_p\leq_B\ell_q$. The first author \cite{Gao} extended this to
include the case $q=\infty$. It is also known that $E_1\leq_B
\ell_\infty$ \cite{Gao} and $E_1\nleq_B \ell_p$ for $p<\infty$
\cite{KL}.

The equivalence relation $\ell_\infty$ is perhaps the most important
equivalence relation for this paper. Rosendal \cite{Ro} showed that
it is a complete $K_\sigma$ equivalence relation, that is, for any
equivalence relation $E$ on a Polish space $X$, if $E$
is $K_\sigma$ (that is, a countable union of compact subsets of $X^2$), then
$E\leq_B\ell_\infty$. In particular, if every $E$-equivalence class
is countable, then $E\leq_B \ell_\infty$.

\item[(5)] An equivalence relation is called {\it countable}
if every of its equivalence classes is countable. Among all
countable Borel equivalence relations there exists a maximum one in
terms of Borel reducibility \cite{DJK}. We denote any such
equivalence relation by $E_\infty$. By the remark above,
$E_\infty\leq \ell_\infty$. An equivalence relation $E$ is {\it
essentially countable} if $E\leq_B E_\infty$.

\item[(6)] The equivalence relation $=^+$ codes the equality
relation for countable sets of real numbers. In symbols, for
$\vec{x}=(x_n), \vec{y}=(y_n)\in \R^{\omega}$,
$$\vec{x}=^+ \vec{y}\iff \{x_n\,:\, n\in\omega\}=\{y_n\,:\, n\in\omega\}.$$
It is an easy consequence of a classical theorem of Luzin and
Novikov (see Theorem 18.10, \cite{Ke}) in descriptive set theory
that $E_\infty\leq_B\ =^+$. It is also known that
$\ell_{\infty}\nleq_B\ =^+$ (by results of Kechris and Louveau
\cite{KL}) and $=^+\,\nleq_B \ell_\infty$ (see below).

\item[(7)] The equivalence relation $E_0^\omega$ is defined on
$(2^\omega)^\omega$ as follows: for $\vec{x}=(x_n), \vec{y}=(y_n)\in
(2^\omega)^\omega$,
$$ \vec{x}E_0^\omega\vec{y}\iff \forall n\in\omega\ x_nE_0y_n. $$
$E_0^\omega$ has been studied explicitly or implicitly in, for example, \cite{So},
\cite{HK}, and \cite{HjKL}. Note that it is a ${\bf\Pi}^0_3$
equivalence relation. Results of Solecki \cite{So} imply that
$E_0^\omega$ is not essentially countable, that is,
$E_0^\omega\nleq_B E_\infty$. Further results of Hjorth, Kechris,
and Louveau \cite{HK} \cite{HjKL} imply that $E_0^\omega$ is not
Borel reducible to any ${\bf\Sigma}^0_3$ equivalence relations. Thus
in particular $E_0^\omega\nleq_B\ell_\infty$. It is a somewhat
elusive task to trace the references for this result; for the
convenience of the reader we will give a direct proof of it later in
this section.

The importance of $E_0^\omega$ lies in both the fact that it is
combinatorially easy to analyze and the speculation that it seems to
be the simplest (or even minimum) equivalence relation not reducible
to $\ell_\infty$. For instance, it is relatively simple to show that
$E_0^\omega\leq_B\ =^+$ (we will give a proof later in this
section); therefore it follows immediately that $=^+\,\nleq_B
\ell_\infty$. Also, when we need to consider equivalence relations
which seem to be more complex than $\ell_\infty$, the reducibility
of $E_0^\omega$ to them gives good test questions.

\item[(8)] The equivalence relation $E_G^\infty$ is the universal
orbit equivalence relation induced by Borel actions of Polish
groups. We shall not deal with general orbit equivalence relations
in this paper. Therefore we will omit the details of the definition
of $E_G^\infty$. The interested reader can find more information in
\cite{BK}, \cite{GK}, or \cite{Me}.

\item[(9)] Among all analytic equivalence relations on Polish spaces
there is a universal one, that is, every other analytic equivalence
relation is Borel reducible to it. Following \cite{LR} we denote it
by $E_{{\bf\Sigma}_1^1}$. As we mentioned in the Introduction this
equivalence relation plays an important role when natural
equivalence relations between separable Banach spaces are
considered. However, results in this paper only involve equivalence
relations much less complex than $E_{{\bf\Sigma}^1_1}$.
\end{itemize}

In the rest of the paper we will consider more equivalence
relations, but most of them will be closely related to
$\ell_\infty$.

In the remainder of this section we give some proofs of facts
related to $E_0^\omega$ (see (7) above) for the convenience of the
reader. We fix some notation to be used in these proofs, as well as
in the rest of the paper. First, we fix once and for all a
computable bijection
$\langle\cdot,\cdot\rangle\,:\,\omega\times\omega\,\to\,\omega$.
Next, we let $\omega^{<\omega}$ denote the countable set of all
finite sequences of natural numbers. For $s\in \omega^{<\omega}$ let
$|s|$ denote the length of $s$, that is, if $s=(s_1,\dots, s_n)$
then $|s|=n$. The empty sequence is denoted $\emptyset$, and we set
$|\emptyset|=0$. If $s=(s_1,\dots, s_n), t=(t_1,\dots, t_m)\in
\omega^{<\omega}$, then we let
%$$ s^\smallfrown t=(s_1,\dots, s_n, t_1,\dots, t_m). $$
%In particular, $|s^\smallfrown t|=|s|+|t|$. For $s, t$ as above we also let
$$ s*t=\left\{\begin{array}{ll} (s_1,\dots, s_n, t_{n+1},
\dots, t_m), & \mbox{ if $m>n$,} \\ s, & \mbox{ if $m\leq
n$.}\end{array}\right. $$ Then $|s*t|=\max\{|s|,|t|\}$, and $s*t$ is
obtained by replacing the first $|s|$ many elements of $t$ by $s$.
%These definitions also make
This definition also makes sense when $t\in\omega^{<\omega}$ is
replaced by an element of $\omega^\omega$.
%Furthermore, for $s, t$ as above we let
For $s, t$ as above we also let
$$ s\oplus t=(s_1, t_1, s_2, t_2, \dots). $$
Then $|s\oplus t|=|s|+|t|$. This definition makes sense when both
$s$ and $t$ are replaced by elements of $\omega^\omega$. For $x,
y\in \omega^\omega$, $x\oplus y$ is obtained from shuffling the
elements of $x$ and $y$ into a single sequence.

\begin{lem}\label{lem:E0w=+}
$E_0^\omega \leq_B\ =^+$.
\end{lem}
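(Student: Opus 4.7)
The plan is to encode each $\vec{x}=(x_n)\in(2^\omega)^\omega$ as a countable subset of $2^\omega$ that records, for every coordinate $n$, the entire $E_0$-class of $x_n$ tagged with the index $n$ so that the coordinates cannot collapse. Since each $E_0$-class is countable and is uniformly Borel-enumerable (via $s\mapsto s\ast x_n$ for $s\in\omega^{<\omega}$, using the $\ast$ operation introduced just before the lemma), the resulting data is a countable set of reals that determines $\vec{x}$ up to $E_0^\omega$, and this will give a Borel reduction into $=^+$ (working on $2^\omega$ in place of $\R$, which is harmless by Borel isomorphism of uncountable Polish spaces).

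Concretely, fix a continuous injection $\phi\colon \omega\times 2^\omega\to 2^\omega$, for instance $\phi(n,z)=0^n1z$, and fix an enumeration $(s_k)_{k\in\omega}$ of $2^{<\omega}$ (thought of as $\{0,1\}$-sequences). Define $f\colon (2^\omega)^\omega\to(2^\omega)^\omega$ by
$$
f(\vec{x})_{\langle n,k\rangle}\;=\;\phi\bigl(n,\,s_k\ast x_n\bigr).
$$
This map is continuous, because for fixed $n,k$ the string $s_k\ast x_n$ depends continuously on $x_n$ and $\phi$ is continuous. Its range, viewed as a subset of $2^\omega$, is exactly
$$
\bigl\{\phi(n,z)\,:\,n\in\omega,\ z\in [x_n]_{E_0}\bigr\}.
$$

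It remains to verify that $f$ is a reduction. If $\vec{x}\,E_0^\omega\,\vec{y}$, then $[x_n]_{E_0}=[y_n]_{E_0}$ for every $n$, so $f(\vec{x})$ and $f(\vec{y})$ enumerate the same subset of $2^\omega$ and hence $f(\vec{x})=^+ f(\vec{y})$. Conversely, if these two sets are equal, apply $\phi^{-1}$ (which is well defined on the common image because $\phi$ is injective) to recover equality of $\{n\}\times[x_n]_{E_0}$ and $\{n\}\times[y_n]_{E_0}$ for every $n$; intersecting with the slice at coordinate $n$ then yields $[x_n]_{E_0}=[y_n]_{E_0}$, i.e. $x_n\,E_0\,y_n$, so $\vec{x}\,E_0^\omega\,\vec{y}$.

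There is no real obstacle here: the construction is essentially forced by the desire to separate the contribution of each coordinate, and all remaining checks are routine. The only place where one has to be mildly careful is the coordinate tagging via $\phi$, whose injectivity is exactly what prevents the sets $\{\phi(n,z):z\in [x_n]_{E_0}\}$ from merging across different $n$'s in the union.
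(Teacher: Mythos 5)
Your proof is correct and takes essentially the same approach as the paper: you enumerate each coordinate's $E_0$-class via the $\ast$ operation and tag it with the coordinate index so that distinct coordinates' data cannot merge. The only cosmetic difference is the tagging device — you use the explicit pairing $\phi(n,z)=0^n1z$, while the paper tags via $z_i\oplus(\cdot)$ for a fixed sequence $(z_i)$ of pairwise $E_0$-inequivalent reals; both serve exactly the same purpose.
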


\begin{proof} Let $s^0, s^1, \dots$ be an enumeration
of $\omega^{<\omega}$. Fix some $\vec{z}=(z_n)\in (2^\omega)^\omega$
such that for all $i\neq j\in\omega$, $(z_i,z_j)\not\in E_0$. For
$\vec{x}=(x_n)\in (2^\omega)^\omega$, let $f(x)=(y_n)$, where for
$n=\langle i,j \rangle$,
$$ y_n= z_i\oplus (s^j*x_i). $$
It is easy to verify that $f$ is a Borel (in fact continuous)
reduction from $E_0^\omega$ to $=^+$.
\end{proof}

\begin{lem} \label{Ew}
$E_0^\omega$ is not Borel reducible to any $\bs^0_3$ equivalence
relation.
\end{lem}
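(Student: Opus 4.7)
My plan is to argue by contradiction. Suppose $f : (2^\omega)^\omega \to Y$ is a Borel reduction of $E_0^\omega$ to a $\bs^0_3$ equivalence relation $F$ on a Polish space $Y$. Writing $F = \bigcup_n F_n$ with each $F_n$ a $G_\delta$ set in $Y\times Y$, and setting $R_n := (f\times f)^{-1}(F_n)$, I get Borel sets $R_n \subseteq E_0^\omega$ with $\bigcup_n R_n = E_0^\omega$. By a standard topology refinement, replace the product topology on $X := (2^\omega)^\omega$ by a finer Polish topology $\tau$ (with the same Borel structure) in which $f$ becomes continuous; each $R_n$ is then $\bp^0_2$ in $(X,\tau)^2$, so $E_0^\omega$ is displayed as a $\bs^0_3$ subset of $(X,\tau)^2$.

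The guiding intuition is that in the usual product topology, $E_0^\omega$ is a $\bp^0_3$-complete subset of $((2^\omega)^\omega)^2$: given any $\bp^0_3$ set $B = \bigcap_n A_n$ in $2^\omega$ with $A_n$ being $F_\sigma$, pick continuous reductions $h_n : 2^\omega \to 2^\omega \times 2^\omega$ with $A_n = h_n^{-1}(E_0)$ (using that $E_0$ is $F_\sigma$-complete), and assemble the pair sequences coordinatewise into a continuous Wadge reduction from $B$ to $E_0^\omega$. In particular $E_0^\omega$ is not $\bs^0_3$ in its native topology. The catch is that the refined topology $\tau$ could in principle simplify $E_0^\omega$, so one cannot directly compare and must work inside $\tau$.

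The heart of the proof, and the main obstacle, is to derive the contradiction inside $(X,\tau)^2$. I would further refine $\tau$ (via a Becker--Kechris type change of topology) so that the translation action of $H^\omega$ on $X$, whose orbits are exactly the $E_0^\omega$-classes, remains continuous. A Kuratowski--Ulam argument applied to $\bigcup_n R_n = E_0^\omega$ then pins down a single index $n$ and a $\tau\times\tau$-basic rectangle $U\times V$ on which $R_n$ is comeager relative to $E_0^\omega$. I would then carry out a fusion argument, building sequences $(\vec{x}^k),(\vec{y}^k)$ converging in $\tau$ to $\vec{x}^*,\vec{y}^*$ with $(\vec{x}^*,\vec{y}^*)\in R_n$ (using the $G_\delta$-presentation of $R_n$ as a countable intersection of open conditions) while independently perturbing the $0$th coordinate of the approximations so that $x^*_0\not E_0 y^*_0$, forcing $\vec{x}^*\not E_0^\omega \vec{y}^*$ and hence a contradiction. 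The subtle point is that $R_n\subseteq E_0^\omega$, so blind limits stay inside $E_0^\omega$; one must exploit the product structure of $\tau\times\tau$ and the density of non-$E_0$-classes in the $0$th coordinate to escape the orbit while honoring the countably many open conditions that cut out $R_n$.
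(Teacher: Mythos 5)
Your plan takes a genuinely different route from the paper's. The paper keeps the original product topology on $(2^\omega)^\omega$, restricts the Borel reduction $f$ to a comeager $G_\delta$ set $C$ where $f$ is continuous, concludes that $E_0^\omega\cap(C\times C)$ would be $\bs^0_3$, and then proves directly that $E_0^\omega\cap(C\times C)$ is $\bp^0_3$-complete for every comeager $C$ --- via a concrete fusion construction of two maps $\pi_1,\pi_2\colon 2^\omega\to 2^{\omega\times\omega}$ that simultaneously reduce a $\bp^0_3$-complete set to $E_0^\omega$ and meet the dense open sets cutting out $C$. Your plan instead refines the topology to make $f$ continuous outright and then attempts a direct contradiction inside the refined topology via a Becker--Kechris change of topology for the group action, Kuratowski--Ulam, and a fusion that leaves the orbit. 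Both strategies run into the same structural difficulty, which you correctly name: the restricted/refined copy of $E_0^\omega$ might a priori be simpler than $E_0^\omega$ itself. The paper resolves this by constructing the Wadge reduction \emph{into} the comeager set, so the hardness survives the restriction.

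However, your proposal has a genuine gap at exactly the step you flag as the "heart" and "main obstacle," and I do not think it is merely a matter of writing out routine details. First, the Kuratowski--Ulam step is stated imprecisely: $E_0^\omega$ is meager in $X\times X$, so "\,$R_n$ is comeager relative to $E_0^\omega$ on a rectangle $U\times V$\," has no standard meaning; one would need to reparametrize $E_0^\omega$ as (roughly) $X\times G$ via the orbit map for the Polish group $G=\big(\bigoplus_\omega\mathbb Z/2\big)^\omega$ and do category in that product, and the localization one gets is then a box $U\times W$ with $W\subseteq G$, not a rectangle in $X\times X$. Second, and more seriously, the concluding fusion must build a pair $(\vec{x}^*,\vec{y}^*)\in R_n$ with $\vec{x}^*\not E_0^\omega\ \vec{y}^*$, but $R_n\subseteq E_0^\omega$ by construction, so the only way to "escape" is if the finitely-many open conditions approximating $R_n$ at each stage leave enough slack to perturb coordinate $0$ out of the orbit while the remaining conditions still close up to membership in $R_n$. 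You invoke "the density of non-$E_0$-classes in the $0$th coordinate," but that is a statement about the \emph{original} topology; in the refined topology $\tau$ (which was built to make both $f$ and the group action continuous) the relevant translates need not be $\tau$-dense, and nothing in the proposal controls this. This is precisely the place where the Hjorth--Kechris--Louveau machinery the paper cites does substantial technical work; without supplying an argument of comparable strength, the proof is incomplete. The paper's approach avoids this entirely by never changing the topology: it needs only that the dense open sets defining $C$ can be met along the fusion, which is automatic since they are dense open in the original product topology.
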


\begin{proof}
Suppose $X$ is a Polish space, $E$ a $\bs^0_3$ equivalence relation
on $X$, and $f\colon (2^\omega)^\omega \to X$ a Borel function such
that for all $\vec{x},\vec{y} \in (2^\omega)^\omega$,
$$ \vec{x}\, E_0^\omega \, \vec{y} \iff f(x) \, E \, f(y).$$
Since $f$ is Borel, and hence Baire measurable,
there is a comeager set $C \subseteq (2^\omega)^\omega$ such that
$f \res C$ is continuous.
We may assume $C$ is a $G_\delta$ set. We may now compute
$E_0^\omega \cap( C \times C)$ to be $\bs^0_3$, namely,
$$(x,y) \in E_0^\omega \cap (C \times C) \iff x,y \in C
\ \wedge\  (f(x),f(y)) \in E.$$
Since $f \res C$ is continuous and $E \in \bs^0_3$,
this shows $E_0^\omega \cap (C \times C)$ to be $\bs^0_3$.
To get a contradiction
it thus suffices to prove the following claim.

\begin{claim}
For every comeager set $C \subseteq (2^\omega)^\omega$,
$E_0^\omega \cap (C \times C)$ is $\bp^0_3$-complete.
\end{claim}
\begin{proof}
Let $B=\{ x \in 2^{\omega}\, \colon\,\forall i\in\omega\ \exists
j\in\omega\ \forall k \geq j \ x(\langle i,k \rangle) =1\,\}.$ Then
$B$ is clearly $\bp^0_3$. We first show that $B$ is
$\bp^0_3$-complete. For this, let $A \subseteq 2^\omega$ be
$\bp^0_3$, say $A=\bigcap_i \bigcup_j \bigcap_l A_{i,j,k}$, where
each $A_{i,j,k}$ is clopen.
 Define $\rho \colon 2^\omega \to 2^\omega$
as follows. For each $i,k \in \omega$ let $a_{x,i,k} \in \omega$
be the least integer $j \leq k$, if one exists, such that
$x \in A_{i,j,k'}$ for all $k' \leq k$. Let $\rho(x)(i,k)=1$
iff $a_{x,i,k}$ and $a_{x,i,k-1}$ are both defined and are equal.
Otherwise set $\rho(x)(i,k)=0$. The map $\rho$ is continuous
from $2^\omega$ to $2^{\omega}$,
and $x \in A$ iff $\rho(x) \in B$. Thus, $B$ is $\bp^0_3$-complete.

Note that $(2^\omega)^\omega$ is homeomorphic to
$2^{\omega\times\omega}$. For notational simplicity we work with
$2^{\omega\times\omega}$ below, and identify it with
$(2^\omega)^\omega$. If $s\in 2^{n\times m}$ for some
$n,m\in\omega$, then the basic clopen set determined by $s$, denoted
by $N_s$, is the set $\{x \in 2^{\omega\times\omega}\,:\, \forall
i<n, j<m\ x(i,j)=s(i,j)\}$. Write $C= \bigcap_n D_n$ where each
$D_n$ is open dense in $2^{\omega\times\omega}$.

We next define two continuous functions $\pi_1,\pi_2 \colon
2^{\omega} \to 2^{\omega \times \omega}$ so that $$x\in B \iff
(\pi_1(x),\pi_2(x))\in E_0^\omega \res (C \times C).$$ For each
sequence $s \in 2^n$ we will define  values $\pi_1(s), \pi_2(s) \in
2^{p(n) \times p(n)}$ for some $p=p(n)$ which depends only on $n$.
We will then take, for $x \in 2^\omega$, $\pi_1(x)=\bigcup_{n}
\pi_1(x\!\res\!n)$ and likewise for $\pi_2(x)$.

Suppose inductively that for some $n \in \omega$ and every sequence
$s \in 2^n$ we have defined $\pi_1(s), \pi_2(s) \in 2^{p \times p}$
for some $p=p(n) \in \omega$ which depends only on $n$. Suppose also
that $N_{\pi_1(s)}, N_{\pi_2(s)} \subseteq D_n$ for each $s \in
2^n$. Let $n+1=\langle i,k \rangle$. For each $s' \in 2^{n+1}$
extending $s \in 2^n$, extend $t_1:=\pi_1(s)$ and $t_2:=\pi_2(s)$ to
$t'_1$, $t'_2$ by letting $t'_1(i,p(n)+k)=t'_2(i,p(n)+k)=1$ if
$s(n)=1$, and otherwise setting $t'_1(i,p(n)+k)=0$,
$t'_2(i,p(n)+k)=1$. Extend $t'_1, t'_2$ to $t''_1$, $t''_2$ in
$2^{q_n \times q_n}$, where $q_n=p(n)+n$,
 by setting all other undefined values to $0$. Note that all of the
$t''_1$, $t''_2$  are elements of $2^{q_n \times q_n}$. Let $p(n+1)$
be large enough so that there is a finite function $h_{n+1} \colon
(p(n+1) \times p(n+1)) -(q_n \times q_n) \to \{ 0,1\}$ such that for
all of the $t''_1$, $t''_2$  we have that $u_1=t''_1 \cup h_{n+1}$
and  $u_2=t''_2 \cup h_{n+1}$ determine  basic open sets with $N_u
\subseteq D_{n+1}$. We can achieve this in $2 \cdot 2^{n+1}$ steps,
using the fact that $D_{n+1}$ is dense open. Set $\pi_1(s')=u_1$,
$\pi_2(s')=u_2$. Note that for any $s_1, s_2 \in 2^{n+1}$ and
$a,b\in\{1,2\}$, $\pi_a(s_1)$, $\pi_b(s_2)$ differ in at most one
point of  $(p(n+1) \times p(n+1)) -(p(n) \times p(n))$.

Clearly $\pi_1$, $\pi_2$ are continuous and $\pi_1(x)$, $\pi_2(x)
\in C$ for any $x \in 2^\omega$. If $x \in B$, then for each $i$ let
$k(i)$ be such that $x(\langle i,k\rangle)=1$ for all $k \geq k(i)$.
Fix $i \in \omega$. For any $n \geq \langle i, k(i) \rangle$, if
$n=\langle i,j\rangle$ for some $j$ then $\pi_1(x \!\res\! n)$,
$\pi_2(x \!\res\! n)$ are extended identically in going to $\pi_1(x
\!\res\! n+1)$ and $\pi_2(x \!\res\! n+1)$ (namely, they have value
$1$ at $(i, p(n)+j)$ and $0$ at the other new points of the domain).
If $n=\langle i',j\rangle$ where $i'\neq i$, then we still have that
$\pi_1(x \!\res\! n)$, $\pi_2(x \!\res\! n)$ are extended
identically on point of the form $(i,k)$ (they both are $0$ there).
So, $\pi_1(x)$, $\pi_2(x)$ agree at coordinates of the form $(i,k)$
for all large enough $k$. So, $(\pi_1(x)\,\pi_2(x))\not\in
E_0^\omega$.

Conversely, if $x \notin B$, then for some $i$, there are infinitely
many $j$ that $x(\langle i,j\rangle)=0$. Fix such an $i$. For each $j$
with $x(\langle i,j\rangle)=0$ let $n=\langle i,k \rangle$, and we
have that $\pi_1(x)(n)$ and $\pi_2(x)(n)$ disagree at $(i,
p(n)+j)$. This implies that $\neg \pi_1(x)\, E_0^\omega\, \pi_2(x)$.
\end{proof}
This completes the proof of lemma~\ref{Ew}.
\end{proof}

\section{\label{sec:PreB}Codings of separable Banach spaces and the local equivalence}

To apply the descriptive set theoretic framework to the study of
equivalence relations on separable Banach spaces, the collection of
separable Banach spaces must be viewed as a Polish space.

One way to do this is to use the well-known theorem of Banach and
Mazur that $C[0,1]$ is a universal separable Banach space, that is,
every separable Banach space is linearly isometric to a (necessarily
closed) subspace of $C[0,1]$. The collection of all separable Banach
spaces is then viewed as a subspace of the hyperspace $F(C[0,1])$
with the Wijsman topology (see Section~\ref{sec:Pre}). Let
$$ {\mathfrak B}=\{ F\in F(C[0,1])\,:\, \mbox{$F$ is a
linear subspace of $C[0,1]$}\,\}.$$ We check below that ${\mathfrak
B}$ is a Polish subspace.

\begin{lem} ${\mathfrak B}$ is a $G_\delta$ subspace of $F(C[0,1])$,
hence is Polish.
\end{lem}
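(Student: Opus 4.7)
The plan is to use the Wijsman-topology description of $F(C[0,1])$ recalled earlier, whose subbasis consists of the open sets
$\{F : d(x,F) < a\}$ and $\{F : d(x,F) > a\}$ for $x \in C[0,1]$ and $a \in \R$. Since a closed set $F \subseteq C[0,1]$ is a linear subspace iff (i) $0 \in F$, (ii) $F$ is closed under addition, and (iii) $F$ is closed under scalar multiplication, I will verify that each of these three conditions defines a $G_\delta$ subset of $F(C[0,1])$ and then take the intersection.

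The condition $0\in F$ is the easiest: it is equivalent to $d(0,F)=0$, hence to $\bigcap_n \{F : d(0,F)<1/n\}$, which is $G_\delta$. For closure under addition I fix a countable dense set $D\subseteq C[0,1]$ and rewrite the condition as
\[
\forall x,y\in D\ \forall \epsilon\in\Q^+\ \bigl(d(x,F)>\epsilon\ \vee\ d(y,F)>\epsilon\ \vee\ d(x+y,F)\le 2\epsilon\bigr).
\]
The first two disjuncts are open (subbasic Wijsman), and $\{F:d(x+y,F)\le 2\epsilon\}=\bigcap_k\{F:d(x+y,F)<2\epsilon+1/k\}$ is $G_\delta$; since a union of an open set and a $G_\delta$ set is $G_\delta$ (write $U\cup\bigcap_kG_k=\bigcap_k(U\cup G_k)$), each clause is $G_\delta$, and a countable intersection of such clauses is again $G_\delta$. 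The analogous formulation for scalar multiplication, using $x\in D$, $\alpha\in\Q$, and rational $\epsilon>0$, has the form
\[
d(x,F)>\epsilon\ \vee\ d(\alpha x,F)\le (|\alpha|+1)\epsilon,
\]
and is $G_\delta$ for the same reason. Density of $\Q$ in $\R$ together with continuity of scalar multiplication lets the rational scalars control all real scalars on a closed set.

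The main content is verifying that these countable-dense formulations really capture the genuine algebraic closure conditions; this is the one place where care is needed. The forward direction is immediate. For the converse, suppose the displayed conditions hold and let $x,y\in F$; pick $x_n,y_n\in D$ with $\|x-x_n\|,\|y-y_n\|<1/n$, so $d(x_n,F),d(y_n,F)<1/n$, and the hypothesis (applied to $\epsilon=1/n$) gives $d(x_n+y_n,F)\le 2/n$; then $d(x+y,F)\le 4/n$ for every $n$, so $d(x+y,F)=0$ and hence $x+y\in F$ since $F$ is closed. An entirely parallel argument, approximating both $x\in F$ by elements of $D$ and a real scalar by rationals, handles scalar multiplication.

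Intersecting the three $G_\delta$ sets yields $\mathfrak{B}$, completing the proof that $\mathfrak{B}$ is $G_\delta$ in $F(C[0,1])$; since $F(C[0,1])$ is Polish (by Beer's theorem cited above), any $G_\delta$ subspace is Polish, so $\mathfrak{B}$ is Polish.
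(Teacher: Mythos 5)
Your proof is correct and takes essentially the same approach as the paper's: fix a countable dense $D\subseteq C[0,1]$, express the subspace condition by quantifying over $D$ and the rationals, and observe that each resulting clause is $G_\delta$ in the Wijsman topology. The only organizational difference is that the paper packages closure under addition and scalar multiplication into a single universal statement about $d(px+qy,F)<|pa|+|qb|$, whereas you split it into three conditions; your explicit inclusion of $0\in F$ is actually slightly tidier, since it correctly excludes the degenerate case $F=\emptyset$, which the paper's single implication (being vacuously true when $d(x,F)=\infty$) does not rule out on its own.
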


\begin{proof}
Fix a countable dense $D\subseteq C[0,1]$. Let $d$ be the metric on
$C[0,1]$ given by its norm. We claim that for any $F\in F(C[0,1])$,
$F\in {\mathfrak B}$ iff
$$ \forall p, q, a, b\in {\mathbb Q}\ \forall x, y\in D\ [\, d(x,F)<a
\wedge d(y, F)<b \Longrightarrow d(px+qy, F)<|pa|+|qb|\, ]. $$ If $F$
is a linear subspace of $C[0,1]$ the demonstrated condition clearly
holds. Conversely, suppose the condition holds. Since $F$ is closed,
it suffices to show that for all $u,v\in F$ and $p,q\in{\mathbb Q}$,
$pu+qv\in F$. For this take two sequences $x_n, y_n\in D$ such that
$d(x_n,u), d(y_n,v)<2^{-n}$. Then $d(px_n+qy_n, F)<(|p|+|q|)2^{-n}$ by
the assumption, and $d(px_n+qy_n, pu+qv)<(|p|+|q|)2^{-n}$. Thus
$d(pu+qv, F)<(|p|+|q|)2^{-n+1}$.  Since $n$ is arbitrary, we have that
$d(pu+qv, F)=0$ and $pu+qv\in F$. The claim implies immediately that
${\mathfrak B}$ is $G_\delta$ in the Wijsman topology of $F(C[0,1])$.
\end{proof}

In discussing Banach spaces with a distinguished Schauder basis
another representation is often used. A fundamental result of Pe\l
czy\'nski \cite{Pe} says that there is a universal basis $U=(e_i)$,
that is, for every separable Banach space with a basis $(X,B)$,
where $B=(x_i)$, there is a one-to-one map $f\colon B \to U$ which
extends to a linear isomorphism from $X$ to the space spanned by
$f(B)$. In this manner, the collection of separable Banach spaces with
a basis can be identified with the Cantor space $2^\omega$. For $x
\in 2^\omega$, let $X_x$ be the separable Banach space with a basis
coded by $x$. The space of all Banach spaces with infinite bases correspond to
$[\omega]^\omega$, the set of all infinite subsets of $2^\omega$, which is a
$G_\delta$ subset of $2^\omega$ and a Polish space in its own right.

In practice it is often easier to work with the following direct coding for Banach spaces with infinite bases.
Fix once and for all for the rest of the paper
an enumeration $(s^n)$ of $\Q^{<\omega}$, the set of all finite
sequences of rational numbers.  To any separable Banach space with a
infinite basis $(Y, (y_i))$, we associate a sequence of real numbers
$(r_n)\in \R^\omega$, where
$$ r_n=\big\|\sum_{i=1}^k a^n_i y_i\big\|_Y $$ if $s^n=(a^n_1,\dots,
a^n_k)$. Recall that a normalized (i.e., all $y_i$ have norm $1$)
basis $(y_i)$ is called {\em monotone} if the projections onto
initial segments of $(y_i)$ have norm $1$. There is no loss of
generality in restricting to monotone bases, since for every
normalized basis we can take an equivalent norm for which the basis
is monotone.
% Without loss of generality we may assume that the given basis is monotone, that is,
% all the projection maps with respect to the basis $\{ y_i\}$
% have norm $\leq 1$.
Let ${\mathfrak B}_b\subseteq \R^\omega$ be the set of all possible
sequences $(r_n)$ associated with Banach spaces with a monotone
basis.

Again we check below that ${\mathfrak B}_b$ is a Polish
space. We henceforth use the phrase ``Banach space with basis''
to denote a pair $(X, B)$, where $X$ is a Banach space, and $B$ is a
basis. It is these objects that are coded by the reals in $\mathfrak{B}_b$.

\begin{lem}\label{lem:Bb} ${\mathfrak B}_b$ is a closed  subspace of $\R^\omega$,
hence is Polish.
\end{lem}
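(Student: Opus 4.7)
The plan is to realize $\mathfrak{B}_b$ as the intersection of countably many closed subsets of $\R^\omega$, each given by an equality or non-strict inequality in finitely many of the coordinates $r_n$. Since such sets are closed in the product topology, their intersection will be closed. The coding translates each axiom of a monotone normalized basis into an elementary arithmetic condition on the $r_n$, where the indices involved are computed from the fixed enumeration $(s^n)$ of $\Q^{<\omega}$.

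Concretely, I would impose, for all $m,n,k\in\omega$ and $\lambda\in\Q$: (i) $r_n = 1$ whenever $s^n$ is a standard unit vector (normalization); (ii) $r_n\ge 0$, and $r_n=0$ whenever $s^n$ is the zero sequence; (iii) $r_k\le r_m+r_n$ whenever $s^k = s^m+s^n$ coordinatewise (subadditivity); (iv) $r_m = |\lambda|\, r_n$ whenever $s^m = \lambda s^n$ (homogeneity); and (v) $r_m\le r_n$ whenever $s^m$ agrees with an initial segment of $s^n$ and is zero beyond it (monotonicity of the partial-sum projections). Each condition involves only finitely many coordinates and is closed in $\R^\omega$, so their conjunction defines a closed set $C\subseteq\R^\omega$.

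For the forward inclusion $\mathfrak{B}_b\subseteq C$, every sequence $(r_n)$ arising from a Banach space with monotone normalized basis clearly satisfies (i)--(v) by the norm axioms and the definition of monotonicity. For the reverse inclusion, given $(r_n)\in C$, I would define $\|\cdot\|$ on the $\Q$-vector space $c_{00}(\Q)$ by declaring $\|\sum_{i=1}^k a_i e_i\| := r_n$ whenever $s^n=(a_1,\dots,a_k)$. Conditions (ii)--(iv) make this a seminorm. To upgrade it to a norm, use (i) and (v): if $x=\sum a_i e_i$ with $a_j\neq 0$, the monotonicity condition applied to the projections $P_j$ and $P_{j-1}$ together with normalization forces $|a_j|\le 2\|x\|$ (or $|a_1|\le\|x\|$ when $j=1$), so $\|x\|>0$. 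The bound $\|\sum a_ie_i\|\le \sum|a_i|$ coming from (iii)--(iv) lets me extend $\|\cdot\|$ by continuity to $c_{00}(\R)$, and then complete to obtain a Banach space $Y$ in which $(e_i)$ is a monotone normalized basis whose associated sequence of norms is precisely $(r_n)$.

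The main (mild) obstacle is just choosing the list (i)--(v) carefully enough that it is both closed in $\R^\omega$ and strong enough to force genuine norm behavior; in particular, one must avoid the tempting but non-closed requirement ``$r_n>0$ for $s^n\neq 0$'' and instead derive strict positivity from monotonicity plus normalization as above. Once that is arranged, the extension from $\Q$-coefficients to $\R$-coefficients and the passage to the completion are entirely standard, and the equality $\mathfrak{B}_b = C$ follows.
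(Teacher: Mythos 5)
Your proposal is correct and follows essentially the same approach as the paper: express $\mathfrak{B}_b$ as the intersection of closed conditions on finitely many coordinates of $(r_n)$ encoding normalization, the triangle inequality, homogeneity, and monotonicity of the basis, and observe that strict positivity on nonzero vectors is a consequence of normalization plus monotonicity rather than something that needs to be (and could not be, as a closed condition) imposed directly. Your extra condition $r_n\ge 0$ and $r_n=0$ for the zero sequence is already implied by homogeneity (take $\lambda=0$) and subadditivity, but including it is harmless; you also spell out the reverse-direction construction of the Banach space from a sequence in $C$ more explicitly than the paper does, which is a welcome elaboration rather than a genuine departure.
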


\begin{proof} For notational convenience in this proof we identify
$s^n=(a^n_1,\dots, a^n_k)$ with the infinite sequence $(a^n_1,\dots,
a^n_k, 0, 0, \dots)$. Then it makes sense to speak of
$s^n+s^m\in\Q^{<\omega}$ for any $n, m\in\omega$, and $ps^n\in \Q$
for any $p\in\Q$ and $n\in\omega$.  Now for any $(r_n)\in\R^\omega$,
$(r_n)\in {\mathfrak B}_b$ iff all of the following hold:
\begin{enumerate}
\item[(i)] if $s^n=(0,\dots, 0,1,0.\dots,0)$, then $r_n=1$;
\item[(ii)] if $s^m$ coincides with an initial segment of $s^n$,
then $r_m \leq r_n$.
\item[(iii)] if $s^n+s^m=s^l$, then $r_l\leq r_n+r_m$;
\item[(iv)] for any $p\in \Q$, if $s^m=ps^n$, then $r_m=|p|r_n$.
\end{enumerate}
The conditions listed are closed for $(r_n)$ in $\R^\omega$. Note
that (i) and (ii) imply that the basis is monotone which implies
that any non-zero linear combination of the $y_i$ has positive norm.
\end{proof}

Given any $(r_n)\in {\mathfrak B}_b$, by the proof of
Lemma~\ref{lem:Bb} we can associate a Banach space with an infinite basis
whose norm function is approximated by the sequence $(r_n)$. In this
manner each element of ${\mathfrak B}_b$ codes a Banach space with
basis. For $y \in {\mathfrak B}_b$, let $Y_y$ be the space coded by
$y$.

We remark that the two codings for Banach spaces with bases are
equivalent in the following precise sense.  It is easy to see that
there is a continuous function $\varphi \colon [\omega]^\omega \to
{\mathfrak B}_b$ such that for all $x \in [\omega]^\omega$, $X_x$ is
linearly isometric to $Y_{\varphi(x)}$. Conversely, by the proof of Pe\l
czy\'nski's theorem \cite{Pe} there is also a Borel function $\psi
\colon {\mathfrak B}_b \to [\omega]^\omega$ such that for all $y \in
{\mathfrak B}_b$, $Y_y$ is linearly isomorphic to $X_{\psi(y)}$.

As for the relationship between codings using elements of ${\mathfrak B}$ versus those of ${\mathfrak B}_b$, we denote by ${\mathfrak B}_\beta$ the subspace of ${\mathfrak B}$ consisting of all linear subspaces of $C[0,1]$ admitting bases. It follows immediately from the proof of
the Banach-Mazur theorem that there is a Borel function $\Phi: {\mathfrak B}_b\to {\mathfrak B}_\beta\subseteq {\mathfrak B}$ such that for all $y\in {\mathfrak B}_b$, $Y_y$ is linearly isometric
to $\Phi(y)$. Intuitively, in defining $\Phi(y)$ one omits the given basis and obtains an isomorphic (in fact isometric) copy of the space as
a subspace of $C[0,1]$. It is easy to see that ${\mathfrak B}_\beta$ coincides with the isomorphic saturation of $\Phi({\mathfrak B}_b)$, denoted $[\Phi({\mathfrak B}_b)]$,
which is also
the same as the isometric saturation of $\Phi({\mathfrak B}_b)$. Obviously both $\Phi({\mathfrak B}_b)$ and ${\mathfrak B}_\beta=[\Phi({\mathfrak B}_b)]$ are analytic subsets of
${\mathfrak B}$. However, it is not known whether either of them is Borel.

Rosendal has pointed out that the function $\Phi$ can be improved to be injective, that is, there is a Borel {\it injective} function $\Psi$ with all the above properties.
To see this, fix $\lambda : {\mathfrak B}_b\to [0,1]$ a Borel injection and $\varphi: C[0,1]\oplus_\infty C[0,1]\to C[0,1]$ a linear isometric embedding. For any $y\in {\mathfrak B}_b$, let
$$\Psi(y)=\left\{\, \varphi(v,\lambda(y)v)\in C[0,1]\,:\, v\in \Phi(y)\subseteq C[0,1]\,\right\}. $$
Then $\Psi(y)$ and $\Phi(y)$ are linearly isometric, and $\Psi$ is obviously injective
because of the injectivity of $\lambda$ and $\varphi$. It follows that $\Psi({\mathfrak B}_b)$ is Borel. Note that ${\mathfrak B}_\beta=[\Psi({\mathfrak B}_b)]=[\Phi({\mathfrak B}_b)]$ and the question about its
Borelness remains unresolved.

Next we turn to equivalence relations between separable Banach spaces.

We remark first that the uniform homeomorphism relation is analytic as
an equivalence relation on either ${\mathfrak B}$ or ${\mathfrak
B}_b$.  This was noted in \cite{FLR}, and in fact it it proved there
that the uniform homeomorphism relation on all Polish metric spaces is
complete analytic.  For the convenience of the reader we recall the
following argument. Let $\approx$ denote the uniform homeomorphism
relation on ${\mathfrak B}$.  Then for $X, Y\in {\mathfrak B}$,
$X\approx Y$ iff there exist $(x_n), (y_n)\in C[0,1]^\omega$ such that
\begin{enumerate}
\item[(a)] $x_n\in X$ and $y_n\in Y$ for all $n\in\omega$;
\item[(b)] the sets $D_X:=\{x_n\,:\, n\in\omega\}$ and
$D_Y:=\{y_n\,:\, n\in\omega\}$ are dense in $X$ and $Y$ respectively;
\item[(c)] the map $f: D_X\to D_Y$ with $f(x_n)=y_n$ for all
$n\in\omega$ is a uniformly continuous bijection, with $f^{-1}$ also
uniformly continuous.
\end{enumerate}
One direction of the equivalence is clear. For the other direction,
we note that the uniform homeomorphism $f$ defined on a dense set
$D_X$ can be uniquely extended to a necessarily uniform
homeomorphism of the entire space, since Cauchy sequences in $D_X$
will correspond to Cauchy sequences in $D_Y$ by the uniform
continuity of $f$ and $f^{-1}$.  Now the conditions (a) through (c)
are all Borel conditions for $X, Y, (x_n)$, and $(y_n)$. Hence
$\approx$ is analytic. It also follows immediately that the uniform
homeomorphism relation on ${\mathfrak B}_b$ is analytic via the
pullback of the Borel function $\Phi$ defined above.

In the remainder of this section we define a notion of local
equivalence inspired by Ribe's theorem \cite{Ri1} and study its
basic properties.  In doing this we recall some concepts and results
from Banach space theory. All other unexplained terms and facts can
be found in \cite{BL} or \cite{T}.

Recall that, for linearly isomorphic Banach spaces $X$ and $Y$, the
\emph{Banach-Mazur distance} between $X$ and $Y$ is defined as
$$d(X,Y):=\inf\{\,\|T\| \|T^{-1}\|\,:\, T:X\to Y\ \mbox{is an
isomorphism}\,\}.$$ The following theorem is a fundamental result
about uniform homeomorphism.

\begin{thm}[Ribe \cite{Ri1}]\label{thm:Ribe}
If $X$ and $Y$ are uniformly homeomorphic Banach spaces, then there
exists a constant $C>0$ such that for every finite-dimensional
subspace $E$ of $X$ there exists a finite-dimensional subspace $F$
of $Y$ such that $d(E, F)\le C$, and vice versa.
\end{thm}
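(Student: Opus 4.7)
This is the celebrated theorem of Ribe \cite{Ri1}, a deep result that the paper rightly takes as a black box. The plan is to sketch the modern ultraproduct-and-differentiation approach (in the spirit of Heinrich--Mankiewicz; see Chapter~10 of \cite{BL}), since it is the cleanest. The guiding heuristic is that a uniform homeomorphism $U\colon X\to Y$ becomes increasingly linear at large scales. Concretely, I would first establish a \emph{large-scale Lipschitz estimate}: subadditivity of the modulus of uniform continuity yields a constant $M$ such that $\|U(x)-U(y)\|\le M\max(\|x-y\|,1)$, and symmetrically for $U^{-1}$. For each $t>0$, the rescaled map $U_t(x):=t^{-1}U(tx)$ is then $M$-Lipschitz on pairs with $\|x-y\|\ge 1/t$, so as $t\to\infty$ the Lipschitz defect is pushed to arbitrarily fine scales. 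Fixing a free ultrafilter $\mathcal{U}$ on $\mathbb{N}$ and forming the Banach space ultrapowers $X_{\mathcal U}$, $Y_{\mathcal U}$, the formula $\tilde U([x_n]):=[U_n(x_n)]$ defines a well-defined $M$-Lipschitz map with a Lipschitz inverse coming from $U^{-1}$, so $\tilde U$ is a bi-Lipschitz homeomorphism between the ultrapowers.

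The \emph{main obstacle} is the linearization step: one must extract from $\tilde U$ a bounded linear isomorphic embedding $T\colon X_{\mathcal U}\to Y_{\mathcal U}$. This is accomplished via a G\^ateaux differentiability theorem for Lipschitz maps (of Aronszajn--Mankiewicz--Christensen type). The delicacy is that ultrapowers are typically non-separable, so one must either invoke a differentiability theorem valid in that generality or cut down to a separable $\tilde U$-invariant subspace built around the finite-dimensional subspace of interest. At any point of differentiability the bi-Lipschitz condition on $\tilde U$ forces the derivative $T$ to be an isomorphic embedding with $\|T\|\cdot\|T^{-1}\|\le M^2=:C$.

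Finally, any finite-dimensional subspace $E\subseteq X$ sits isometrically in $X_{\mathcal U}$ via the diagonal map, so $T(E)\subseteq Y_{\mathcal U}$ is a finite-dimensional subspace with $d(E,T(E))\le C$. By the $(1+\varepsilon)$-finite representability of the ultrapower $Y_{\mathcal U}$ in $Y$ there is, for every $\varepsilon>0$, a subspace $F\subseteq Y$ with $d(T(E),F)\le 1+\varepsilon$, hence $d(E,F)\le C(1+\varepsilon)$. Absorbing $\varepsilon$ into a slightly larger constant (or invoking compactness of the Banach--Mazur compactum in fixed dimension) yields a single $F\subseteq Y$ with $d(E,F)\le 2C$. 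The reverse direction is symmetric, running the entire argument with $U^{-1}$ in place of $U$.
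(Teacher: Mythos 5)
The paper does not prove this statement: it is cited verbatim as Ribe's theorem from \cite{Ri1} and used as a black box, so there is no ``paper's own proof'' to compare against. What you have written is a sketch of the modern Heinrich--Mankiewicz ultrapower route, which is genuinely different from Ribe's original (and considerably more hands-on) argument, and your high-level outline — large-scale Lipschitz estimate, rescaling, ultrapower linearization, finite representability of the ultrapower — is the right shape.

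There is, however, a real gap in the linearization step that your remedy does not close. Gateaux differentiability almost everywhere for a Lipschitz map requires the domain to be separable \emph{and} the target to have a Radon--Nikod\'ym type property, and ultrapowers typically lack the latter just as badly as they lack separability. Cutting down to a separable $\tilde U$-invariant subspace of $X_{\mathcal U}$ fixes the domain side but does nothing about the RNP defect of the target $Y_{\mathcal U}$ (or of its separable image). The standard resolution is to settle for a weak$^*$ Gateaux derivative taking values in the bidual $(Y_{\mathcal U})^{**}$; the finite-dimensional subspace $E$ is then embedded with distortion $\le M^2$ into $(Y_{\mathcal U})^{**}$, and one concludes by composing two layers of finite representability — $(Y_{\mathcal U})^{**}$ in $Y_{\mathcal U}$ by the principle of local reflexivity, and $Y_{\mathcal U}$ in $Y$ as you noted. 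Also, the well-definedness of $\tilde U$ on the ultrapower (that a representative change by a null sequence is preserved) deserves the one-line argument via the large-scale bound rather than being left implicit, since at scale $\sim 1/n$ the rescaled maps $U_n$ are exactly at the boundary of where the Lipschitz estimate applies. With these adjustments your sketch becomes a correct account of the ultrapower proof of Ribe's theorem.
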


This motivates the following concept.

\begin{defn}
Let $X$ and $Y$ be Banach spaces. We say that $X$ and $Y$ are {\it
locally equivalent}, and denote by $X\loce Y$, if there exists a
constant $C>0$ such that for every finite-dimensional subspace $E$
of $X$ there exists a finite-dimensional subspace $F$ of $Y$ such
that $d(E, F)\le C$, and vice versa.
\end{defn}

Here we refer to the structure of finite-dimensional subspaces of a
Banach space as its local structure. In the literature the local
equivalence between $X$ and $Y$ is sometimes informally referred to
as $X$ and $Y$ having the same finite-dimensional subspaces. Ribe's
theorem states that uniformly homeomorphic spaces are locally
equivalent. The converse is not true. For instance, as we mentioned
in the Introduction, $\ell_p$ is not uniformly homeomorphic to $L_p$
for $1\le p<\infty$, $p\neq 2$; however, they are locally
equivalent.

In the following we compute the descriptive complexity of the local
equivalence as an equivalence relation on either the Polish space
${\mathfrak B}$ of all separable Banach spaces or the Polish space
${\mathfrak B}_b$ of all separable Banach spaces with basis.

\begin{lem}
Local equivalence is a $\bs^0_3$ equivalence relation on either
${\mathfrak B}$ or ${\mathfrak B}_b$.
\end{lem}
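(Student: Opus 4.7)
The plan is to decompose the defining condition of $\loce$ quantifier by quantifier and track the descriptive pointclass at each stage. The decisive observation is that for fixed rational finite-dimensional subspaces $E,F$ of equal dimension and a fixed rational constant $C>0$, the set $\{(x,y):d(E,F)<C\}$ is \emph{open} in the product of codings; once this is secured, the quantifier pattern $\exists C\,\forall E\,\exists F$ forces overall complexity $\bs^0_3=G_{\delta\sigma}$.

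First I would pass to a countable formulation. In ${\mathfrak B}_b$ fix, for each $k$, the countable family $D_k(X)$ of rational $k$-dimensional subspaces spanned by linearly independent rational combinations of the universal basis; in ${\mathfrak B}$ do the same using rational combinations of a fixed Borel dense selection $(d_n(X))_n$ from $X$. Since every finite-dimensional subspace is arbitrarily close in the Banach-Mazur pseudometric to such a rational subspace, and since enlarging the distortion constant by $1$ converts $\le C$ to $<C+1$, one obtains
\[
X\loce Y\iff \exists C\in\N\,\bigl[\forall k,\ \forall E\in D_k(X),\ \exists F\in D_k(Y),\ d(E,F)<C\ \text{and vice versa}\bigr].
\]

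Next I would establish the key openness lemma. By density of rational matrices among isomorphisms, $d(E,F)<C$ is equivalent to the existence of an invertible rational matrix $T\colon E\to F$ (in the chosen rational bases) with $\|T\|\cdot\|T^{-1}\|<C$. For each fixed $T$, both $\|T\|$ and $\|T^{-1}\|$ are jointly continuous in the codes $(x,y)$: in ${\mathfrak B}_b$ the norm of each rational combination of the basis is an explicit entry of the code, and finite-dimensional norm equivalence promotes pointwise continuity on rational vectors to continuity of the supremum over the unit sphere of $E$. Hence $\|T\|\cdot\|T^{-1}\|<C$ is open, and the countable union over rational $T$ preserves openness.

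Putting the pieces together, for each fixed $C$ the conjunction of the two symmetric clauses $\forall E\,\exists F\,[d<C]$ is a countable intersection of countable unions of open sets, hence $G_\delta=\bp^0_2$, and the outer $\exists C\in\N$ produces a countable union of such sets, yielding $\bs^0_3$. I expect the main obstacle to be the openness claim for the coding ${\mathfrak B}$, where the Borel dense selection is not continuous in the Wijsman topology: one must argue openness of the strict inequality more carefully, for instance by working with a selection of minimal Borel complexity and exploiting that norms are computed through the ambient $C[0,1]$-norm, together with finite-dimensional norm equivalence.
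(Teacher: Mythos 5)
Your outline for $\mathfrak{B}_b$ is essentially the paper's argument: reduce to rational finite-dimensional subspaces indexed by a fixed enumeration, rewrite local equivalence as $\exists C\,\forall E\,\exists F$, and reduce the $\bs^0_3$ count to the claim that for fixed rational $E,F$ and fixed rational $M$ the set $\{(x,y):d(E,F)<M\}$ is open. Your variant of witnessing $d(E,F)<C$ by a \emph{rational} matrix $T$ is fine by density and the strict inequality, and is harmlessly different from the paper, which keeps $T$ general. However, the openness step is where all the content lives, and your one-sentence justification --- ``finite-dimensional norm equivalence promotes pointwise continuity on rational vectors to continuity of the supremum over the unit sphere'' --- is the claim, not a proof. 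The paper devotes most of its $\mathfrak{B}_b$ argument to exactly this: fixing $\rho$ bounding the $\|x_i\|$, fixing $\eta$ a lower bound for $\|\sum\alpha_i x_i\|$ over the unit sphere, and producing explicit $\epsilon$-$\delta$ estimates (via a finite rational net in $S_N$) showing that $\|T\|$ varies by at most $\epsilon$ on a neighborhood of $x$. You would need to carry out estimates of this kind to close the gap, but there is no obstruction and the plan is sound.

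The $\mathfrak{B}$ case is a genuine gap, and you have correctly identified where, but your proposed repair does not work. If you build $D_k(X)$ from a \emph{Borel selection} $(d_n(X))_n$ of dense points in $X$, then the ``subspace'' $E$ is not a fixed object of $C[0,1]$ but a Borel function of $X$; the set $\{(X,Y):d(E(X),F(Y))<C\}$ is then merely Borel, with no control on its level, and the $\exists C\,\forall E\,\exists F$ count gives you nothing. Lowering the ``Borel complexity of the selection'' does not fix this: any measurable selector from the Wijsman topology will generically be discontinuous, and continuity is what your openness claim requires. The paper sidesteps selections entirely. It fixes once and for all a countable dense $D\subseteq C[0,1]$ and replaces ``rational finite-dimensional subspace of $X$'' by ``finite tuple $s\in D^{<\omega}$ whose coordinates are within $2^{-n}$ of $X$''; the predicate $d(s_i,X)<2^{-n}$ is literally a subbasic Wijsman open condition in $X$. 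Near-isomorphisms are then encoded by a precomputed countable set $Q$ of quadruples $(s,t,n,q)$ (a fixed countable object independent of $X,Y$), and the $\bs^0_3$ formula quantifies over $Q$ with only Wijsman-open atomic conditions inside. That reorganization --- quantifying over ambient rational data rather than over data selected from $X$ --- is the idea you are missing for $\mathfrak{B}$.
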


\begin{proof}
First consider $\loce$ as an equivalence relation on ${\mathfrak
B}_b$. Let $(X, (e_i))$ and $(Y,(f_i))$ be the Banach spaces with basis
coded as elements of $\mathfrak{B}_b$ by $x,y \in \mathbb{R}^\omega$.
Note that every finite-dimensional subspace of $(X, (e_i))$ can be
approximated by a space with a (finite) basis consisting of finite
rational linear combinations of the $e_i$. We use the enumeration
$\{s^n\}$ of $\Q^{<\omega}$ in the definition of ${\mathfrak
B}_b$. For $s^n=(a^n_1, \dots, a^n_k) \in \Q^{< \omega}$, let
$s^n(X)=\sum_{i=1}^k a^n_i e_i$. For $\vec n=n_1,\dots, n_N\in\omega$, let
$X_{\vec n}$ be the $\leq N$-dimensional subspace of $X$ with
basis $s^{n_1}(X), \dots, s^{n_N}(X)$. Similarly we define $s^m(Y)$
and $Y_{\vec m}$ for $\vec m= m_1,\dots, m_M\in\omega$.

Let $I$ be the set of $\vec n =(n_1,\dots, n_N) \in \N^{<\omega}$  such that the vectors
$s^{n_1}, \dots, s^{n_N}$ are linearly independent. Note that if $X \in \mathfrak{B}_b$
is coded by
$x$,  and $\vec n \in I$, then the vectors  $s^{n_1}(X),\dots, s^{n_N}(X)$
are linearly independent in the space $X$ (since $x$ codes a monotone
basis for $X$).

With this
notation we have that $X\loce Y$ iff
$$\exists M\geq 1\ \ \forall N\geq 1\ \forall (n_1, \dots, n_N) \in I \ \exists
(m_1, \dots, m_N) \in I\ d(X_{n_1,\dots, n_N}, Y_{m_1,\dots, m_N})<M $$ and
vice versa. It suffices  to show that for fixed $\vec n$ and $\vec m$ that the relation on
$\mathfrak{B}_b$ given by $$U(x,y) \Leftrightarrow
d(X^x_{n_1,\dots, n_N}, Y^y_{m_1,\dots, m_N}) < M$$ is open,
where $X^x$ and $Y^y$ denote the spaces coded by $x$ and $y$.
Fix $x,y$ with $U(x,y)$. Let $T \colon X^x_{\vec n} \to Y^y_{\vec m}$
be a linear isomorphism with $\|T\| \|T^{-1}\| < M$. Let $x_1,\dots, x_N$ denote
$s^{n_1}(X),\dots, s^{n_N}(X)$, and let $y_1=T(x_1),\dots, y_N=T(x_N)$.
For $x' \in \mathfrak{B}_b$, let $x'_1,\dots, x'_N$ denote $s^{n_1}(X'),\dots,
s^{n_N}(X')$ where $X'$ is the space coded by $x'$. Let $T' \colon X'_{\vec n} \to Y$
be the linear map defined by $T'(x'_1)=y_1, \dots, T'(x'_N)=y_N$.

It suffices by symmetry to show that for any
$\epsilon >0$ there is an open $V \subseteq \R^\omega$ containing $x$
such that for all $x' \in \mathfrak{B}_b \cap V$ we have
$| \| T\| -\| T'\| |<\epsilon$. Let $\rho>0$ be such that
$\rho < \min \{ \| x_i \| \} \leq \max \{ \| x_i \| \}< \frac{1}{\rho}$. Let
$0< \eta  < \inf \{ \| \alpha_1 x_1 + \cdots + \alpha_N x_N \|_X
\colon \vec \alpha \in S_N \}$, where $S_N=\{ \vec \alpha \colon \sum \alpha_i^2=1\}$.
 By definition of the product topology on
$\R^\omega$, there is clearly an open set $V_1$ about $x$ such that for $x' \in V_1$ we have
$\rho < \min \{ \| x'_i \| \} \leq \max \{ \| x'_i \| \}< \frac{1}{\rho}$.
It thus suffices to show that for all $\epsilon >0$
there is a neighborhood $V \subseteq V_1$ of $x$ such that for all $x' \in V$ we have that
$| \| \alpha_1 x_1 + \cdots + \alpha_N x_N \|_X -
\| \alpha_1 x'_1 + \cdots + \alpha_N x'_N \|_{X'}  | <
\epsilon \left( \frac{\eta^2 \rho}{2 N \| T\| } \right)$, for all
$\vec \alpha\in S_N$. For then, letting $v= \alpha_1 x_1 + \cdots + \alpha_N x_N $
and $v'=\alpha_1 x'_1 + \cdots + \alpha_N x'_N $ we have (noting $T(v)=T(v')$ and assuming
$\epsilon < \frac{\eta}{2}$):

\begin{equation*}
|\frac{ \| T(v)\|}{\| v\|} -\frac{\| T(v')\|}{\| v'\| } |=
\frac{\| T(v)\| }{\| v\| \| v'\|}
(| \| v\| -\| v'\||) \leq
\frac{\| T\| N} {\rho}  \frac{2}{\eta^3} (| \| v\| -\| v'\||)
\leq \epsilon
\end{equation*}

Let $\mathfrak{N} \subseteq S_N \cap \Q^N$ be such that
for all $\vec \alpha \in S_N$, there is a $\vec q \in \mathfrak{N}$
such that $|\alpha_i - q_i| < \frac{\epsilon \rho \delta}{3N}$ for all $i$,
where $\delta=\frac{\eta^2 \rho}{2 N \| T\| }$.
(the rational points on $S_N$ are dense).
Given $\vec \alpha \in S_N$, let $\vec q \in \mathfrak{N}$ be such that
$|\alpha_i-q_i|< \frac{\epsilon \rho \delta}{3N}$ for all $1 \leq i \leq N$.
We have that $| \| \sum \alpha_i x_i\| -\| \sum q_i x_i \| |<
(\frac{\epsilon \rho \delta}{3N})(N) \max \{ \| x_i\|\}
\leq \frac{\epsilon \delta}{3}$, with a similar estimate for the $x'_i$. Since the $q_i$ and
$s^{n_i}$ are rational, if $V$ is a small enough neighborhood of $x$ and
$x' \in V$ we will have $| \| \sum q_i x_i \| - \| \sum q_i x'_i\| |< \frac{\epsilon \delta}{3}$.
Thus, $| \| \sum \alpha_i x_i\| -\| \sum \alpha_i x'_i\| |<\epsilon \delta$.

Next consider $\loce$ as an equivalence relation on ${\mathfrak B}$.
Fix a countable dense $D\subseteq C[0,1]$. For this part of the
proof let $d$ be the metric on $C[0,1]$ given by the norm. Let $Q$
be the set of all quadruples $(s, t, n, q)$ such that
\begin{enumerate}
\item[(1)] $s, t\in D^{<\omega}$ (the set of all finite
sequences of elements of $D$), $n\in\omega$, $q\in\Q$;
\item[(2)] there is some $k\in\omega$ such that $s, t\in D^k$,
that is, $|s|=|t|=k$;
\item[(3)] if $s=(s_1,\dots,s_k)$, $t=(t_1,\dots,t_k)$,
then for any $x_1,\dots, x_k, y_1,\dots,y_k\in C[0,1]$ such that
$d(s_i,x_i), d(t_i,y_i)<2^{-n}$ for $1\leq i\leq k$, letting $T$ be
the linear map from $\spn(x_1, \dots, x_k)$ to $\spn(y_1,\dots,y_k)$
with $T(x_i)=y_i$ for $1\leq i\leq k$, we have that
$\|T\|\|T^{-1}\|< q$.
\end{enumerate}

Note that if $x_1,\dots, x_k, y_1,\dots, y_k\in C[0,1]$ and the
linear map $T:\spn(x_1, \dots, x_k)\to \spn(y_1,\dots,y_k)$ sending
$x_i$ to $y_i$ satisfies $\|T\|\|T^{-1}\|<C$ for some $C>0$, then
there is a quadruple $(s,t,n,q)\in Q$ such that $q<C$ and for all
$1\leq i\leq k$, $d(s_i,x_i), d(t_i,y_i)<2^{-n}$.

We claim that for any $X, Y\in F(C[0,1])$, $X\loce Y$ iff
$$\begin{array}{rl}
&\exists C \in \Q\ \forall k\in\omega\ \forall z_1,\dots,
z_k\in D\ \forall \epsilon\in\Q\ \\
&\hspace{20pt}\{\ \forall 1\leq i\leq k \
 d(z_i, X)<\epsilon \Longrightarrow \exists (s, t, n, q)\in Q\
  [\,q < C\, \wedge\, 2^{-n}<\epsilon\, \wedge \\
& \hspace{40pt}\forall 1\leq i \leq k\ (\,d(s_i, z_i)<\epsilon\,
\wedge\, d(t_i, Y)<2^{-n}\, \wedge\, d(s_i, X)<2^{-n}\,)\,]\ \}.
\end{array}
$$

To prove the claim, first suppose $X\loce Y$, and let $C>0$ be a
witness. Suppose $z_1,\dots, z_k \in D$ and $\epsilon$ are given,
and for $1\leq i \leq k$ let $x_i\in X$ be such that $d(x_i,
z_i)<\epsilon$. By the local equivalence between $X$ and $Y$ there
are $y_1, \dots, y_k\in Y$ such that the linear map
$T:\spn(x_1,\dots, x_k)\to\spn(y_1,\dots, y_k)$ sending $x_i$ to
$y_i$ satisfies that $\|T\|\|T^{-1}\|<C$. We have a quadruple
$(s,t,n,q)\in Q$ such that $q<C$, $d(s_i, x_i), d(t_i, y_i)<2^{-n}$
for all $1\leq i\leq k$. Moreover, we may choose $n$ to be large
enough such that $2^{-n}<\epsilon$ and $d(s_i, z_i)<\epsilon$. This
verifies the displayed property.

For the other implication, let $C$ be as in the displayed property.
Let $x_1, \dots, x_k\in X$ be given. Let $\epsilon>0$ be
sufficiently small compared with $k$.  Let $z_1,\dots, z_k \in D$
with $d(z_i,x_i)<\epsilon$ for all $1\leq i\leq k$. Then the
displayed property gives a quadruple $(s,t,n,q)\in Q$. Thus $q<C$,
$2^{-n}<\epsilon$, and for $1\leq i\leq k$, $d(s_i,z_i)<\epsilon$,
$d(t_i, Y)<2^{-n}$, $d(s_i,X)<2^{-n}$. In particular there are
$y_1,\dots,y_k \in Y$ such that $d(t_i, y_i)<2^{-n}$ for all $1\leq
i\leq k$, and by the definition of $Q$ the linear map
$T:\spn(s_1,\dots, s_k)\to \spn(y_1,\dots, y_k)$ sending $s_i$ to
$y_i$ satisfies that $\|T\|\|T^{-1}\|<q$. Since $d(s_i,
x_i)<2\epsilon$, and $\epsilon$ is sufficiently small, the map
$S:\spn(x_1,\dots, x_k)\to \spn(y_1,\dots, y_k)$ sending $x_i$ to
$y_i$ satisfies that $\|S\|\|S^{-1}\|<C+1$.

The displayed property is apparently $\bs^0_3$ in the Wijsman
topology on $F(C[0,1])$. It follows that $\loce$ is $\bs^0_3$ on
${\mathfrak B}$.
\end{proof}

We can also consider the local equivalence on the space $2^\omega$
of codes for Banach spaces with basis via the Pe\l czy\'nski
universal basis. Recall that there is a continuous function
$\varphi: [\omega]^\omega\to {\mathfrak B}_b$ such that for any $x\in
2^\omega$, $X_x$ is linearly isometric to $Y_{\varphi(x)}$.
Via this map the local equivalence on $[\omega]^\omega$ is continuously
reduced to $\loce$ on ${\mathfrak B}_b$. It follows that the local
equivalence on $2^\omega$ is also $\bs^0_3$.

Now it follows from Lemma~\ref{Ew} that $E_0^\omega$ is not Borel
reducible to $\loce$ on either ${\mathfrak B}$ or ${\mathfrak B}_b$,
and by Lemma~\ref{lem:E0w=+} $=^+$ is not Borel reducible also to
either of them. In Section~\ref{sec:Ribe} we will prove that in fact
$\loce$ (on either ${\mathfrak B}$ or ${\mathfrak B}_b$) is Borel
bireducible to $\ell_\infty$, thus completely determine its
complexity in the Borel reducibility hierarchy.

\section{\label{sec:cons}The uniform homeomorphism on a class of Banach spaces}

In this section we construct a class of Banach spaces and completely
characterize its uniform homeomorphism relation. In the construction
and proofs we use a few well-known results in Banach space theory.
Our standard reference for undefined terms and unexplained results
is \cite{T}.

Recall that two given bases $(x_i)$ and $(y_i)$ of Banach spaces are
said to be \emph{$C$-equivalent} for $C>0$ if there exist positive
constants $A, B$ with $AB\le C$ such that for all scalar sequences
$(a_i)$,
$$\frac{1}{A}\big\|\sum_i a_i x_i\big\|\le \big\|\sum_i a_i y_i\big\|\le
B\big\|\sum_i a_i x_i\big\|.$$

We will make use of the following important notion in the study of
the local structures of Banach spaces. Let $X$ be a Banach space and
let $1\le p\le 2$. The \emph{type $p$ constant $T_{p,n}(X)$ of $X$
over $n$ vectors} is the smallest positive number such that for
arbitrary $n$ vectors $x_1, \ldots, x_n\in X$,

\begin{equation}\label{type-def}\left(\underset{\ep_i=\pm
1}{\textrm{Ave}}\big\|\sum_{i=1}^n \ep_i x_i\big\|^2\right)^{1/2}\le
T_{p,n}(X) \big(\sum_{i=1}^n\|x_i\|^p\big)^{1/p}.
\end{equation}

$X$ is said to have \emph{type $p$} if $T_p(X)=\sup_n
T_{p,n}(X)<\infty$. For $\ell_q^n$ spaces type $p$ constants can be
easily computed and they satisfy the following estimates
\begin{eqnarray}
\label{type-estimates}n^{\max(0, 1/q - 1/p)}&\le& T_p(\ell_q^n)\le c
q^{1/2}n^{\max(0, 1/q - 1/p)}\ \textrm{for}\ 1\le q<\infty,
\end{eqnarray}
where $c$ is a universal constant. Moreover, the proof of the lower
estimate in (\ref{type-estimates}) also shows that for $n\le k$,
\begin{equation}\label{type-n-estimates}
T_{p,n}(\ell_q^k)\le c q^{1/2}n^{\max(0, 1/q - 1/p)}.
\end{equation}

Note that type is a linear notion, in particular, if $T:Y\to X$ is a
linear embedding then $T_{p,n}(Y)\le \|T\| \|T^{-1}\| T_{p,n}(X)$.

For a sequence $\vec{p}=(p_i)\in (1,2)^\omega$ by $S_{\vec{p}}$ we
will denote the $\ell_2$-direct sum of finite-dimensional
$\ell_{p_i}^{n_i}$ spaces for a fixed sequence of increasing
dimensions $(n_i)$. That is,

$$S_{\vec{p}}:=\big(\sum_{i=1}^{\infty}\oplus\,
\ell_{p_i}^{n_i}\big)_2.$$

The next theorem singles out a class of such spaces on which
isomorphism, local equivalence and uniform homeomorphism relations
all coincide.

\begin{thm} \label{cond}
Let $I_i=[l_i, r_i]$ be a sequence of successive intervals in
$(1,2)$. Then there exists $(n_i)\in \omega^\omega$ such that for
$\vec{p}=(p_i)$ and $\vec{q}=(q_i)$ with each $p_i, q_i\in I_i$ we
have that $S_{\vec{p}}$ is uniformly homeomorphic to $S_{\vec{q}}$
if and only if there exists a constant $C\ge 1$ such that
$$n_i^{|\frac{1}{p_i}-\frac{1}{q_i}|}\le C$$ for all $i\in\omega$.
\end{thm}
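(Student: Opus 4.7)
My plan is to prove the two implications of the equivalence separately. The sufficiency reduces to an explicit block-diagonal isomorphism, while the necessity combines Ribe's theorem with estimates on type constants over $n_i$ vectors, calibrated against a carefully chosen sequence $(n_i)$.

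For the \textbf{sufficiency}, assume $n_i^{|1/p_i - 1/q_i|} \leq C$ for all $i$. The standard Banach--Mazur estimate $d(\ell_p^n, \ell_q^n) \leq n^{|1/p - 1/q|}$ (from the formal identity combined with H\"older's inequality) yields linear isomorphisms $T_i : \ell_{p_i}^{n_i} \to \ell_{q_i}^{n_i}$ with $\|T_i\|\|T_i^{-1}\| \leq C$, which after rescaling satisfy $\|T_i\|, \|T_i^{-1}\| \leq \sqrt{C}$. The block-diagonal operator $T = \bigoplus_i T_i$ is then a linear isomorphism of $S_{\vec{p}}$ onto $S_{\vec{q}}$, since the $\ell_2$-sum structure gives $\|Tx\|^2 = \sum_i \|T_i x_i\|^2 \leq C\|x\|^2$ and symmetrically for $T^{-1}$. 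Thus $S_{\vec{p}}$ and $S_{\vec{q}}$ are linearly isomorphic, hence uniformly homeomorphic.

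For the \textbf{necessity}, assume $S_{\vec{p}}$ and $S_{\vec{q}}$ are uniformly homeomorphic. Ribe's Theorem~\ref{thm:Ribe} supplies a constant $C \geq 1$ making $S_{\vec{p}} \loce S_{\vec{q}}$ with constant $C$, so for each $i$ there is an $n_i$-dimensional subspace $F_i \leq S_{\vec{q}}$ with $d(\ell_{p_i}^{n_i}, F_i) \leq 2C$. By the symmetric role of $\vec{p}$ and $\vec{q}$, I may assume $p_i < q_i$. Evaluating the type-$q_i$ inequality~(\ref{type-def}) on the canonical basis of $\ell_{p_i}^{n_i}$ produces the sharp lower bound $T_{q_i, n_i}(\ell_{p_i}^{n_i}) \geq n_i^{1/p_i - 1/q_i}$, which by the isomorphism invariance of type constants (with factor $\|T\|\|T^{-1}\|$) transfers to $T_{q_i, n_i}(F_i) \geq n_i^{1/p_i - 1/q_i}/(2C)$. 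For a matching upper bound, I decompose $S_{\vec{q}} = (\sum_j \oplus \ell_{q_j}^{n_j})_2$, apply the type inequality block-by-block, and then invoke Minkowski's inequality $\|\cdot\|_{\ell_2(\ell_{q_i})} \leq \|\cdot\|_{\ell_{q_i}(\ell_2)}$ (valid because $q_i \leq 2$) to recombine, obtaining $T_{q_i, n_i}(S_{\vec{q}}) \leq \sup_j T_{q_i, n_i}(\ell_{q_j}^{n_j})$. The successive-interval hypothesis together with~(\ref{type-n-estimates}) handles this supremum: for $j \geq i$ one has $q_j \geq q_i$, so the contribution is at most $c\sqrt{2}$; for $j < i$ one has $q_j \leq q_i$, and the contribution is at most $c\sqrt{2}\min(n_i, n_j)^{1/q_j - 1/q_i} \leq c\sqrt{2}\, n_j^{1/l_j - 1/2}$ (using $n_j \leq n_i$, $q_j \geq l_j$, and $q_i < 2$).

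The \textbf{main obstacle}, which the theorem leaves me freedom to address through the choice of $(n_i)$, is the calibration of this sequence so that these block estimates remain uniform in $i$. The sequence must grow fast enough to make the condition $n_i^{|1/p_i - 1/q_i|} \leq C$ a genuine restriction across the intervals $I_i$, yet slow enough that $\sup_j n_j^{1/l_j - 1/2}$ is finite. A concrete calibration of the form $n_j = \lfloor M^{2 l_j /(2 - l_j)} \rfloor$ for a large constant $M$ meets both demands: the upper bound forces $n_j^{1/l_j - 1/2} \leq M$ uniformly, while the exponent $2 l_j/(2-l_j)$ grows as $l_j \to 2$, allowing the dimensions to grow fast enough along intervals accumulating near $2$. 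With such a choice, $T_{q_i, n_i}(S_{\vec{q}}) \leq K$ for a constant $K$ independent of $i$, and combining with the lower bound yields $n_i^{1/p_i - 1/q_i} \leq 2CK$, completing the proof.
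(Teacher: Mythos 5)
Your sufficiency argument is correct and matches the paper's. The problem is in the necessity direction, where calibrating against the type-$q_i$ constant $T_{q_i,n_i}$ rather than the type-$2$ constant $T_{2,n_i}$ creates an obstruction that no admissible choice of $(n_i)$ can remove while leaving the theorem non-vacuous.

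Your argument needs $\sup_i \sup_{j<i} T_{q_i,n_i}(\ell_{q_j}^{n_j}) \leq K$, which you relax to $\sup_j n_j^{1/l_j - 1/2} < \infty$. But since each $r_j < 2$ one has $1/l_j - 1/r_j < 1/l_j - 1/2$, so $\sup_j n_j^{1/l_j - 1/2} < \infty$ forces $\sup_j n_j^{1/l_j-1/r_j} < \infty$, and then $n_i^{|1/p_i-1/q_i|}$ is automatically bounded for \emph{every} choice of $p_i, q_i \in I_i$: both sides of the equivalence are always true and the theorem says nothing (in particular it cannot feed the reduction in Theorem~\ref{embthm}). The loss is not merely a slack estimate: for $j<i$ the subspace $\ell_{q_j}^{n_j} \subseteq S_{\vec q}$ gives $T_{q_i}(S_{\vec q}) \geq n_j^{1/q_j-1/q_i}$, and on the construction actually used in Theorem~\ref{embthm} (namely $n_j = 2^{b(j)2^j}$ with $|I_j| = 2^{-j-1}$) one computes $n_j^{1/q_j-1/q_i} \approx 2^{cb(j)} \to \infty$, so $T_{q_i}(S_{\vec q})$ is genuinely unbounded. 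Your explicit calibration $n_j = \lfloor M^{2l_j/(2-l_j)}\rfloor$ also fails to be increasing unless $l_j \to 2$, which is not part of the hypothesis.

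The paper sidesteps this by measuring type $2$ over $n=n_{i_0}$ vectors: the lower bound $T_{2,n}(\ell_p^n) \geq n^{1/p-1/2}$ and the upper bound $T_{2,n}(S_{\vec q}) \leq \sqrt{2}c\,n^{1/q-1/2}$ \emph{both} grow with $n$, and the ratio yields the desired $n^{1/p-1/q}$ without demanding any uniform bound on the type constant of $S_{\vec q}$. The $\ell_2$-sum interacts exactly with type $2$, so $\text{Ave}\|\sum_j \epsilon_j x^j\|^2 = \sum_i \text{Ave}\|\sum_j \epsilon_j x_i^j\|_{q_i}^2$ decomposes the estimate blockwise with no Minkowski step; for $i \geq i_0$ the bound $T_{2,n}(\ell_{q_i}^{n_i}) \leq cq_i^{1/2}n^{1/q_i-1/2}$ uses only the \emph{number} of vectors $n$, not the dimension $n_i$; and for $i < i_0$ the condition on $(n_i)$ is used only to ensure $n_i^{1/q_i-1/2} \leq n_{i_0}^{1/q_{i_0}-1/2}$, i.e., that this sequence is non-decreasing — a much weaker constraint than boundedness, and one compatible with $\sup_i n_i^{1/l_i-1/r_i}=\infty$. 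Replacing your type exponent $q_i$ by $2$, and replacing the target ``bounded'' by ``controlled by $n_{i_0}^{1/q_{i_0}-1/2}$,'' is the missing idea.
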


\begin{proof}
For any sequence of dimensions $(n_i)$ if
$n_i^{|\frac{1}{p_i}-\frac{1}{q_i}|}\le C$ for some $C<\infty$, then
$d(\ell_{p_i}^{n_i},\ell_{q_i}^{n_i})\le C$ for all $i\in\omega$. In
fact, in this case the unit vector bases are $C$-equivalent. From
this it easily follows that $S_{\vec{p}}$ and $S_{\vec{q}}$ are
$C$-isomorphic, and in particular, they are uniformly homeomorphic.

Let $I_i=[l_i, r_i]$ be a sequence of given intervals in $(1,2)$.
Pick a sequence $(n_i)$ of natural numbers such that

\begin{equation}\sup n_i^{\frac{1}{l_i}-\frac{1}{r_i}}=\infty  \mbox{ and } \label{n_i}n_{i+1}^{1/r_{i+1}}\ge n_i^{1/l_i}, \ \ i=1,2,3,\dots.
\end{equation}

Suppose without loss of generality that $p_i< q_i\in [l_i, r_i]$
with $\sup_i n_i^{1/p_i -1/q_i}=\infty$. By Ribe's theorem
\ref{thm:Ribe} it is sufficient to show that the spaces in the
sequence $(\ell_{p_i}^{n_i})$ do not linearly embed in $S_{\vec{q}}$
with a uniform constant.

Fix $i_0\in\omega$. Put $n=n_{i_0}$, $p=p_{i_0}$ and let
$T:\ell_p^n\to S_{\vec{q}}$ be a linear embedding. Since
$$T_{2,n}(\ell_p^n)\le \|T\| \|T^{-1}\| T_{2,n}(S_{\vec{q}}),$$
and $T_{2,n}(\ell_p^n)\ge n^{1/p-1/2}$, we need an upper estimate for
$T_{2,n}(S_{\vec{q}})$.

Let $I_{i_0}$ be such that $p, q=q_{i_0}\in I_{i_0}$. That is, $q_{i_0
-1}<p<q_{i_0}=q$. Let $x^1, \ldots, x^n\in S_{\vec{q}}$. Then,
writing each $x^j$ as $\sum_{i=1}^{\infty}x_i^j$ where $x_i^j\in
\ell_{q_i}^{n_i}$, we have

\begin{eqnarray*}
\underset{\ep_j=\pm 1}{\textrm{Ave}}\big\|\sum_{j=1}^n \ep_j
x^j\big\|_{S_{\vec{q}}}^2&=&\underset{\ep_j=\pm
1}{\textrm{Ave}}\big\|\sum_{j=1}^n \ep_j
\big(\sum_{i=1}^{\infty}x_i^j\big)\big\|_{S_{\vec{q}}}^2=\underset{\ep_j=\pm
1}{\textrm{Ave}}\sum_{i=1}^{\infty}\big\|\sum_{j=1}^n \ep_j
x_i^j\big\|_{q_i}^2\\
&\le& \sum_{i<i_0}\underset{\ep_j=\pm
1}{\textrm{Ave}}\big\|\sum_{j=1}^n \ep_j x_i^j\big\|_{q_i}^2 +
\sum_{i\ge i_0}\underset{\ep_j=\pm
1}{\textrm{Ave}}\big\|\sum_{j=1}^n \ep_j x_i^j\big\|_{q_i}^2\\
&\le& \sum_{i<i_0} T_2^2(\ell_{q_i}^{n_i})\sum_{j=1}^n
\|x_i^j\|_{q_i}^2 + \sum_{i\ge i_0}
T_{2,n}^2(\ell_{q_i}^{n_i})\sum_{j=1}^n \|x_i^j\|_{q_i}^2.
\end{eqnarray*}

Using the estimates (\ref{type-estimates}) for $i<i_0$ and
(\ref{type-n-estimates}) for $i\ge i_0$ sums, the last inequality is
less than or equal to

$$\sum_{i<i_0} c^2 q_i n_i^{2/q_i -1}\sum_{j=1}^n \|x_i^j\|_{q_i}^2 +
\sum_{i\ge i_0} c^2 q_i n^{2/q-1}\sum_{j=1}^n \|x_i^j\|_{q_i}^2,$$
which is, by  (\ref{n_i}), less than

$$ 2c^2 n^{2/q-1} \sum_{j=1}^n\sum_{i=1}^{\infty}\|x_i^j\|_{q_i}^2
=2c^2n^{2/q -1}\sum_{j=1}^n \|x^j\|_{S_{\vec{q}}}^2.
$$

Thus, we have shown that $T_{2,n}(S_{\vec{q}})\le \sqrt{2}c
n^{1/q-1/2}$. It follows that
$$\|T\| \|T^{-1}\|\ge \frac{n^{1/p-1/2}}{\sqrt{2}c n^{1/q-1/2}}
=\frac{n^{1/p-1/q}}{\sqrt{2}c}.$$
\end{proof}

\begin{comment}
We make some further conclusions from the proof of Theorem~\ref{cond}.
Let $\mathcal{I}$ be a sequence of successive intervals $I_i=[l_i,
r_i]$ in $[1,2]$ as above. The proof of the theorem, in particular,
shows that for every $\mathcal{I}$ there exists
$\vec{n}=(n_i)\in\omega$ such that for the class
$$\mathcal{C}(\mathcal{I},
\vec{n})=\left\{S_{\vec{p}}=\big(\sum_{i=1}^{\infty}\oplus
\ell_{p_i}^{n_i}\big)_2\,:\, \vec{p}=(p_i), p_i\in I_i\right\}$$ the
equivalence of (natural) bases, isomorphism and uniform
homeomorphism relations all coincide.

In particular, by duality, for every decreasing sequence
$\mathcal{J}$ of intervals $J_i$ in $[2,\infty)$ there exists
$\vec{n}=(n_i)\in\omega$ such that the same holds for the class
$$\mathcal{C}(\mathcal{J},
\vec{n})=\left\{S_{\vec{q}}=\big(\sum_{i=1}^{\infty}\oplus
\ell_{q_i}^{n_i}\big)_2\,:\, \vec{q}=(q_i), q_i\in J_i\right\}.$$

In the next section the quantitative characterization for the
uniform homeomorphism relation on the class of Banach spaces in
Theorem~\ref{cond} will be shown to have the same complexity as the
equivalence relation $\ell_\infty$.
\end{comment}

\section{\label{sec:Ribe}The complexity of the uniform homeomorphism and the local equivalence}

In this section we prove the main theorems of our paper. In doing
this we also define some natural equivalence relations and
characterize their complexity. Some of the equivalence relations to be defined in this section have already been considered in
\cite{Ro}. For instance, Lemma~\ref{emba}, Definition~\ref{proder}, and the beginning of Theorem~\ref{thm:linfX} can be found in \cite{Ro}.
For the sake of completeness we give all definitions and proofs in a self-contained manner.

For notational clarity we use the following convention in this
section. Let $X$ be a set. We use $\vec{x}$ to denote an element of
$X^\omega$, the set of the all infinite sequences of elements of
$X$. The coordinates of $\vec{x}$ will be denoted by $x(n)$ for
$n\in\omega$. Thus $\vec{x}=(x(n))=(x(0), x(1), \dots)$. This is
slightly different from previous sections, but it provides the most
convenience for the arguments of this section.

Recall that the equivalence relation $E_{\ell_\infty}$ (simply
$\ell_\infty$ when there is no danger of confusion) is the
equivalence relation on $\R^\omega$ defined by
$$\vec x \, E_{\ell_\infty} \, \vec  y \iff \exists C\ \forall n\ |x(n)-y(n)| <C
$$
for $\vec{x},\vec{y}\in \R^\omega$. We consider the following
variation. Let $B$ be the set of all infinite increasing sequences
of positive real numbers without an upper bound. For any $\vec
b=(b(0),b(1), \dots)\in B$, we denote by $E_{\ell_\infty}^{\vec b}$
the equivalence relation $E_{\ell_\infty}$ restricted to the set
$\prod_{n\in\omega} [0,b(n)]$.

\begin{lem} \label{emba} For any $\vec{b}\in B$,
$E_{\ell_\infty} \leq_B E_{\ell_\infty}^{\vec b}$.
\end{lem}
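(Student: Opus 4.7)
The plan is to construct a continuous reduction $f\colon\R^\omega\to\prod_n[0,b(n)]$ that encodes each unbounded real coordinate $x(k)$ across an infinite block of bounded coordinates, by clipping the positive and negative parts of $x(k)$ separately. First I would fix a partition $\omega=\bigsqcup_k J_k$ into infinite pieces, and enumerate each $J_k=\{n^k_0<n^k_1<\cdots\}$ in increasing order. Given $\vec x\in\R^\omega$, I define $\vec y=f(\vec x)$ by
\[
y(n^k_{2i})=\max\bigl(0,\min(b(n^k_{2i}),x(k))\bigr),\qquad y(n^k_{2i+1})=\max\bigl(0,\min(b(n^k_{2i+1}),-x(k))\bigr).
\]
Then $y(n)\in[0,b(n)]$, and each coordinate of $f(\vec x)$ is a 1-Lipschitz function of a single coordinate of $\vec x$, so $f$ is continuous and thus Borel.

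For the forward direction the 1-Lipschitz property takes care of everything: if $|x(k)-x'(k)|\le C$ for every $k$, then for every $n\in J_k$ one has $|y(n)-y'(n)|\le |x(k)-x'(k)|\le C$, so $f(\vec x)\, E_{\ell_\infty}^{\vec b}\, f(\vec{x}\,')$. For the backward direction, suppose $|y(n)-y'(n)|\le C$ for every $n$, and fix $k$. Because $\vec b$ is increasing and unbounded on $\omega$, its restriction to the infinite set $J_k$ is still unbounded, so I can pick $i$ with both $b(n^k_{2i})$ and $b(n^k_{2i+1})$ larger than $\max(|x(k)|,|x'(k)|)$. For such $i$ the clips are inactive, and $x(k)=y(n^k_{2i})-y(n^k_{2i+1})$ with the analogous identity for $x'(k)$; consequently $|x(k)-x'(k)|\le 2C$.

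I expect no serious obstacle. The one point to be careful about is the uniformity of the constant in the backward direction: although the witness index $i$ may depend on $k$, the bound $2C$ does not, since it is extracted only from the global $\ell_\infty$-bound on $f(\vec x)-f(\vec{x}\,')$. This yields $\vec x\, E_{\ell_\infty}\, \vec{x}\,'$, completing the reduction.
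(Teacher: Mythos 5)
Your argument is correct. It takes the same global strategy as the paper (spread each coordinate $x(k)$ across an infinite block of target coordinates, so that the clipping becomes inactive once $b$ is large enough), but the encoding mechanism is genuinely different. The paper pairs coordinates via $n = \langle i,j\rangle$ and, for $x(i)\in[-b(j),b(j)]$, applies an affine bijection $\rho_j\colon[-b(j),b(j)]\to[0,b(j)]$ (clipping to the endpoints outside that interval); the price of folding a signed interval onto a nonnegative one is a Lipschitz constant of $1/2$, and in the backward direction one recovers $|x_1(i)-x_2(i)|/2 > C/2$. You instead split $x(k)$ into its positive and negative parts and record each, clipped, on alternate members of $J_k$. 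This keeps each target coordinate a $1$-Lipschitz image of $x(k)$, and for $i$ large you get exact recovery $x(k) = y(n^k_{2i}) - y(n^k_{2i+1})$, leading to the clean $2C$ bound. Both reductions are continuous and both work; yours avoids the affine rescaling at the small cost of using two coordinates per witness. One micro-simplification: since $\vec b$ is increasing and $n^k_{2i} < n^k_{2i+1}$, requiring $b(n^k_{2i}) > \max(|x(k)|,|x'(k)|)$ already forces $b(n^k_{2i+1})$ to exceed that bound, so a single condition suffices when choosing $i$.
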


\begin{proof}
For each $n\in\omega$ let $\rho_n$ be a linear map from
$[-b(n),b(n)]$ onto $[0,b(n)]$. Define $\pi \colon \R^\omega \to
\prod_{n\in\omega}[0,b(n)]$ by
$$
\pi(\vec{x})(\langle i,j\rangle)=\left\{\begin{array}{ll} \rho_j(x(i)),
& \mbox{if $x(i) \in [-b(j), b(j)]$,} \\
0, & \mbox{if $x(i)< -b(j)$,} \\
b(j), & \mbox{if $x(i)> b(j)$,}
\end{array}\right.
$$
for all $i,j\in\omega$. Clearly $\pi(\vec x) \in \prod_{n\in\omega}[0,b(n)]$.
Note that if $\pi(\vec x_1)=\vec y_1$, $\pi(\vec x_2)=\vec y_2$, then
$$|y_1(n)-y_2(n)| \leq |x_1(i)-x_2(i)|$$ for all $n=\langle i,j\rangle\in\omega$.
Thus $\vec x_1 E_{\ell_\infty} \, \vec x_2$ implies $\pi(\vec x_1)
E_{\ell_\infty}^{\vec b} \pi(\vec x_2)$. Suppose $\vec x_1$ is not
$E_{\ell_\infty}$-equivalent to $\vec x_2$. Then for any $C>0$,
there is an $i\in\omega$ such that $|x_1(i)-x_2(i)|>C$. Let $j$ be a
large enough integer such that $b(j)>\max \{ |x_1(i)|,|x_2(i)|\}$.
Let $n=\langle i,j\rangle$. Then $y_1(n)=\rho_j(x_1(i))$ and
$y_2(n)=\rho_j(x_2(i))$, and so $|y_1(n)-y_2(n)|>C/2$. So, $\pi(\vec
x_1)$ is not $E_{\ell_\infty}^{\vec b}$-equivalent to  $\pi(\vec
x_1)$. This shows that $\pi$ is a reduction from $E_{\ell_\infty}$
to $E_{\ell_\infty}^{\vec{b}}$. It is clear that $\pi$ is a Borel
function.
\end{proof}

We are now ready to prove our first main theorem.

\begin{thm} \label{embthm} The equivalence relation $\ell_\infty$
is Borel reducible to the uniform homeomorphism relation on either
${\mathfrak B}_b$ or ${\mathfrak B}$.
\end{thm}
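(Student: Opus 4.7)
The plan is to combine Theorem~\ref{cond} with Lemma~\ref{emba} to obtain a Borel reduction from $E_{\ell_\infty}$ directly through a parametrized family of spaces of the form $S_{\vec p}$. First I would fix successive intervals $I_i=[l_i,r_i]\subset (1,2)$ (for instance with $l_i,r_i\to 1$ from above) and choose dimensions $(n_i)$ satisfying hypothesis (\ref{n_i}) of Theorem~\ref{cond} and sufficiently large so that the sequence
$$b(i):=\log n_i\cdot\left(\frac{1}{l_i}-\frac{1}{r_i}\right)$$
is strictly increasing and tends to $+\infty$; this is possible since (\ref{n_i}) already forces $\sup_i b(i)=\infty$. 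Then $\vec b\in B$, so Lemma~\ref{emba} applies.

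Next I would define a continuous parametrization
$$\vec y\longmapsto \vec p(\vec y)\in\prod_i I_i,\qquad \frac{1}{p_i(\vec y)}:=\frac{1}{r_i}+\frac{y(i)}{\log n_i},$$
on $\prod_{i\in\omega}[0,b(i)]$, chosen precisely so that $\log n_i\cdot\big|\tfrac{1}{p_i(\vec y)}-\tfrac{1}{p_i(\vec y\,')}\big|=|y(i)-y'(i)|$ for all $i$. To each $\vec y$ I associate the Banach space $S_{\vec p(\vec y)}=\bigl(\sum_i\oplus\,\ell^{n_i}_{p_i(\vec y)}\bigr)_2$ together with its canonical basis formed by concatenating the unit vector bases of the summands. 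Since this basis is $1$-unconditional and the $\ell_2$-direct sum is monotone on initial blocks, the resulting basis is monotone, so $S_{\vec p(\vec y)}$ is coded by a point of $\mathfrak B_b$. For each fixed $s^n\in\Q^{<\omega}$ only finitely many summands contribute to the corresponding basis-combination norm, and this norm is a continuous function of the finitely many relevant parameters $p_i$; hence the coding map $\Theta\colon \prod_i[0,b(i)]\to\mathfrak B_b$ is continuous, and in particular Borel.

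By Theorem~\ref{cond}, $S_{\vec p(\vec y)}$ is uniformly homeomorphic to $S_{\vec p(\vec y\,')}$ if and only if there exists $C\ge 1$ such that $n_i^{|1/p_i(\vec y)-1/p_i(\vec y\,')|}\le C$ for all $i$; taking logarithms and applying the defining identity of $\vec p(\vec y)$, this is equivalent to $|y(i)-y'(i)|\le\log C$ for all $i$, i.e.\ to $\vec y\, E_{\ell_\infty}^{\vec b}\,\vec y\,'$. Thus $\Theta$ is a Borel reduction of $E_{\ell_\infty}^{\vec b}$ to the uniform homeomorphism relation on $\mathfrak B_b$. Precomposing with the Borel reduction $E_{\ell_\infty}\le_B E_{\ell_\infty}^{\vec b}$ supplied by Lemma~\ref{emba} establishes the statement for $\mathfrak B_b$. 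The case of $\mathfrak B$ follows by postcomposing with the Borel map $\Phi\colon\mathfrak B_b\to\mathfrak B$ from Section~\ref{sec:PreB}, which sends every coded Banach space with basis to a linearly isometric subspace of $C[0,1]$ and therefore preserves uniform homeomorphism classes.

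The only point that takes any real care is verifying that $\Theta$ actually lands inside $\mathfrak B_b$ and is Borel, and this reduces to two routine observations: the $\ell_p^n$-norm of a fixed vector in $\Q^n$ is jointly continuous in $p$, and the natural basis of a finite or countable $\ell_2$-direct sum of $\ell_p^n$-spaces is monotone. Everything else is a direct application of Theorem~\ref{cond} and Lemma~\ref{emba}.
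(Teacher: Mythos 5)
Your proof is essentially the paper's proof: both reduce $E_{\ell_\infty}$ to $E_{\ell_\infty}^{\vec b}$ via Lemma~\ref{emba} and then realize $E_{\ell_\infty}^{\vec b}$ via the family $S_{\vec p}$ and Theorem~\ref{cond}. The one genuine (if small) difference is in how the parameter is threaded into the $p_i$'s. The paper fixes $\vec b$ first, builds $n_i=2^{b(i)2^i}$ and intervals $I_i$ with $|I_i|=2^{-i-1}$, and uses an affine bijection $\sigma_i\colon[0,b(i)]\to I_i$ in the $p$-variable; this forces a final block of estimates to pass between $|\sigma_i(x_1(i))-\sigma_i(x_2(i))|$ and $|x_1(i)-x_2(i)|$ (bounding $1/(\sigma_i(x_1(i))\sigma_i(x_2(i)))$ and comparing $\log n_i$ with $b(i)2^i$ up to a factor of $2$). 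You instead parametrize \emph{linearly in $1/p_i$}, setting $1/p_i(\vec y)=1/r_i+y(i)/\log n_i$ and defining $b(i)$ to make this land exactly in $I_i$, which makes $\log n_i\cdot|1/p_i(\vec y)-1/p_i(\vec y')|=|y(i)-y'(i)|$ an identity and eliminates the comparison constants. That is a genuine tidying-up, though the overall architecture is identical.

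Two small points to watch. First, your parenthetical ``for instance with $l_i,r_i\to 1$ from above'' contradicts the intended meaning of ``successive'' in Theorem~\ref{cond}: its proof requires the intervals to increase (i.e.\ $l_{i+1}>r_i$, cf.\ the step $q_{i_0-1}<p<q_{i_0}$), so they move toward $2$, not toward $1$. Since it is only an illustrative aside this does not damage the argument, but the example should be replaced (say, by any sequence of disjoint intervals increasing to $2$ with summable lengths, as in the paper). Second, your sentence that the strict increase of $b(i)$ ``is possible since (\ref{n_i}) already forces $\sup_i b(i)=\infty$'' is a non sequitur as stated; what you want to say is that, for any fixed $(I_i)$, the condition (\ref{n_i}) only bounds $n_{i+1}$ from below, so one may inductively inflate $n_i$ to make $b(i)$ strictly increasing and unbounded (and the $n_i$ increasing) while keeping (\ref{n_i}). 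With those two wording fixes the proof is correct and matches the paper's route.
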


\begin{proof} By Lemma~\ref{emba} it suffices to define a
Borel reduction from $E_{\ell_\infty}^{\vec{b}}$ (for some
$\vec{b}\in B$) to the uniform homeomorphism relation for Banach
spaces with a basis. To construct the Banach spaces we use the proof
of Theorem~\ref{cond}. For this fix $\vec{b}\in B \cap
\omega^\omega$ with $b(0)>0$. For all $i\in\omega$ put
$$ \delta_i=\displaystyle\frac{1}{b(i)2^i}\ \mbox{ and }\
n_i= 2^{\frac{1}{\delta_i}}=2^{b(i) 2^i}. $$ Let $I_i$ be a sequence
of successive intervals in $(1,2)$ with $|I_i|=2^{-i-1}$. Assume
$I_i=[l_i,r_i]$. Since $n_i^2 \leq n_{i+1}$, equation~(\ref{n_i}) is
satisfied. The sequences $(n_i)$ and $(I_i)$ will be used as in the
proof Theorem~\ref{cond}. Let $\sigma_i$ be the affine bijection
between $[0,b(i)]$ and $I_i$. For $\vec x \in
\prod_{n\in\omega}[0,b(n)]$, define $\rho(\vec x) \in \R^\omega$ by
$\rho(\vec x)(i)=\sigma_i(x(i))$. Finally, define $\pi(\vec x)=
S_{\rho(\vec x)}$. So, for all $\vec x \in
\prod_{n\in\omega}[0,b(n)]$, $\pi(\vec x)$ is a separable Banach
space with a basis.

We show that $\pi$ is the desired reduction. It is straightforward
to check that $\pi$ is Borel as a map from
$\prod_{n\in\omega}[0,b(n)]$ to ${\mathfrak B}_b$. Granting that
$\pi$ is a reduction, then composed with the Borel map $\Phi:
{\mathfrak B}_b\to {\mathfrak B}$, it would be a reduction to the
uniform homeomorphism relation on ${\mathfrak B}$.

To verify that $\pi$ is a reduction, consider $\vec x_1, \vec
x_2\in\prod_{n\in\omega}[0,b(n)]$. From Theorem~\ref{cond} we have
that $S_{\rho(\vec x_1)}$ and  $S_{\rho(\vec x_2)}$ are uniformly
homeomorphic iff
$$\exists C>0\ \forall i\in\omega\ \ n_i^{\left|\frac{1}{\rho(\vec x_1)(i)}-
\frac{1}{\rho(\vec x_1)(i)}\right|} <C.$$ By taking logarithm we get
that this is equivalent to
$$ \exists D>0\ \forall i\in\omega\ \ \left|\frac{\log(n_i)}{\rho(\vec x_1)(i)}
-\frac{\log(n_i)}{\rho(\vec x_2)(i)}\right|<D.$$ Using the
definition of $\rho$ we get that the inequality is equivalent to
$$\log(n_i) |\sigma_i(\vec x_1(i))- \sigma_i(\vec x_2(i))| \frac{1}{\sigma_i(\vec x_1(i)) \cdot
\sigma_i(\vec x_2(i))} <D.$$
Since $\sigma_i(\vec x_1(i)) \in (1,2)$ and likewise for
$\vec x_2$, the statement is thus equivalent to
$$\exists D>0\ \forall i\in\omega\ \ \log(n_i) |\sigma_i(\vec x_1)(i)-
\sigma_i(\vec x_2)(i)| <D.$$ By the linearity of $\sigma_i$, we in
fact have
$$|\sigma_i(\vec x_1(i))- \sigma_i(\vec x_2(i))|=
\frac{|x_1(i)-x_2(i)|}{b(i) \cdot 2^{i+1}}.$$ Finally, our choice of
$(n_i)$ guarantees that
$$\frac{1}{2} \leq \left|\frac{\log(n_i)}{b(i) \cdot 2^i}\right| \leq 2. $$
Therefore, the statement is eventually equivalent to $\exists D>0\
\forall i\in\omega\ |\vec x_1(i) -\vec x_2(i)| <D$, that is, $\vec
x_1 E_{\ell_\infty}^{\vec b} \vec x_2$.
\end{proof}

Theorem~\ref{thm:main1} is now a direct corollary of the above
proof. In particular we have the following corollary.

\begin{thm} \label{thm:lbloce} The equivalence relation $\ell_\infty$ is
Borel reducible to the local equivalence on either ${\mathfrak B}_b$
or ${\mathfrak B}$, that is, $\ell_\infty\leq_B\ \loce$.
\end{thm}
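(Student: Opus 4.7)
The plan is to show that the very same reduction $\pi$ constructed in the proof of Theorem~\ref{embthm} already witnesses the Borel reducibility $\ell_\infty \leq_B {\loce}$. Since Theorem~\ref{embthm} reduces $E_{\ell_\infty}^{\vec b}$ to uniform homeomorphism, and Theorem~\ref{thm:Ribe} (Ribe) gives uniform homeomorphism $\Rightarrow$ local equivalence, it suffices to verify that $\pi$ is a reduction to $\loce$ when we re-examine both implications in the proof of Theorem~\ref{cond}.

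For the forward direction, suppose $\vec x_1 \, E_{\ell_\infty}^{\vec b}\, \vec x_2$. The first paragraph of the proof of Theorem~\ref{cond} produces, under the quantitative condition $\sup_i n_i^{|1/\rho(\vec x_1)(i) - 1/\rho(\vec x_2)(i)|} < \infty$, an explicit uniform $C$-equivalence of the unit vector bases giving a linear isomorphism $S_{\rho(\vec x_1)} \to S_{\rho(\vec x_2)}$ of bounded norm. Linear isomorphism trivially implies local equivalence (any such isomorphism sends finite-dimensional subspaces to finite-dimensional subspaces with Banach--Mazur distance bounded by $\|T\|\|T^{-1}\|$), so $S_{\rho(\vec x_1)} \loce S_{\rho(\vec x_2)}$.

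The crux is the reverse direction, and here the observation is that the second half of the proof of Theorem~\ref{cond} actually establishes a stronger statement than the one claimed: it shows that \emph{local equivalence} of $S_{\vec p}$ and $S_{\vec q}$ already forces the quantitative condition $\sup_i n_i^{|1/p_i-1/q_i|} < \infty$. Indeed, Ribe's theorem is invoked there only to pass from uniform homeomorphism to the existence of a uniform constant $C$ such that each $\ell_{p_i}^{n_i} \subseteq S_{\vec p}$ admits a linear embedding into some finite-dimensional subspace of $S_{\vec q}$ with $\|T\|\|T^{-1}\| \leq C$; but this conclusion is precisely the hypothesis of local equivalence. The subsequent type-constant computation, culminating in $\|T\|\|T^{-1}\| \geq n^{1/p-1/q}/(\sqrt 2 c)$, is then run verbatim.

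Putting these two observations together, the chain of equivalences at the end of the proof of Theorem~\ref{embthm} now reads: $S_{\rho(\vec x_1)} \loce S_{\rho(\vec x_2)}$ iff $\sup_i n_i^{|1/\rho(\vec x_1)(i) - 1/\rho(\vec x_2)(i)|} < \infty$, iff (by the same logarithmic manipulation and the choices of $\sigma_i$ and $n_i$) $\vec x_1 \, E_{\ell_\infty}^{\vec b}\, \vec x_2$. Composing with the Borel map $\Phi\colon \mathfrak{B}_b \to \mathfrak{B}$ from Section~\ref{sec:PreB} gives the result on $\mathfrak{B}$ as well, noting that local equivalence is preserved under linear isometry. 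The main (and only) conceptual step is recognizing that the proof of Theorem~\ref{cond} never really uses uniform homeomorphism beyond its consequence of local equivalence, so the same reduction handles both theorems simultaneously.
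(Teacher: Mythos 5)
Your proof is correct and follows the same route the paper intends: the paper derives Theorem~\ref{thm:lbloce} as a direct corollary of the proof of Theorem~\ref{embthm}, and your write-up simply makes explicit the two observations that justify this, namely that the forward direction of Theorem~\ref{cond} produces a genuine linear isomorphism (hence local equivalence), and that the reverse direction invokes Ribe only to obtain uniformly bounded embedding constants, which is exactly the hypothesis of local equivalence, so the type-constant computation already refutes $\loce$ directly. You have correctly identified that the chain isomorphic $\Rightarrow$ uniformly homeomorphic $\Rightarrow$ locally equivalent $\Rightarrow$ (bounded exponent) closes up, which is precisely the content of Theorem~\ref{thm:main1}.
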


This gives a half of Theorem~\ref{thm:main2}. Next we prove
Theorem~\ref{thm:main2} by showing the reverse reduction. We will
use the following concept and lemma.

\begin{defn}
For $X=(X,d)$ a Polish metric space, let $F_X$ be the equivalence
relation on $X^\omega$ defined by
$$
\vec x \, F_X \, \vec y \iff \exists C>0 \ [\,\forall i\ \exists j\ d(x(i),y(j))<C
\wedge \forall i\ \exists j\ d(y(i),x(j)) <C\,].
$$
\end{defn}

\begin{lem} \label{fx}
For every Polish metric space $(X,d)$,
$F_X \leq_B \ell_\infty$.
\end{lem}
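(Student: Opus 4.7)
The plan is to fix a countable dense subset $\{d_n : n\in\omega\}$ of the Polish metric space $X$ and send each sequence $\vec{x}\in X^\omega$ to its \emph{distance profile} to this dense set. Concretely, I will define $\phi\colon X^\omega \to \R^\omega$ by
$$\phi(\vec{x})(n) \;=\; \inf\bigl\{\,d(x(i),d_n)\,:\,i\in\omega\,\bigr\}.$$
Each coordinate of $\phi$ is a countable infimum of continuous functions of $\vec{x}$, so $\phi$ is Borel (in fact upper semicontinuous in each coordinate), and the image lies in $[0,\infty)^\omega\subseteq\R^\omega$. I claim $\phi$ is the desired Borel reduction from $F_X$ to $E_{\ell_\infty}$.

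For the forward direction, suppose $\vec{x}\,F_X\,\vec{y}$ with witness $C>0$. Fix $n\in\omega$ and $i\in\omega$, and pick $j$ with $d(x(i),y(j))<C$. The triangle inequality yields
$$\phi(\vec{y})(n) \;\leq\; d(y(j),d_n) \;\leq\; d(x(i),d_n)+C.$$
Taking the infimum over $i$ gives $\phi(\vec{y})(n)\leq \phi(\vec{x})(n)+C$, and symmetrically the other inequality, so $|\phi(\vec{x})(n)-\phi(\vec{y})(n)|\leq C$ for every $n$. Hence $\phi(\vec{x})-\phi(\vec{y})\in\ell_\infty$.

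For the reverse direction, suppose $|\phi(\vec{x})(n)-\phi(\vec{y})(n)|\leq C$ for all $n$, and fix any index $i$. Using density of $\{d_n\}$, pick $n$ with $d(d_n,x(i))<1$. Then $\phi(\vec{x})(n)<1$, so $\phi(\vec{y})(n)<C+1$, and therefore there exists $j$ with $d(y(j),d_n)<C+2$; the triangle inequality then yields $d(x(i),y(j))<C+3$. The symmetric argument handles elements of $\vec{y}$, giving $\vec{x}\,F_X\,\vec{y}$ with witness $C+3$.

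There is no substantive obstacle here; the proof reduces to a triangle-inequality estimate combined with density of $\{d_n\}$. The only mild care needed is to keep track of the additive slack in the reverse direction (using $d(d_n,x(i))<1$ rather than trying to make the infima equal), which accounts for the shift from constant $C$ to constant $C+3$ but is harmless since only \emph{existence} of a constant is required by the definition of $F_X$ and of $E_{\ell_\infty}$.
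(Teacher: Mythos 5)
Your proof is correct and takes essentially the same approach as the paper's: both send $\vec{x}$ to the sequence of distances from a fixed countable ``probing set'' to the set $\{x(0),x(1),\dots\}$, then use the triangle inequality to control these distances. The only cosmetic differences are that the paper uses a $1$-net in place of an arbitrary countable dense set, and argues the reverse direction via the contrapositive rather than directly.
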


\begin{proof}
Fix a $1$-net $R=\{ r_0, r_1, \dots\}$ in $X$. We define $\pi:
X^\omega \to \R^\omega$ by
$$ \pi(\vec{x})(i)=d(r_i, \{ x(0), x(1), \dots\}). $$
It is easy to check that $\pi$ is a Borel function. We verify that
it is a reduction from $F_X$ to $E_{\ell_{\infty}}$. Suppose $\vec x
\, F_X \, \vec y$, and let $C>0$ be a witness. For any $z \in X$, if
$\delta =d(z, \{ x(0), x(1), \dots\})$, then $d(z, \{ y(0),
y(1),\dots\}) \leq \delta +C$. So, $|d(z, \{ x(0), x(1),\dots\})
-d(z,\{ y(0), y(1),\dots\}) | \leq C$. Thus, $\pi(\vec x) \,
E_{\ell_\infty} \, \pi(\vec y)$.

Conversely, suppose $\vec x$ is not $F_X$-equivalent to $\vec y$.
Let $C>0$ be arbitrary. Then there is a $k$ such that $d(x(k), \{
y(0), y(1),\dots\})
>C$ or $d(y(k), \{ x(0), x(1),\dots\})>C$.
Without loss of generality, assume the former.
Let $i$ be such that $d(x(k),r_i)<1$. Then
$\pi(\vec x)(i)<1$, but $\pi(\vec y)(i)> C-1$.
So $\pi(\vec x)$ is not $E_{\ell_\infty}$-equivalent to $\pi(\vec y)$.
\end{proof}

\begin{thm} \label{erthm} The local equivalence on either
${\mathfrak B}$ or ${\mathfrak B}_b$ is Borel reducible to the
equivalence relation $\ell_\infty$, that is, $\loce\, \leq_B
\ell_\infty$.
\end{thm}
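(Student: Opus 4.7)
The plan is to reduce $\loce$ to $F_\mathcal{M}$ for a suitable Polish metric space $\mathcal{M}$ whose points represent finite-dimensional Banach spaces, and then invoke Lemma~\ref{fx} to obtain $F_\mathcal{M} \leq_B \ell_\infty$. Composing these Borel reductions yields $\loce \leq_B \ell_\infty$.

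First I would construct $\mathcal{M}$. For each $n \geq 1$, let $\mathcal{M}_n$ denote the Minkowski compactum of isometry classes of $n$-dimensional Banach spaces, equipped with the compatible metric $\log d_{BM}$; this is a compact Polish metric space on which the metric is bounded by $\log n$ (John's theorem). Let $\mathcal{M} = \bigsqcup_n \mathcal{M}_n$, equipped with the metric $d$ extending $\log d_{BM}$ inside each $\mathcal{M}_n$ and setting $d(E, F) = n + m$ for $E \in \mathcal{M}_n$, $F \in \mathcal{M}_m$ with $n \neq m$. The triangle inequality is easily verified using John's bound, and $\mathcal{M}$ is a Polish metric space in which different-dimensional components are uniformly bounded apart in a dimension-dependent way.

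Next I would define a Borel map $g \colon \mathfrak{B}_b \to \mathcal{M}^\omega$: using the notation of Section~\ref{sec:PreB}, for each tuple $\vec{n} \in I$ and each $X \in \mathfrak{B}_b$, let $g(X)_{\vec{n}}$ be the isometry class in $\mathcal{M}_{|\vec{n}|}$ of the finite-dimensional subspace $X_{\vec{n}}$. The norm function of $X_{\vec{n}}$ depends continuously on the coordinates of $X$, and its passage to the Minkowski compactum is Borel. I would then verify that $X \loce Y$ iff $g(X) \mathrel{F_\mathcal{M}} g(Y)$. For the forward direction, each $X_{\vec{n}}$ admits a same-dimensional match $F \subseteq Y$ with bounded $d_{BM}$; since $\{Y_{\vec{m}} : \vec{m} \in I\}$ is BM-dense in the family of finite-dim subspaces of $Y$, $F$ is BM-approximated by some $Y_{\vec{m}}$, giving the required $\mathcal{M}$-bound. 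For the converse, given any finite-dim subspace $E \subseteq X$ of dimension $n$, BM-approximate $E$ by some $X_{\vec{n}}$ and use the $F_\mathcal{M}$-matching $Y_{\vec{m}}$; the cross-dimensional term $n+m$ forces the match to be dimension-preserving once $n$ exceeds the $F_\mathcal{M}$-constant, while for small $n$ John's theorem automatically gives a bounded $d_{BM}$ from $E$ to any $n$-dimensional subspace of $Y$.

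Finally, Lemma~\ref{fx} applied to $(\mathcal{M}, d)$ yields $F_\mathcal{M} \leq_B \ell_\infty$, completing the proof on $\mathfrak{B}_b$. The case of $\mathfrak{B}$ proceeds analogously once one selects, in a Borel way, a countable dense sequence in each $X \in \mathfrak{B}$ (via Kuratowski--Ryll-Nardzewski applied to the Effros Borel structure) and enumerates its finite-dim subspaces as spans of rational combinations of this sequence. The main obstacle is confirming the Borel measurability of $g$: identifying a norm on $\R^{|\vec{n}|}$ with its Minkowski compactum class involves the $GL(|\vec{n}|)$-quotient, but this can be handled either by observing that the quotient topology making $\mathcal{M}_k$ a Polish metric space renders the quotient map continuous, or by working directly with an explicit Polish pseudo-metric space of norms and checking that the proof of Lemma~\ref{fx} extends routinely to that setting.
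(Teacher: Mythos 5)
Your overall architecture matches the paper's: encode each space as the sequence of its ``rational'' finite-dimensional subspaces inside a fixed Polish metric space of finite-dimensional spaces, show $\loce$ coincides with the equivalence relation $F_{\mathcal M}$ on that sequence space, and then invoke Lemma~\ref{fx}. The substantive difference is the metric on the space of finite-dimensional spaces, and there your version is actually the one that works. The paper puts a metric $\rho$ on finite-dimensional spaces-with-bases by taking $\rho_n(X,Y)=\max\{\log\|T\|,\log\|T^{-1}\|\}$ for the basis-to-basis map, \emph{truncating each $\rho_n$ to be $\le 1$}, and declaring cross-dimensional distance $1$; this makes $\rho$ bounded by $1$, but $F_{\mathcal M}$ is the trivial (all-pairs) relation whenever the ambient metric is bounded, so $F_{\bar{\sF}}$ as defined in the paper cannot reduce $\loce$ and the claimed equivalence ``$X\loce Y$ iff $\pi(X)\,F_{\bar\sF}\,\pi(Y)$'' fails. (Removing the truncation is not an immediate fix either, since $\rho_n$ is unbounded within each dimension, which breaks the triangle inequality against the cross-dimension constant $1$.) Your construction repairs exactly this point: using the Minkowski compacta $\mathcal M_n$ with $\log d_{BM}$, which John's theorem bounds by $\log n$ within each dimension, together with the cross-dimensional distance $n+m$, gives a genuine unbounded Polish metric, and John's bound is precisely what makes the triangle inequality close; the growing cross-dimensional cost then correctly forces the $F_{\mathcal M}$-matching to be dimension-preserving above the $F_{\mathcal M}$-constant, while small dimensions are handled automatically by John. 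The one respect in which the paper's version is lighter is that working with spaces-with-bases (rather than isometry classes) makes Borel-ness of the coding map immediate; you flag this honestly, and the workaround you suggest---run Lemma~\ref{fx} over a Polish \emph{pseudo}-metric space of norms rather than the Minkowski quotient---is fine, since the lemma's proof uses only a countable $1$-net and never injectivity of the metric. In short: same route, but your metric construction is the correct one.
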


\begin{proof}
Let $\sF$ be the collection of finite dimensional Banach spaces
(presented with bases). The following distance function is a
separable metric on $\sF$. If two spaces $(X,(x_1,\dots, x_n))$ and
$(Y,(y_1,\dots, y_n))$ are both $n$-dimensional, let
$\rho_n(X,Y)=\max \{ \log (\norm T\norm), \log(\norm T^{-1}
\norm)\}$, where $T:X\to Y$ is the linear isomorphism sending $x_i$
to $y_i$. By truncating  we may assume each $\rho_n \leq 1$, and we
may then put together the $\rho_n$ to obtain a metric on $\sF$ (if
$\dim(X) \neq \dim(Y)$ we set $\rho(X,Y)=1$). Let $(\bar{\sF},\rho)$
be the Polish space obtained by completing $\rho$.

First consider $\loce$ as an equivalence relation on ${\mathfrak
B}_b$. Given a separable Banach space $X$ with basis $(x_i)$, we
define $\pi(X) \in \bar{\sF}^\omega$ as follows. Let
$\chi_1,\chi_2,\dots$ enumerate $(\Q^{< \omega})^{< \omega}$.
Suppose $\chi_i=(\vec q_1, \dots, \vec q_k)$. Then let $\pi(X)(i)$
be the $k$-dimensional subspace of $X$ with basis  $(e_1, \dots,
e_k)$, where $e_i=\sum \vec q_i(j) x_j$ for all $1\leq i\leq k$. It
is routine to check that $\pi$ is a Borel function. Note that every
finite-dimensional subspace of $X$ is approximated arbitrarily
closely in the $\rho$ metric by a term of the sequence $\pi(X)$. It
is then straightforward from the definition of the local equivalence
that $X \loce  Y$ iff $\pi(X)\ F_{\bar \sF}\ \pi(Y)$. Thus, $\pi$ is
a reduction from $\loce$ to $E_{\bar \sF}$. We are done by
Lemma~\ref{fx}.

We modify the above argument to work for $\loce$ as an equivalence
relation on ${\mathfrak B}$. Let $d$ be the metric on $C[0,1]$ given
by the norm. Let $D \subseteq C[0,1]$ be countable dense. Fix an
enumeration of $D^{<\omega} \times \omega$ as $(s_0,n_0), (s_1,n_1),
\dots$. Fix a Borel function $\sigma \colon F(C[0,1]) \to C[0,1])$
such that $\sigma(F) \in F$ for all nonempty $F \in F(C[0,1])$ (see
Theorem 12.13, \cite{Ke}). For $x \in C[0,1]$, $F \in F(C[0,1])$,
and $n \in \omega$, let
$$\sigma_n(x,F)=\left\{\begin{array}{ll}
\sigma(F \cap \{u\in C[0,1]\,:\, d(x,u)\leq \frac{1}{n}\}),
& \mbox{if $F \cap \{u\in C[0,1]\,:\, d(x,u)\leq \frac{1}{n}\}\neq\emptyset$,} \\
\sigma(F), & \mbox{otherwise.}
\end{array}\right.
$$
Given a separable Banach space $X\in {\mathfrak B}$, let
$\pi(X)(i)\in \bar\sF$ code the finite-dimensional subspace of $X$
with basis $(\sigma_{n_i}(s_i(0), F), \dots,
\sigma_{n_i}(s_i(|s_i|-1),F))$. Since $D$ is dense, every
finite-dimensional subspace of $X$ is approximated arbitrarily
closely by spaces of the form $\pi(F)(i)$.  Thus, $\pi$ is a Borel
reduction from $\loce$ to $E_{\bar \sF}$.
\end{proof}

Theorem~\ref{thm:main2} is immediate from Theorems~\ref{thm:lbloce}
and \ref{erthm}. To summarize, we have shown that the local
equivalence between separable Banach spaces has the same complexity
as $\ell_\infty$, and the uniform homeomorphism relation is at least
as complex as $\ell_\infty$. Thus we have obtained the sharpest
result possible for the uniform classification by considering the
local structures of Banach spaces alone.

The equivalence relation $F_X$ we used in the above proof is sort of
a generalization of the $\ell_\infty$ equivalence relation on the
space of countable subsets of a Polish metric space equipped with
the Hausdorff metric. In the remainder of this section we consider a
full generalization of $\ell_\infty$ to arbitrary Polish metric
spaces and characterize its complexity.

\begin{defn}\label{proder} Let $X=(X,d)$ be a Polish metric space.
The equivalence relation $E_{\ell_\infty(X)}$, or simply
$\ell_\infty(X)$, on $X^\omega$ is defined as
$$ \vec{x}E_{\ell_\infty(X)}\vec{y}\iff \exists C>0\ \forall i\in\omega\
 d(x(i),y(i))<C. $$
\end{defn}

We have the following dichotomy for the complexity of
$\ell_\infty(X)$ in the Borel reducibility hierarchy for any Polish
metric space $X$.

\begin{thm} \label{thm:linfX}
Let $X=(X,d)$ be a Polish metric space with $d$ unbounded.
Then $\ell_\infty(X)$ is Borel bireducible with either $\ell_\infty$ or $E_1$.
\end{thm}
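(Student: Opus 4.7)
The plan is to sandwich $\ell_\infty(X)$ between $E_1$ and $\ell_\infty$ in general, and then split into two structural cases on $X$ that force $\ell_\infty(X)$ to attain one of the two extremes. For the upper bound $\ell_\infty(X)\leq_B\ell_\infty$ I would use the Borel map $\pi(\vec x)(\langle i,j\rangle):=d(r_j,x(i))$, where $(r_j)$ enumerates a countable dense subset of $X$: the reverse triangle inequality gives one direction, and for the other, if $|d(r_j,x(i))-d(r_j,y(i))|\leq C$ for all $j$, then taking $r_{j_k}\to y(i)$ yields $d(x(i),y(i))\leq C$. For the lower bound $E_1\leq_B\ell_\infty(X)$ I would use unboundedness of $d$ to choose $(a_n,b_n)\in X^2$ with $d(a_n,b_n)\geq 2^n$, and define $\pi(\vec X)(\langle n,k\rangle)$ to be $a_n$ or $b_n$ according to $X(n)(k)\in\{0,1\}$; the widening separations $2^n$ force $X(n)=Y(n)$ for $n$ large, while discrepancies at finitely many small $n$ contribute only a bounded amount to $\sup d$.

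The dichotomy would be driven by
\[
\delta_C:=\sup\{\operatorname{diam}(K):K\text{ a $C$-coarse component of }X\},
\]
where $x,y\in X$ lie in a common $C$-coarse component iff joined by a finite chain $x=z_0,\dots,z_L=y$ with $d(z_l,z_{l+1})\leq C$. In Case A ($\delta_C<\infty$ for every $C>0$), the diameter control yields the key reformulation: $\vec x\,\ell_\infty(X)\,\vec y$ iff for some $C$ the points $x(i),y(i)$ share a common $C$-coarse component for every $i$. Fixing a countable dense $D=(d_j)\subseteq X$, setting $j(x):=\min\{j:d(x,d_j)\leq 1\}$, and letting $\sim^N$ denote the Borel equivalence relation on $\mathbb N$ of joinability through a $D$-chain with step $\leq N$, I would set $\Phi(\vec x)(N)(i):=[j(x(i))]_{\sim^N}$. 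The partition $\sim^N$ coarsens as $N$ grows, and together with the Case A bound this gives $\Phi(\vec x)\,E_1\,\Phi(\vec y)\iff\vec x\,\ell_\infty(X)\,\vec y$, producing the desired reduction $\ell_\infty(X)\leq_B E_1$.

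In Case B ($\delta_{C_0}=\infty$ for some $C_0$), the goal is a Borel reduction $\ell_\infty\leq_B\ell_\infty(X)$ using chains of increasing diameter. For each $N$ I would fix a $C_0$-coarse component $K_N$ with $\operatorname{diam}(K_N)\geq N$, a basepoint $a_N\in K_N$, and a $C_0$-chain $z^N_0=a_N,z^N_1,\dots,z^N_{L_N}$ of minimum length reaching some $z^N_{L_N}$ with $d(a_N,z^N_{L_N})\geq N$; the reduction would be $\phi(\vec n)(\langle k,N\rangle):=z^N_{\min(n_k,L_N)}$. The Lipschitz upper bound $d(z^N_i,z^N_j)\leq C_0|i-j|$ is immediate from the chain and handles the direction $\sup_k|n_k-m_k|<\infty\Rightarrow\sup d<\infty$. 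The converse direction requires the lower estimate $d(z^N_i,z^N_j)\to\infty$ as $|i-j|\to\infty$ (uniformly in $N$), which is the technical heart of the case and the main obstacle: in a general Polish metric space nothing a priori rules out a minimum chain having distant chain-indices that happen to be metrically close. The plan here is to exploit minimality of the chain through a shortcut argument (a $C_0$-chain from $z^N_i$ to $z^N_j$ of length strictly less than $|i-j|$ could be spliced into the global chain, contradicting minimality), forcing the $C_0$-graph distance and hence $d(z^N_i,z^N_j)$ to grow with $|i-j|$; extracting a clean quantitative growth bound in full generality is delicate and may require refining $K_N$ or passing to an auxiliary quasi-geodesic within it, and is the step in the argument that requires the most care.
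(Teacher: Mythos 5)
Your upper bound $\ell_\infty(X)\leq_B\ell_\infty$, lower bound $E_1\leq_B\ell_\infty(X)$, the dichotomy criterion via bounded versus unbounded $C$-component diameters, and the Case~A reduction to $E_1$ all run parallel to the paper's proof (the paper works with a countable $1$-net rather than a dense set, which is why it can reduce at once to the countable case, but this is cosmetic). The gap is exactly where you flag it: the Case~B reduction $\ell_\infty\leq_B\ell_\infty(X)$.

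Your plan commits, for each scale $N$, to a single minimum-length $C_0$-chain $z^N_0,\dots,z^N_{L_N}$ and needs the metric distance $d(z^N_i,z^N_j)$ to grow with $|i-j|$, uniformly in $N$. Minimality does give growth of the \emph{graph} distance in the $C_0$-graph (any shortcut would splice to a shorter chain), and it also forces $d(z^N_i,z^N_j)\geq C_0$ for $|i-j|\geq 2$; but neither of these bounds $d(z^N_i,z^N_j)$ from below by an unbounded function of $|i-j|$. One can build a Polish space where, for each $N$, the $C_0$-component is a chain with $d(z^N_i,z^N_j)=C_0+|i-j|\,\epsilon_N$ for $|i-j|\geq 2$ (a legitimate metric when $\epsilon_N$ is small), $L_N\epsilon_N\geq N$, and $L_N$ growing so fast that $\epsilon_N\to 0$ much faster than $1/L_N$. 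Then for $n_k=L_k+1$ and $m_k=L_k+2+k$ one has $|n_k-m_k|\to\infty$ while your truncated images $z^N_{\min(n_k,L_N)}$, $z^N_{\min(m_k,L_N)}$ stay within distance roughly $C_0$ for every $N$ (they coincide when $N\leq k$ and are $\approx C_0$ apart when $N>k$), so the map is not a reduction. The ``shortcut'' argument you sketch cannot repair this, because it controls graph distance only, and a Polish metric space can have points close in $d$ yet far in the $C_0$-graph.

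The paper avoids this entirely with a different idea: enumerate \emph{all} $C_0$-paths $p_0,p_1,\dots$, extend each $p=(y_0,\dots,y_n)$ by $p(i)=y_i$ for $i\leq n$ and $p(i)=y_n$ for $i>n$, and set $\pi(x)_{\langle j,i\rangle}=p_j(x(i))$. The Lipschitz bound $d(p(i),p(j))\leq C_0|i-j|$ gives one direction. For the other, given $\vec x\not\sim_{\ell_\infty}\vec x'$ and any $k$, one picks a $C_0$-component of diameter $>k$, endpoints $z,w$ with $d(z,w)>k$, a $C_0$-path $p$ of some length $n$ from $z$ to $w$, and then a coordinate $i$ with $|x(i)-x'(i)|>n$; padding $p$ (wait at $z$, traverse $p$, wait at $w$) produces an enumerated path $q_j$ with $q_j(x(i))=z$ and $q_j(x'(i))=w$, so $d(\pi(x)_{\langle j,i\rangle},\pi(x')_{\langle j,i\rangle})>k$. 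The key is that the reduction searches over \emph{all} paths in the $j$-coordinate, so it only needs the endpoints of some path to be far apart, never a quantitative growth of intermediate distances along one fixed chain. This is the missing idea that makes Case~B go through.
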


\begin{proof}
We first reduce $\ell_\infty(X)$ to $\ell_\infty$. Fix a countable
{\it $1$-net} $R=\{ r_0, r_1, \dots\}$ in $X$, that is, $R \subseteq
X$ with $d(r_i,r_j) > 1$ for all $i \neq j$, and $\forall x \in X\
\exists i\in\omega\  d(x,r_i) \leq 1$. This can be done in any
separable metric space. Define $\pi: X^\omega\to \R^\omega$ by
$$\pi(\vec x)(\langle i,j\rangle)=d(x(i),r_j) $$
for any $i,j\in\omega$. Then $\pi$ is continuous, in particular
Borel. Let $\vec{x}, \vec{y}\in X^\omega$. If $\vec x \,
E_{\ell_\infty(X)} \, \vec y$, then let $C>0$ be such that
$d(x(i),y(i)) <C$ for all $i\in\omega$. Then it follows that for any
$j\in\omega$, $|d(x(i),r_j)-d(y(i),r_j)| <C$. So $\pi(\vec x) \,
E_{\ell_\infty} \, \pi(\vec y)$. Conversely, if $\pi(\vec x) \,
E_{\ell_\infty} \, \pi(\vec y)$, then for some $C>0$ we have that
$\forall i\ \forall j\ |d(x(i),r_j)-d(y(i),r_j)| <C$. It we take $j$
so that $d(x(i), r_j) \leq 1$, then this implies that
$d(x(i),y(i))<C+2$, and so $\vec x \, E_{\ell_\infty(X)} \, \vec y$.

%The map $\vec x=(x(0),x(1),\dots) \mapsto (\lfloor x(0) \rfloor,
%\lfloor x(1) \rfloor, \dots)$ is a reduction of
%$\ell_\infty(\R^+)$ to $\ell_\infty(\N)$, so this gives a reduction
%of $\ell_\infty(X)$ to $\ell_\infty(N)$.

Next we reduce $E_1$ to $\ell_\infty(X)$. Since $\R$ is Borel
isomorphic to both $2^\omega$ and $\omega^\omega$, we may work with $E_1$ defined on
either $(2^\omega)^\omega$ or $(\omega^\omega)^\omega$, whichever is more convenient. Fix a sequence $(z_n)\in X^\omega$ with $\lim_n
d(z_0,z_n)=\infty$. Define $\tau: (2^\omega)^\omega\to X^\omega$ by
$$ \tau(\vec{x})(\langle i,j\rangle)=\left\{ \begin{array}{ll}
z_i, & \mbox{if $x_i(j)=1$,} \\
z_0, & \mbox{otherwise,}
\end{array}\right.
$$
for all $i,j\in\omega$. Again $\tau$ is continuous, hence Borel.
Given $\vec x, \vec x'\in (2^\omega)^\omega$ and $n\in\omega$, we
have that $\forall k \geq n \ x_k=x'_k$ iff $\tau(\vec{x})$,
$\tau(\vec{x}')$ only disagree where they take values in $\{z_0,
\dots, z_{n-1}\}$. This implies that $\vec x E_1 \vec x'$ iff
$\tau(\vec{x})E_{\ell_\infty(X)}\tau(\vec{x}')$.

We have shown so far that $E_1 \leq_B \ell_\infty(X) \leq_B
\ell_\infty$. If $Y$ is a $1$-net in $X$, then clearly
$\ell_\infty(X)$ is bireducible with $\ell_\infty(Y)$. So, without
loss of generality we may assume that $X$ is countable. For every
positive $C$, let $\sim_C$ be the equivalence relation on $X$ which
is the transitive closure of the relation $\{ (x,y) \colon
d(x,y)<C\}$. We call the $\sim_C$-equivalence classes the {\it
$C$-components} of $X$. We consider now two cases.

\underline{Case I}: For all $C$ there is a bound $K_C$ on the
diameter of the $C$-components.

In this case we reduce $\ell_\infty(X)$ to $E_1$. For each positive
integer $n$, let $A^n_0, A^n_1, \dots$ enumerate (with repetition)
the $n$-components of $X$. Given $\vec x=(x_0, x_1,\dots) \in
X^\omega$, define $\vec y=\pi(\vec x) \in (\omega^\omega)^\omega$ by:
$y_n(m)=j$ iff $x_m \in A^n_j$. Suppose first that $\vec x \,
E_{\ell_\infty(X)} \, \vec x'$, say $\forall n\ d(x_n, x'_n) \leq
N$. Then, for all $n \geq N$ we have that for all $m$, $x_m$ and
$x'_m$ lie in the same $n$-component, since any two points in two
distinct $n$-components have $d$ distance greater than $n$. This
shows that $y_n=y'_n$ for all $n \geq N$, and so $\vec y \, E_1 \,
\vec y'$. Conversely, suppose $\forall n \geq N\ y_n=y'_n$. So, for
all $n \geq N$ and all $m$, $x_m$ and $x'_m$ lie in the same
$n$-component. In particular, $x_m$ and $x'_m$ lie in the same
$N$-component for all $m$, and so $d(x_m,x'_m) \leq K_m$ for all
$m$, that is, $\vec x\, E_{\ell_\infty(X)}\, \vec x'$.

\underline{Case II}: For some $C$  and every $K$, there is a
$C$-component of diameter greater than $K$.

In this case we reduce $\ell_\infty$ to $\ell_\infty(X)$. Fix $C$ as
in the case hypothesis. By a { \em $C$-path } we mean a finite
sequence of points $y_0, y_1, \dots, y_n$ from $X$ such that
$d(y_i,y_{i+1}) <C$ for all $i$. Note that all the points of a
$C$-path lie in the same $C$-component of $X$. Let $p_0, p_1, \dots$
enumerate all of the $C$-paths in $X$. If $p=(y_0,\dots,y_n)$ is a
$C$-path and $i \in \omega$, let $p(i)=y_i$ if $i \leq n$ and
otherwise let $p(i)=y_n$. Clearly $d(p(i),p(j)) \leq C |i-j|$ for
any $C$-path $p$ and any $i,j \in \omega$. Given $x \in
\omega^\omega$,  define $\vec y=\pi(x) \in X^\omega$ by $y_{\langle
i,j\rangle}= p_i(x(j))$. If $\forall m\ |x(m)-x'(m)| \leq N$, then
$\forall m\ d(y_m,y'_m) \leq C N$ from the above observation.
Suppose then that $x$ is not $E_{\ell_\infty}$-equivalent to $x'$.
Given $k$, let $A \subseteq X$ be a $C$-component with diameter
greater than $k$. Let $z,w \in A$ with $d(z,w)>k$. Let $p$ be a
$C$-path from $z$ to $w$. Say $p=(z=z_0,z_1, \dots, w=z_n)$. Let $i$
be such that $|x(i)-x'(i)| >n$. From $p$ we can easily obtain a
$C$-path $q$ such that $q(x(i))=z_0$ and $q(x'(i))=z_n$ (have the
path $q$ start at $z_0$, remain at $z_0$ for an appropriate number
of steps, then follow $p$, and then remain at $z_n$). Say $q=q_j$.
Then $y_{\langle j, i \rangle}=q(x(i))=z_0$ and $y'_{\langle j, i
\rangle}=q(x'(i))=z_n$. Thus $d(y_{\langle j, i \rangle},
y'_{\langle j, i \rangle})\geq k$. Since this is true for all $k$,
we have that $\vec y$ is not $E_{\ell_\infty(X)}$-equivalent to
${\vec{y}}'$.
\end{proof}

\section{\label{sec:class}Some special classes of separable Banach spaces}

In this section we generalize the construction in
Section~\ref{sec:cons} to obtain some classes of separable Banach
spaces. For each of these classes it turns out that the isomorphism,
the uniform homeomorphism, and the local equivalence relations on it
coincide. We also obtain some characterizations for the possible
complexity of these equivalence relations.

We will use the following equivalence relation on $2^\omega$ and a
characterization of its possible complexity.

\begin{defn}
For any sequence $\vec t=(t_i)\in\R^\omega$ with $t_i\geq 0$ for all
$i\in\omega$, let $E_{\vec t}$ be the equivalence relation on
$2^\omega$ defined by
$$x\, E_{\vec{t}}\ y\iff \sup_i\,( t_i \cdot | x(i)-y(i)| )< \infty.$$
\end{defn}

\begin{thm} \label{ge0e1}
For any $\vec t\in \R^\omega$ with $t_i\geq 0$ for all $i\in\omega$,
$E_{\vec t}$ is either smooth, Borel bireducible with $E_0$, or
Borel bireducible with $E_1$.
\end{thm}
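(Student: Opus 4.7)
The plan is to identify $E_{\vec t}$ with the equivalence relation induced by an $F_\sigma$ ideal on $\omega$ and then carry out a combinatorial case analysis. Set $C_N = \{i \in \omega : t_i \leq N\}$, so $(C_N)$ is an increasing sequence of subsets with $\bigcup_N C_N = \omega$. Since $|x(i)-y(i)| \in \{0,1\}$, the defining condition $\sup_i t_i |x(i)-y(i)| < \infty$ is equivalent to $\{i : x(i) \neq y(i)\} \subseteq C_N$ for some $N$. Hence $x \, E_{\vec t}\, y$ iff $x \triangle y \in \mathcal{I}$, where $\mathcal{I} = \bigcup_N \mathcal{P}(C_N)$; the trichotomy will be read off from the jumps $D_N := C_N \setminus C_{N-1}$ (with $C_{-1} = \emptyset$).

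If $\vec t$ is bounded then some $C_N = \omega$, so $\mathcal{I} = \mathcal{P}(\omega)$ and $E_{\vec t}$ has a single class, which is smooth. Otherwise $\vec t$ is unbounded, and I split on whether the set $\{N : D_N \text{ is infinite}\}$ is finite or infinite. When it is finite, let $F = C_{N^*}$, where $N^*$ is the largest index with $D_{N^*}$ infinite (taking $F = \emptyset$ if there is none). A short verification gives $\mathcal{I} = \{A \subseteq \omega : A \setminus F \text{ is finite}\}$, and unboundedness of $\vec t$ forces $F^c$ to be infinite (otherwise some $C_N$ would equal $\omega$). The Borel map $x \mapsto x|_{F^c}$ then reduces $E_{\vec t}$ to $E_0$ on $2^{F^c} \cong 2^\omega$, while extension by zero on $F$ reduces $E_0$ back, so $E_{\vec t} \sim_B E_0$.

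When infinitely many $D_N$ are infinite, enumerate those indices as $N_0 < N_1 < \cdots$ and set $F_k = C_{N_k}$. Since every $C_N$ lies in some $F_k$, we have $\mathcal{I} = \bigcup_k \mathcal{P}(F_k)$. Writing $G_0 = F_0$ and $G_k = F_k \setminus F_{k-1}$ for $k \geq 1$ partitions $\omega$ into countably many infinite sets (each $G_k$ contains the infinite jump $D_{N_k}$). Fix bijections $G_k \cong \omega$ to produce a homeomorphism $\Phi : 2^\omega \to (2^\omega)^\omega$ under which $\mathcal{I}$ corresponds to the ideal of sequences having only finitely many nonzero entries; equivalently, $\Phi(x)\, E_1\, \Phi(y)$ iff $\Phi(x)_k = \Phi(y)_k$ for all sufficiently large $k$ iff $x \triangle y \in \mathcal{I}$. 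Thus $\Phi$ is a Borel isomorphism of equivalence relations and $E_{\vec t} \sim_B E_1$.

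The three cases are mutually exclusive and exhaustive, and their outcomes are pairwise inequivalent by the strict inequalities $\id(2^\omega) <_B E_0 <_B E_1$ already recalled in Section~\ref{sec:Pre}. The main delicate point is the combinatorial analysis in the infinite-jump case: verifying the cofinality $\mathcal{I} = \bigcup_k \mathcal{P}(F_k)$ and ensuring that each piece $G_k$ is infinite. Both facts rest on the precise choice of the $N_k$ as the indices where the jump itself is infinite; once they are in place the remainder reduces either to an ideal-level identification or to a coordinate relabeling, both manifestly Borel.
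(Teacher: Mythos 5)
Your proof is correct and follows essentially the same strategy as the paper's: both define the same cut-points (your $N_k$ are precisely the paper's $n_k$, and your blocks $G_k$ are the paper's $A_k$), split into the same trichotomy according to how many of these blocks exist, and build the same bijective reductions to $E_0$ or $E_1$. The only cosmetic differences are that you phrase things via the $F_\sigma$ ideal $\mathcal{I}$ of symmetric differences and fold the paper's two $E_0$-subcases (no $n_0$ versus finitely many $n_k$) into a single case with $F=\emptyset$ or $F=C_{N^*}$.
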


\begin{proof}
If $(t_i)$ is bounded, then $E_{\vec t}$ is trivial, and in
particular smooth. So we assume $\vec t$ is unbounded. We
inductively define a finite or infinite sequence $n_0 < n_1 < \dots$
of natural numbers as follows. Let $n_0$ be the least $n\in\omega$,
if one exists, such that $\{ i \colon t_i \leq n\}$ is infinite.
Suppose $n_k$ is defined, then let $n_{k+1}$ be the least $n>n_k$,
if one exists, such that $\{ i \colon n_k < t_i \leq n \}$ is
infinite. If $n_k$ is defined, we also let $A_k=\{ i \colon n_{k-1}
< t_i \leq n_k\}$.

First assume that $n_k$ is defined for all $k\in\omega$. Thus, $A_k$
is defined for all $k$ and the $A_k$ form a partition of $\omega$.
Note that each $A_k$ is infinite by definition. Let $e_k^i$,
$i\in\omega$, enumerate $A_k$. Define $f: 2^\omega\to
(2^\omega)^\omega$ by $f(x)_k(i)=x(e_k^i)$. Clearly $x_1 \, E_{\vec
t} \ x_2$ iff the sequences of reals coded by $f(x_1)$ and $f(x_2)$
are eventually the same, that is, $f(x_1) E_1 f(x_2)$. Thus, $f$ is
a Borel reduction of $E_{\vec t}$ to $E_1$. In fact, $f$ is a
bijection between $2^\omega$ and $(2^\omega)^\omega$, so its inverse
gives a reduction from $E_1$ to $E_{\vec t}$.

Suppose next that $n_0$ is not defined. In this case $t_i \to
\infty$. Then in fact $x \, E_{\vec t} \ y$ iff $x \, E_0 \, y$,
that is, the identity map is a reduction from $E_{\vec t}$ to $E_0$.
Since the identity map is again a bijection, we have that $E_{\vec
t}$ is Borel bireducible with $E_0$.

Finally, suppose that $n_0 < \cdots < n_\ell$ are defined, while
$n_{\ell+1}$ is not. Since $(t_i)$ is unbounded, we must have that
$\omega -\bigcup_{k \leq \ell} A_k$ is infinite. Let $e_k^i$,
$i\in\omega$, enumerate $A_k$ for $k \leq \ell$, and let
$e^i_{\ell+1}$, $i\in\omega$, enumerate $\omega -\bigcup_{k \leq
\ell} A_k$. Define $g: 2^\omega\to 2^\omega$ by
$g(x)(i)=x(e^i_{\ell+1})$. Clearly $f$ is a Borel reduction of
$E_{\vec t}$ to $E_0$. For the other direction, define $h:
2^\omega\to 2^\omega$ by
$$ h(y)(j)=\left\{\begin{array}{ll}
y(i),, & \mbox{if $j=e^i_{\ell+1}$,} \\
0, & \mbox{otherwise.}
\end{array}\right.
$$
Easily $h$ is a reduction of $E_0$ to $E_{\vec t}$.
\end{proof}

The above proof can be simplified in view of known facts about $E_1$ and $E_0$ (see Section 2). In fact, if $E\leq_B E_1$ then $E$ is either smooth or Borel bireducible with either $E_0$ or $E_1$ by the dichotomy theorems of \cite{HKL} and \cite{KL}. However, we gave the full proof here since it is self-contained and gives some information about the combinatorial structure of the equivalence relation $E_{\vec{t}}$. This will happen again for the proof of Theorem~\ref{bthm} below.

As in Section~\ref{sec:cons} we consider sequences $\vec p= (p_i)$,
$\vec q = (q_i)\in\R^\omega$, and $\vec n=(n_i)\in\omega^\omega$
such that
\begin{equation}\label{eq:class1}
1 < p_i < q_i < p_{i+1} <2, \ n_i>0 \mbox{ and }
n_{i+1}^{\frac{1}{q_{i+1}}} > n_i^{\frac{1}{p_i}}.
\end{equation}
Let $\pqc$ be the collection of Banach spaces of the form
$$X= \big( \sum_{i=0}^\infty \oplus\, \ell_{r_i}^{n_i} \big)_2,$$
where $r_i \in \{ p_i, q_i \}$. $\pqc$ can be viewed as  a closed
subspace of ${\mathfrak B}_b$. To see this, first code the elements
of $\pqc$ by elements of $2^\omega$ in the natural manner (i.e.,
$x(i)$ determines whether to use $\ell_{p_i}$ or $\ell_{q_i}$). By
using a fixed bijection between $\omega \times \omega$ and $\omega$,
we fix an order of enumeration of the basis elements for all the
spaces in $\pqc$. This induces a map $f$ from $2^\omega$ to
$\mathfrak{B}_b \subseteq \R^\omega$ which is easily seen to be
continuous. Then $f(2^\omega)$ is a closed subset of
$\mathfrak{B}_b$ which represents the set of spaces in $\pqc$.
Clearly each $\pqc$ contains continuum many elements.

\begin{thm} \label{e0e1}
For any $\vec p$, $\vec q$, $\vec n$ satisfying
\mbox{\rm(\ref{eq:class1})} above, the uniform homeomorphism
relation on $\pqc$ is either smooth, Borel bireducible to $E_0$, or
Borel bireducible to $E_1$.
\end{thm}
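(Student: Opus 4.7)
The plan is to reduce the problem to an instance of the equivalence relation $E_{\vec t}$ on $2^\omega$ and then invoke Theorem~\ref{ge0e1}. First, as noted in the paragraph preceding the statement, the map $\iota\colon 2^\omega\to \pqc\subseteq \mathfrak{B}_b$ sending $x$ to the code of $X_x=(\sum_i \oplus \ell_{r_i^x}^{n_i})_2$, where $r_i^x=p_i$ if $x(i)=0$ and $r_i^x=q_i$ if $x(i)=1$, is a continuous bijection. Since $2^\omega$ is compact and $\mathfrak{B}_b$ is Hausdorff, $\iota$ is a homeomorphism onto its image, and therefore yields a Borel bireduction between the uniform homeomorphism relation on $\pqc$ and the pullback equivalence relation $\sim$ on $2^\omega$ defined by $x\sim y$ iff $X_x$ and $X_y$ are uniformly homeomorphic.

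Next I would establish the characterization
\[
x\sim y \iff \sup_i n_i^{|1/r_i^x-1/r_i^y|}<\infty.
\]
The $(\Leftarrow)$ direction follows exactly as in the first paragraph of the proof of Theorem~\ref{cond}: when the supremum is finite with bound $C$, the natural unit vector bases of $\ell_{r_i^x}^{n_i}$ and $\ell_{r_i^y}^{n_i}$ are $C$-equivalent at each level, so the $\ell_2$-sums $X_x$ and $X_y$ are $C$-isomorphic, hence uniformly homeomorphic. The $(\Rightarrow)$ direction is the content that must be transported from the second half of the proof of Theorem~\ref{cond}. If the supremum is infinite, by symmetry we find infinitely many $i$ with $r_i^x<r_i^y$ and $n_i^{1/r_i^x-1/r_i^y}\to\infty$. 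By Ribe's theorem (Theorem~\ref{thm:Ribe}), were $X_x$ and $X_y$ uniformly homeomorphic, then for each such $i$ the space $\ell_{r_i^x}^{n_i}$ would linearly embed in $X_y$ with a uniform constant. The type-$2$ computation in the proof of Theorem~\ref{cond} then yields $T_{2,n_i}(X_y)\le\sqrt{2}c\, n_i^{1/r_i^y-1/2}$, which together with $T_{2,n_i}(\ell_{r_i^x}^{n_i})\ge n_i^{1/r_i^x-1/2}$ gives $\|T\|\|T^{-1}\|\ge n_i^{1/r_i^x-1/r_i^y}/(\sqrt{2}c)$, contradicting uniform boundedness.

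Finally, I would set $t_i=|1/p_i-1/q_i|\cdot\log n_i$ and observe that $|1/r_i^x-1/r_i^y|=|1/p_i-1/q_i|\cdot|x(i)-y(i)|$, whence
\[
n_i^{|1/r_i^x-1/r_i^y|}=\exp\bigl(t_i\cdot|x(i)-y(i)|\bigr).
\]
Thus $\sim$ coincides with $E_{\vec t}$ on $2^\omega$, and by Theorem~\ref{ge0e1} it is either smooth, Borel bireducible with $E_0$, or Borel bireducible with $E_1$. Since $\sim$ is Borel bireducible with the uniform homeomorphism relation on $\pqc$, the same trichotomy holds for the latter.

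The main obstacle is verifying that the type-constant estimates in the proof of Theorem~\ref{cond}, which were written for the pure space $S_{\vec q}$, carry over to the mixed space $X_y$. This is not truly an obstacle in substance: the key estimates $T_2(\ell_s^k)\le cs^{1/2}k^{\max(0,1/s-1/2)}$ and $T_{2,n}(\ell_s^k)\le cs^{1/2}n^{\max(0,1/s-1/2)}$ are uniform over $s\in(1,2)$, and the chain of inequalities in the proof of Theorem~\ref{cond} which invoked condition (\ref{n_i}) now invokes the condition $n_{i+1}^{1/q_{i+1}}>n_i^{1/p_i}$ from (\ref{eq:class1}), since $1/r_j^y\le 1/p_j$ for every $j$. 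Thus the same display of inequalities applies verbatim after replacing each $q_j$ by $r_j^y$ in the computation.
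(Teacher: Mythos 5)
Your proposal is correct and follows essentially the same route as the paper: code each space in $\pqc$ by an element of $2^\omega$, use the characterization from Theorem~\ref{cond} to identify the uniform homeomorphism relation with an $E_{\vec t}$ relation, and invoke Theorem~\ref{ge0e1}. The only cosmetic difference is that you take $t_i=|1/p_i-1/q_i|\log n_i$ while the paper takes $t_i=n_i^{1/p_i-1/q_i}$; since $\exp$ is increasing these give the same equivalence relation, and your final paragraph about transporting the type estimates is a harmless elaboration of a point the paper treats as immediate because Theorem~\ref{cond} already covers arbitrary sequences with $p_i,q_i\in I_i=[p_i,q_i]$.
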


\begin{proof}
Consider the sequence of numbers $t_i=n_i^{\frac{1}{p_i}-\frac{1}{q_i}}$.
First suppose that the sequence $(t_i)$ is bounded. In this case,
all of the spaces $\pqc$ are isomorphic, so the uniform homeomorphism
relation on $\pqc$ is trivial.

Suppose next that $(t_i)$ is unbounded. For each
$X \in \pqc$, let $z(X) \in 2^\omega$ be the real $z$ such that
$z(i)=0$ if $X$ involves $\ell_{p_i}^{n_i}$ and $z(i)=1$
if $X$ involves $\ell_{q_i}^{n_i}$.

From the proof of Theorem~\ref{cond} we have that for $X, Y \in \pqc$,
$X$ is uniformly homeomorphic to $Y$ iff
$\sup_i (t_i \cdot |z(X)(i)- z(Y)(i)|) < \infty$, that is, $z(X)\ E_{\vec{t}}\, z(Y)$.
Therefore we are done by Theorem~\ref{ge0e1}.
\end{proof}

We now extend Theorem~\ref{e0e1} to some even larger classes of
separable Banach spaces. Again we define and study some new
equivalence relations.

\begin{defn}
For any sequence $\vec B=(B_i)$ where each $B_i$ is a finite subset
of $\R$, let $E_{\vec B}$ denote the equivalence relation
$\ell_\infty$ restricted on $\prod_{i\in \omega} B_i$.
\end{defn}

For $\vec{B}=(B_i)$ as in the above definition let $b_i=\sup\{
|a|\,:\, a\in B_i\}$. Then $E_{\vec{B}}$ is also
$E^{\vec{b}}_{\ell_\infty}$ restricted to $\prod_{i\in\omega}B_i$.
Thus $E_{\vec{B}}\leq_B E_{\ell_\infty}^{\vec{b}}\leq_B
\ell_\infty$.

\begin{thm} \label{bthm}
Let $\vec B=(B_i)$ where each $B_i$ is a finite subset of $\R$.
Then $E_{\vec B}$ is either smooth, Borel bireducible with $E_0$,
Borel bireducible with $E_1$, or Borel bireducible with $\ell_\infty$.
\end{thm}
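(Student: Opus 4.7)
The upper bound $E_{\vec B}\le_B\ell_\infty$ is immediate: since each $B_i$ is finite, $\prod_i B_i$ is compact, and writing $E_{\vec B}=\bigcup_{M\in\omega}\{(\vec x,\vec y)\colon \forall i\ |x(i)-y(i)|\le M\}$ exhibits $E_{\vec B}$ as a $K_\sigma$ equivalence relation, to which Rosendal's theorem from Section~\ref{sec:Pre}(4) applies. For the lower-bound classification, the plan is a single dichotomy based on the ``scale'' of the $B_i$'s. For each $M>0$ and $i\in\omega$, let $\sim_M$ denote the equivalence on $B_i$ generated by pairs at distance $\le M$, and let $D_i(M)$ denote the largest diameter of a $\sim_M$-class in $B_i$. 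The split is on whether $\sup_i D_i(M)$ is finite for every $M$.

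Case (I): $\sup_i D_i(M)<\infty$ for every $M$. Here I would reduce $E_{\vec B}$ to $E_1$ by the map $\vec x\mapsto(q_n(\vec x))_{n\in\omega}$, where $q_n\colon\prod_i B_i\to\prod_i(B_i/\sim_n)$ is the coordinate-wise quotient onto $\sim_n$-components (a continuous map into a product of finite discrete spaces). The equivalence $\vec x\,E_{\vec B}\,\vec y$ iff $q_n(\vec x)=q_n(\vec y)$ for all sufficiently large $n$ is straightforward: if $|x(i)-y(i)|\le M$ uniformly then $x(i)\sim_n y(i)$ for $n\ge M$; conversely, $q_n(\vec x)=q_n(\vec y)$ forces $|x(i)-y(i)|\le D_i(n)\le\sup_i D_i(n)<\infty$ uniformly. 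Applying the $E_1$-dichotomy from Section~\ref{sec:Pre}(3) then concludes that $E_{\vec B}$ is smooth, Borel bireducible with $E_0$, or Borel bireducible with $E_1$.

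Case (II): some $M_0$ has $\sup_i D_i(M_0)=\infty$. Here I would reduce $\ell_\infty$ to $E_{\vec B}$. The hypothesis yields indices $i_1<i_2<\cdots$ and $\sim_{M_0}$-chains $a_k^0<a_k^1<\cdots<a_k^{n_k}$ in $B_{i_k}$ with consecutive gaps $\le M_0$ and total span $\ge k$. A greedy sub-sampling extracts sub-chains $b_k^0<\cdots<b_k^{\ell_k}$ with consecutive gaps in $[1,1+M_0]$ and $\ell_k\to\infty$. Via Lemma~\ref{emba} it then suffices to reduce $E_{\ell_\infty}^{\vec\ell}$ into $E_{\vec B}$, and the map $\psi(\vec t)(i_k)=b_k^{\lfloor t_k\rfloor}$ (extended arbitrarily on other coordinates) does this: the two-sided gap bounds on the $b_k^j$ ensure both directions of the equivalence. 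Combined with the upper bound this gives $E_{\vec B}\sim_B\ell_\infty$.

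The main obstacle is isolating the correct notion of ``scale''. A naive count-based measurement---the number of $\sim_M$-classes being unbounded in $i$---is too weak: for families like $B_i=\{0,2,4,8,\dots,2^i\}$ the component count grows without bound while the diameters $D_i(M)$ stay uniformly bounded (for fixed $M$, every $\sim_M$-class lies in a fixed initial segment of the $2^j$'s), and $E_{\vec B}$ in fact turns out to be $E_1$-bireducible rather than $\ell_\infty$-complete. Replacing the count with the diameter $D_i(M)$ correctly measures how much room a single coordinate has for carrying an $\ell_\infty$-encoding, which is what makes the dichotomy come out cleanly.
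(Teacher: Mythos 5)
Your proof is correct and follows the same skeleton as the paper's: the same dichotomy on whether the $\sim_n$-class diameters stay bounded uniformly in $i$ (the paper first reduces to $B_i\subseteq\omega$ so that diameter and cardinality of a component are comparable, then phrases the split in terms of cardinality), the same reduction of $E_{\vec B}$ to $E_1$ in the bounded case, and the same chain-extraction reduction of $\ell_\infty$ into $E_{\vec B}$ in the unbounded case. The only substantive difference is that you finish the bounded case by invoking the Kechris--Louveau $E_1$-dichotomy, whereas the paper carries out a further self-contained subcase analysis (on the behavior of the inter-component gaps $g^i_n$) to separate the smooth, $E_0$, and $E_1$ outcomes --- a shortcut the authors themselves note is available in the remark following Theorem~\ref{ge0e1}.
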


\begin{proof}
By translating each $B_i$ we may assume that each $B_i$ consists of
nonnegative real numbers and contains $0$ as its least element. Let
$b_i= \max B_i$. If $(b_i)$ is bounded, then $E$ is a trivial
equivalence relation, and so is smooth. So, we assume $(b_i)$ is
unbounded. Also, we may assume that $B_i \subseteq \omega$, for we
may replace $B_i$ by $\{ \lfloor a \rfloor \colon a \in B_i\}$.

For $i$, $n \in \omega$, let $F^i_n$ denote the finite equivalence
relation on $B_i$ given by the transitive closure of the relation
$$x R^i_n \, y \iff |x-y| \leq n.$$
For each $i$, $n$, let $a^i_n(0), \dots, a^i_n(k)$
enumerate the $F^i_n$ classes of $B_i$ in increasing order
(i.e., $\max (a^i_n(l)) < \min(a^i_n(l+1))$). Here $k=k(i,n)$
depends on $i$ and $n$.

First consider the case where for some $n$, there is no bound
on the size of the $F^i_n$ equivalence classes.
That is, $\forall b\ \exists i\ \exists l \ |a^i_n(l)|>b$.
Fix such an $n$. Let $i_0<i_1< \dots$ be a subsequence
and $l_0, l_1, \dots$ a sequence such that $|a^{i_m}_n(l_m)| > m$.
We know that $E_{\ell_\infty} \leq E_{\vec C}$
where $C_i=\{ 0, 1, \dots, i\}$. So, it suffices to show in this case that
$E_{\vec C} \leq E_{\vec B}$, as it then gives that $E_{\vec B}$
is Borel bireducible with $\ell_\infty$. Let $Z=\prod C_i$ and $X=\prod B_i$.
Define $\pi \colon Z \to X$ by
$$\pi(z)(i)=\left\{\begin{array}{ll}
0, & \mbox{if $i \notin \{ i_0, i_1,\dots\}$,} \\
\mbox{the $z(m)$-th element of $a^{i}_n(l_m)$}, & \mbox{if $i=i_m$.}
\end{array}\right.
$$
Then for all $x,y \in Z$ we have $|x(m)-y(m)| \leq
|\pi(x)(i_m)-\pi(y)(i_m)| \leq n |x(m)-y(m)|$. It follows that $\pi$
is a Borel reduction from $E_{\vec C}$ to $E_{\vec B}$.

Next consider the case where for each $n$ there is a bound $K_n$ on
the size of the $F^i_n$ equivalence classes, that is, $\forall i\
\forall l\ |a^i_n(l)|< K_n$. We first show in this case that
$E_{\vec B} \leq_B E_1$. We define a map $\tau$ from $X=\prod B_i$
to $(\ww)^\omega$ as follows. For $x \in X$ let $\tau(x)(n) \in
\ww=y_n$ be the real such that $y_n(i)=$ the unique $l$ such that
$x(i)\in a^i_n(l)$. Consider $x,y \in X$. If $x E_{\vec B}\, y$,
then for some $C>0$ we have $|x(i)-y(i)|<C$ for all $i$. Let $n$ be
such that $n>C$. Then for all $i$ we must have that $x(i)$, $y(i)$
lie in the same class of $F^i_n$, since any two points in distinct
$F^i_n$ class are at least $n$ apart. This shows that
$\tau(x)(m)=\tau(y)(m)$ for all $m \geq n$. That is, $\tau(x)
E_1(\omega)\, \tau(y)$, where $E_1(\omega)$ refers to the $E_1$
(eventual agreement) relation on $(\ww)^\omega$. Conversely, suppose
$\tau(x) E_1(\omega)\, \tau(y)$. Fix $n$ so that for all $m \geq n$,
$\tau(x)(m)=\tau(y)(m)$. Then for all $i$ we have that $|x(i)-y(i)|
\leq n K_n$, since any two points in the same $F^i_n$ equivalence
class are at most $n K_n$ apart. Thus, $x E_{\vec B}\, y$. Thus,
$\tau$ is a Borel reduction of $E_{\vec B}$ to $E_1(\omega)$.
However, it is easy to see that $E_1(\omega)\leq_B E_1$, so $E_{\vec
B} \leq_B E_1$ in this case.

We next consider subcases. First assume that for every $n$ and every
$M$ there is a $D_n$ such that for infinitely many $i$ we have that
$M< g^i_n <D_n$, where $g^i_n$ is the minimum distance between
distinct $F^i_n$ classes (and $=0$ if there is only one $F^i_n$
class). We may therefore get a sequence $k_0< k_1<\cdots$ such that
for all $n$, there are infinitely many $i$ such that $k_n< g^i_{n}
\leq k_{n+1}$. For each $n$, let $A_n=\{ i \colon k_n <  g^i_n <
k_{n+1} \}$. Note that the $A_n$ are pairwise disjoint (this follows
from the fact that $g^i_m \leq  g^i_n$ if $m>n$). for each $n$ and
each $i \in A_n$, let $l=l(i,n)$ and $l'=l'(i,n)$ be such that the
distance between $a^i_n(l)$ and $a^i_n(l')$ is between $k_n$ and
$k_{n+1}$. Define $\varphi \colon (2^\omega)^\omega \to X$ as
follows. Given $y=(y_0, y_1, \dots) \in (2^\omega)^\omega$, let
$\varphi(y)=x \in X$ where $x(i)=0$ if $i \notin \bigcup_n A_n$, and
for $i \in A_n$, say if $i$ is the $j^{\text{th}}$ element of $A_n$,
then $x(i)$ is the least element of $a^i_n(l)$ if $y_n(j)=0$ and the
least element of $a^i_n(l')$ if $y_n(j)=1$. Note that for any $x,
x'$ in the range of $\varphi$, we always have that for all $i \in
A_n$ that $|x(i)-x'(i)| \leq k_{n+1}+n K_n$. Also, if $y_n \neq
y'_n$, then for some $i \in A_n$ we have that $|x(i)-x'(i)| \geq
k_n$. It follows that $\varphi$ is a Borel reduction of $E_1$ to
$E_{\vec B}$. Thus, $E_{\vec B}$ is Borel bireducible with $E_1$.

Finally assume that for some $n$ we have that
$$\forall C>0\ \exists i_C\ \forall i \geq i_C\ g^i_n >C.$$
Define $\psi \colon
X \to \ww$ by $\psi(x)(i)=$ the unique $l$ such that
$x(i) \in a^i_n(l)$. If $x E_{\vec B} \, y$, then we must have $\psi(x)
E_0\, \psi(y)$ as $g^i_n$ tends to infinity with $i$. Conversely, if $
\psi(x) E_0\, \psi(y)$, then $x E_{\vec B} \, y$ since
$|x(i)-y(i)| \leq n K_n$ for all $i$. So, $E_{\vec B} \leq E_0$
in this case. More generally, if we assume that for some $n$
and some $M$ that
$$\forall C>0\ \exists i_C\ \forall i \geq i_C\ (g^i_n >C \vee
g^i_n<M),$$
then the same conclusion follows. This is because on the set $A$
of $i$ such that $g^i_n <M$ we have that $B_i$ consists of a single
$F^i_M$ class, and thus $|B_i|\leq K_M$ for these $i$. Thus,
$\max B_i\leq M K_M$ for such $i$, and so we proceed
as before to define $\psi$, except now we use
only those $i \notin A$ (i.e., set $\psi(x)(i)=0$ for $i \notin A$).
It is also easy to reduce $E_0$ to $E_{\vec B}$ in this case and
so $E_{\vec B}$ is Borel bireducible with $E_0$.
The argument is an easier variation of that given in
the preceding paragraph.
\end{proof}

To define our generalized classes of Banach spaces, we again fix a
sequence of successive intervals $I_i=[l_i,r_i]$ with $l_{i+1}>r_i$,
and integers $\vec n=(n_i)$ as in Theorem~\ref{cond}. Once again, we
assume that
$$n_{i+1}^{\frac{1}{r_{i+1}}} \geq n_i^{\frac{1}{l_i}}. $$
For each $i$, let $S_i \subseteq [l_i,r_i]$ be a finite set.

\begin{defn}
For $I_i$, $n_i$, and $S_i$ as above, let $\snc$
be the collection of separable Banach spaces of the form
$X= \left( \sum_{i=1}^\infty \oplus\, \ell_{r_i}^{n_i} \right)_2$,
where $r_i \in S_i$. Let $E_{\vec S, \vec n}$ denote the
uniform homeomorphism relation on the collection
$\snc$.
\end{defn}

We note that $\snc$ can be regarded as a closed subspace of
${\mathfrak B}_b$. Alternatively, we may regard $\snc$ as the space
$\prod_i S_i$ ($S_i$ having the discrete topology) which is
homeomorphic to $2^\omega$. These two topologies give the same Borel
structure on $\snc$.

\begin{thm} \label{snthm}
For any $\vec I$, $\vec S$, $\vec n$ as above, $E_{\vec S, \vec n}$
is either smooth, Borel bireducible with $E_0$, Borel bireducible
with $E_1$, or Borel bireducible with $\ell_\infty$.
\end{thm}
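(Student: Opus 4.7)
The plan is to reduce the theorem directly to Theorem~\ref{bthm} via a Borel recoding of $\snc$. First I would identify $\snc$ with the product $\prod_i S_i$ (each $S_i$ equipped with the discrete topology) through the map $z$ that sends $X=(\sum_i\oplus\ell_{r_i}^{n_i})_2$ to $z(X)(i)=r_i$; as noted after the definition of $\snc$, this is a Borel isomorphism onto $\prod_i S_i$. Then, exactly as in the proof of Theorem~\ref{e0e1}, Theorem~\ref{cond} yields the equivalence
$$X \approx Y \iff \sup_i n_i^{|1/z(X)(i)-1/z(Y)(i)|} < \infty$$
for $X,Y\in\snc$.

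Taking logarithms, the right-hand side is equivalent to $\sup_i \log(n_i)\cdot|1/z(X)(i)-1/z(Y)(i)|<\infty$. I would then use the identity $|1/p-1/q|=|p-q|/(pq)$; since $p,q\in S_i\subseteq[l_i,r_i]\subseteq(1,2)$, the factor $1/(pq)$ is bounded above by $1$ and below by $1/4$, so $|1/p-1/q|$ and $|p-q|$ are comparable up to universal constants. Hence $E_{\vec S,\vec n}$ agrees (under the identification $z$) with the relation
$$z\, E \, z' \iff \sup_i \log(n_i)\cdot |z(i)-z'(i)| < \infty$$
on $\prod_i S_i$. Now I would set $B_i=\{\log(n_i)\cdot p\,:\,p\in S_i\}$, a finite subset of $\R$, and define the Borel bijection $\Psi\colon\prod_i S_i\to\prod_i B_i$ by $\Psi(z)(i)=\log(n_i)\cdot z(i)$. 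By the preceding computation, $\Psi$ is a Borel isomorphism carrying $E_{\vec S,\vec n}$ to $E_{\vec B}$, so in particular the two relations are Borel bireducible.

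Finally I would invoke Theorem~\ref{bthm} on $E_{\vec B}$ to conclude that $E_{\vec S,\vec n}$ is either smooth or Borel bireducible with $E_0$, $E_1$, or $\ell_\infty$. I do not expect any genuine obstacle here: all of the heavy lifting (the Banach-space side in Theorem~\ref{cond} and the purely combinatorial side in Theorem~\ref{bthm}) is already done, and the only substantive step is the bi-Lipschitz comparison of $|1/p-1/q|$ with $|p-q|$ on bounded subintervals of $(1,2)$, which lets the multiplicative Banach--Mazur criterion be absorbed into an additive $\ell_\infty$-type condition on a product of finite sets of reals, after which Theorem~\ref{bthm} applies verbatim.
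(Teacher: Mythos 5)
Your proof is correct and follows essentially the same route as the paper: identify $\snc$ with a product of finite sets, invoke Theorem~\ref{cond} to get the Banach--Mazur criterion $\sup_i n_i^{|1/p_i-1/q_i|}<\infty$, take logarithms, recode the space into a $\prod_i B_i$ with finite $B_i\subseteq\R$ so that the relation becomes $E_{\vec B}$, and finish by Theorem~\ref{bthm}. The only cosmetic difference is your choice of recoding: you set $B_i=\{\log(n_i)\,p:p\in S_i\}$ and then need the bi-Lipschitz comparison between $|1/p-1/q|$ and $|p-q|$ on $(1,2)$ to pass from the multiplicative criterion to an additive one, whereas the paper sets $B_i=\{\tfrac{1}{p}\log(n_i):p\in S_i\}$, which turns the criterion into the $E_{\vec B}$ condition on the nose and avoids that extra step.
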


\begin{proof}
Suppose $X$, $Y \in \snc$, say $X$ corresponds to the sequence
$(p_i)$ (where $p_i \in S_i$), and $Y$ corresponds to $(q_i)$.
Again the proof of Theorem~\ref{cond}
shows that $X E_{\vec S, \vec n} Y$ iff
$\big(n_i^{| \frac{1}{p_i}- \frac{1}{q_i}|}\big)$ is bounded.
Define $\pi \colon \snc \to \R^\omega$ as follows. If $X$
corresponds to the sequence $(p_i)$, then let
$\pi(X)(i)= \frac{1}{p_i} \log(n_i)$.
Note that all of the $\pi(X)(i)$ take values in the finite set
$B_i:= \{ \frac{1}{p_i} \log(n_i) \colon p_i \in S_i\}$.
We then have that $X E_{\vec S, \vec n} Y$ iff
$\pi(X) E_{\vec{B}} \pi(Y)$. Moreover,
Moreover, $\pi$ is a bijection between $\snc$ and $\prod B_i$.
Thus $E_{\vec{S},\vec{n}}$ is Borel bireducible with $E_{\vec{B}}$.
We are done by Theorem~\ref{bthm}.
\end{proof}

\section{\label{sec:noniso}Nonisomorphic uniformly homeomorphic Banach spaces}

Fix a countable dense set $D \subseteq (2,3)$. If $A$ is a countable
subset of $(2,3)\setminus D$, then we associate to $A$ the separable
Banach space
$$X_A= \left( \sum_{p \in D} \oplus \ell_p \right)_{c_0}
\bigoplus \left( \sum_{q \in A} \oplus \ell_q \right)_{c_0}. $$
Since $(2,3)\setminus D$ is Borel bijectable with $\R$, to prove
Theorem~\ref{thm:main3} it suffices to show that $X_A$ is uniformly
homeomorphic to $X_B$ for any countable $A, B \subseteq
(2,3)\setminus D$, and $X_A$ is not isomorphic to $X_B$ for $A \neq
B$. The following well known lemma, which we sketch a proof for
convenience, verifies the second requirement.

\begin{lem}
If $A \subseteq (2,3)$ is countable and $q \notin A$, then $\ell_q$
is not isomorphic to a subspace of $\left( \sum_{p \in A} \oplus
\ell_p \right)_{c_0}$.
\end{lem}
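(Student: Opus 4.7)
The plan is to argue by contradiction: assume $T : \ell_q \hookrightarrow Y := \left( \sum_{p \in A} \oplus\, \ell_p \right)_{c_0}$ is an isomorphic embedding, and derive a contradiction from the image $f_n := T(e_n)$ of the $\ell_q$-basis. Enumerating $A = \{p_1, p_2, \ldots\}$ and writing $f_n = (f_n^{(k)})_k$ with $f_n^{(k)} \in \ell_{p_k}$, note that $\|f_n\|$ is bounded above and below, $(f_n)$ is weakly null in $Y$ and equivalent to the $\ell_q$-basis, and each coordinate sequence $(f_n^{(k)})_n$ is weakly null in $\ell_{p_k}$ (here I assume $1 < q < \infty$; the cases $q = 1$ or $q = \infty$ are trivial as $\ell_1$ and $\ell_\infty$ do not embed into $Y$).

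By Pitt's theorem, for every $p_k < q$ the operator $P_k \circ T : \ell_q \to \ell_{p_k}$ is compact, so $\|f_n^{(k)}\| \to 0$. A diagonal extraction then yields a subsequence such that for every $k$ either $\|f_n^{(k)}\| \to 0$ (case $A_k$), or $\|f_n^{(k)}\| \geq c_k > 0$ and, by the Bessaga--Pe\l czy\'nski selection principle applied after a further subsequence, $(f_n^{(k)})_n$ is equivalent to the $\ell_{p_k}$-basis (case $B_k$); in case $B_k$, necessarily $p_k > q$. Let $K$ denote the set of case-$B_k$ indices. If $K = \emptyset$ then every coordinate norm tends to $0$, and a standard gliding hump across the summands (exploiting the $c_0$-tail condition together with the pointwise decay) produces a further subsequence within $\varepsilon$ of a disjointly supported block in the $c_0$-sum, equivalent to the $c_0$-basis. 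This contradicts equivalence with the $\ell_q$-basis, since $\|(1,\ldots,1,0,\ldots)\|_{\ell_q} = N^{1/q} \to \infty$ while the $c_0$-norm stays at $1$.

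If $K \neq \emptyset$, set $p_\ast := \inf\{p_k : k \in K\}$; the plan is then to upgrade the $\ell_{p_k}$-structure on a single coordinate to the statement that $(f_n)$ itself is equivalent to the $\ell_{p_\ast}$-basis. A further diagonal refinement is arranged so that the case-$A_k$ contributions $\|\sum_n a_n f_n^{(k)}\|$ are controlled uniformly (in $k$ and $(a_n)$) by $\|(a_n)\|_{\ell_{p_\ast}}$, while for $k \in K$ one has $\|\sum_n a_n f_n^{(k)}\| \approx \|(a_n)\|_{\ell_{p_k}} \leq \|(a_n)\|_{\ell_{p_\ast}}$ since $p_k \geq p_\ast$; combined with the lower bound provided by some $k_0 \in K$ with $p_{k_0}$ near $p_\ast$, this gives $\|\sum_n a_n f_n\|_Y \approx \|(a_n)\|_{\ell_{p_\ast}}$. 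Hence $(f_n)$ is equivalent to both the $\ell_q$- and $\ell_{p_\ast}$-bases, which forces $p_\ast = q$ (evaluate both on the constant $N$-tuples and let $N \to \infty$), contradicting $q \notin A$. The principal obstacle is precisely this last refinement: arranging the case-$A_k$ decay uniformly in $k$ when $K$ is infinite --- in which case the lower bounds $c_k$ must themselves tend to $0$ by the $c_0$-sum condition on each $f_n$, so the diagonal bookkeeping has to interleave the tail-in-$k$ condition with the tail-in-$n$ condition compatibly --- and handling the case in which $p_\ast$ is not attained by any element of $K$ through a straightforward $\varepsilon$-approximation argument on the approximating $k_0 \in K$.
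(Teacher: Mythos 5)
Your proposal takes a genuinely different route from the paper, and the route has a gap that I do not think is fillable in the form you describe. The paper argues via a concentration dichotomy on the \emph{space} $X\cong\ell_q$, not the basis: either there is a $k$ and $\epsilon>0$ with $\|P_k x\|\ge\epsilon\|x\|$ for all $x\in X$, or not. In the first case $\ell_q$ embeds into the finite $c_0$-sum $\bigl(\sum_{n\le k}\oplus\,\ell_{p_n}\bigr)_{c_0}$, which by the classical fact about finite sums of $\ell_p$'s forces $q=p_n$ for some $n\le k$. In the second case a gliding-hump extraction produces, inside $X$, a sequence within $\tfrac12$ of a disjointly supported (hence $c_0$-equivalent) block, contradicting that $X\cong\ell_q$. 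Your $K=\emptyset$ branch is essentially the paper's second case and is fine.

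The trouble is your $K\ne\emptyset$ branch. You aim to show $(f_n)$ is equivalent to the $\ell_{p_\ast}$-basis and then force $p_\ast=q$, but the upper estimate $\bigl\|\sum_n a_n f_n\bigr\|_Y\lesssim\|(a_n)\|_{p_\ast}$ cannot be arranged as you describe. For $k\in K$ the Bessaga--Pe\l czy\'nski equivalence constant of $(f_n^{(k)})_n$ to the $\ell_{p_k}$-basis degrades as the lower bound $c_k$ shrinks, and the $c_0$-sum structure forces $c_k\to0$ whenever $K$ is infinite, so those constants are not uniform in $k$; the ``$\approx\|(a_n)\|_{p_k}\le\|(a_n)\|_{p_\ast}$'' step silently assumes uniformity that fails. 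When $p_\ast$ is not attained the lower estimate also fails: there is no single $k_0\in K$ giving $\bigl\|\sum a_n f_n^{(k_0)}\bigr\|\gtrsim\|(a_n)\|_{p_\ast}$, because for $p_{k_0}>p_\ast$ the norms $\|\cdot\|_{p_{k_0}}$ and $\|\cdot\|_{p_\ast}$ are \emph{not} uniformly equivalent (on $(1,\dots,1,0,\dots)$ the ratio grows like $N^{1/p_\ast-1/p_{k_0}}$), so ``a straightforward $\varepsilon$-approximation'' does not close this. Finally, note that even your upper estimate is in tension with the standing assumption $\|\sum a_n f_n\|\approx\|(a_n)\|_q$ with $q<p_\ast$; you are trying to prove an intermediate claim that must ultimately contradict your hypotheses, which is legitimate in a proof by contradiction, but it means the claim has to be \emph{derived} from the hypotheses and I do not see a derivation. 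You correctly flag this as the principal obstacle; I would say it is not merely an obstacle to be bookkept around but the place where the argument actually breaks. The cleaner way around it is exactly what the paper does: first decide whether a finite initial block already captures a fixed proportion of the norm of \emph{every} element of $X$, and only then either invoke the finite-sum fact or glide a hump.
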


\begin{proof}
Let $A=\{p_n:n\in\N\}$. For $k\in \N$, denote by $P_k$ the natural
projection onto $\left(\sum_{n=1}^k \oplus \ell_{p_n}\right)_{c_0}$.
Suppose there exists a $X\subset \left(\sum_{n=1}^{\infty} \oplus
\ell_{p_n}\right)_{c_0}$ which is isomorphic to $\ell_q$. Consider
two mutually exclusive cases.

(i) Suppose that there exist $\ep>0$ and $k\in\N$ such that for all
$x\in X$, $\|P_k x\|\ge \ep \|x\|$. Put $X'=P_k(X)$. Then $T:X\to
X'$ defined by $Tx=P_k(x)$, for all $x\in X$, is an isomorphism with
$\|T^{-1}\|\le 1/\ep$. That is, $\ell_q$ is isomorphic to a subspace
of the finite direct sum $\left(\sum_{n=1}^k \oplus
\ell_{p_n}\right)_{c_0}$, which is impossible unless $q=p_n$ for
some $1\le n\le k$.

(ii) Suppose that for all $\ep>0$ and all $k\in\N$ there exists
normalized $x\in X$ with $\|P_k x\|<\ep \|x\|$. Let $(\ep_i)\searrow
0$ such that $\sum_i \ep_i <1/4$. Construct inductively a sequence
of normalized $(x_i)\in X$ and $0<k_1<k_2<k_3\ldots$ such that
$\|x_i-P_{k_i}x_i\|\le \ep_i$ and $\|P_{k_i}x_{i+1}\|<\ep_i$, and
put $x_i'=(P_{k_i}-P_{k_{i-1}})x_i$. Then $(x_i')_{i=1}^{\infty}$ is
a sequence of disjointly supported vectors thus equivalent to the unit vector
basis of $c_0$. Since $\sum_{i=1}^{\infty}\|x_i-x_i'\|\le \sum_i
(\ep_i+\ep_{i-1})<1/2$, this implies that $(x_i)\subset X$ is
equivalent to $c_0$ basis, a contradiction.
\end{proof}

\begin{thm}
Let $D\subset (2,3)$ be dense and $A\subset (2,3)\setminus D$ be
countable. Then $X=\big(\sum_{p\in D}\oplus \ell_p\big)_{c_0}$ is
uniformly homeomorphic to $X_A=\big(\sum_{p\in D}\oplus
\ell_p\big)_{c_0}\oplus\big(\sum_{q\in A}\oplus \ell_q\big)_{c_0}$.
\end{thm}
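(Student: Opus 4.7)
The plan is to build an explicit uniform homeomorphism between $X_A$ and $X$ by rerouting each ``extra'' $\ell_q$-summand ($q \in A$) onto a nearby $\ell_{p_q}$-summand ($p_q \in D$), and then absorbing these rerouted copies into $X$ using the isomorphism $\ell_p \oplus \ell_p \cong \ell_p$. Since $A$ is countable and $D$ is dense in $(2,3)$, a greedy enumeration of $A = \{q_1, q_2, \dots\}$ produces an injection $q \mapsto p_q$ of $A$ into $D$ with $|p_q - q|$ as small as desired (at stage $n$, simply pick any unused $p_{q_n} \in D$ within distance $2^{-n}$ of $q_n$). Writing $X_A = X \oplus_{c_0} Y$ with $Y = (\sum_{q\in A}\oplus \ell_q)_{c_0}$ and introducing the auxiliary space $Y' = (\sum_{q\in A}\oplus \ell_{p_q})_{c_0}$, we would have the linear isomorphism $X \oplus_{c_0} Y' \cong X$, because each $\ell_{p_q}$ appearing in $Y'$ can be merged with its namesake summand in $X$ via $\ell_{p_q} \oplus \ell_{p_q} \cong \ell_{p_q}$.

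The core technical step is to construct a uniform homeomorphism $Y \to Y'$. The natural candidate is the coordinatewise Mazur map: on each summand, the Mazur map $M_{q,p_q} \colon \ell_q \to \ell_{p_q}$, defined on the unit sphere by $(M_{q,p_q}x)_i = \operatorname{sgn}(x_i)|x_i|^{q/p_q}$ and extended positively homogeneously, is a Hölder-continuous homeomorphism of spheres (and of balls of any fixed radius) with uniformly bounded Hölder constants when $q, p_q$ vary over a compact subset of $(2,3)$. The $c_0$-sum structure is well adapted to assembling such coordinatewise maps, because $\|y\|_Y = \sup_q \|y_q\|_q$ controls all coordinates simultaneously: if $\|y - y'\|_Y < \varepsilon$ and $\max(\|y\|_Y, \|y'\|_Y) \leq R$, then each pair $(y_q, y'_q)$ lies in a ball of radius $R$ and differs by at most $\varepsilon$, and the bound on $\|M_{q,p_q}(y_q) - M_{q,p_q}(y'_q)\|_{p_q}$ provided by the Hölder estimate is the same across all coordinates, allowing the sup over $q$ to be controlled.

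The main obstacle is promoting this bounded-set uniform continuity to a globally uniformly continuous map, since the Mazur map on an unbounded $\ell_q$ has continuity modulus that depends on the radius. The standard remedy, adapted to the $c_0$-sum setting, is to stabilize the construction by a coordinatewise nonlinear radial gauge: replace $M_{q,p_q}$ by a composition with a bi-uniformly continuous self-homeomorphism of $\ell_q$ (respectively $\ell_{p_q}$) that compresses the range $[R, \infty)$ of norms into a bounded zone at the cost of a controlled distortion, then apply $M_{q,p_q}$ on the compressed vector, and finally decompress. Because only finitely many coordinates of any given $y \in Y$ have norm above any threshold, the compression-decompression is a uniform homeomorphism of $Y$, and its composition with the coordinatewise Mazur map delivers the required global uniform homeomorphism $Y \to Y'$. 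Combined with the absorption $X \oplus_{c_0} Y' \cong X$ from the first paragraph, this yields $X_A \approx_u X$.
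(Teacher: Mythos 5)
Your reduction to constructing a uniform homeomorphism $Y\to Y'$ is natural, and the linear absorption $X\oplus_{c_0}Y'\cong X$ is fine (the block isomorphisms $\ell_{p_q}\oplus_\infty\ell_{p_q}\cong\ell_{p_q}$ have uniformly bounded constants). The gap is in the construction of $Y\to Y'$, and it is not a technical gap but a fundamental one: any \emph{coordinatewise} map $\Phi\colon(\sum_q\oplus\,\ell_q)_{c_0}\to(\sum_q\oplus\,\ell_{p_q})_{c_0}$, if it were a uniform homeomorphism, would restrict along the coordinate axis $\ell_q\hookrightarrow Y$ (all other summands set to $0$) to a uniform homeomorphism $\ell_q\to\ell_{p_q}$. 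But $\ell_q\not\approx_u\ell_{p_q}$ when $q\neq p_q$; this is exactly Ribe's theorem that the paper invokes. So no choice of single $p_q\in D$ per $q\in A$ can make the coordinatewise strategy work, no matter how close $p_q$ is to $q$. Relatedly, the ``bi-uniformly continuous self-homeomorphism of $\ell_q$ that compresses the norms $[R,\infty)$ into a bounded zone'' does not exist: such a map would carry a bounded set onto all of $\ell_q$ under its inverse, and that inverse cannot be uniformly continuous. If you chase through the composite $\Theta^{-1}_{p_q}\circ M_{q,p_q}\circ\Theta_q$ on two nearby vectors of large norm $R$ (say $(R,0,\dots)$ and $(R,\varepsilon,0,\dots)$), one of the two directions of the homeomorphism blows up like $R^{\,1-q/p_q}$ or $R^{\,1-p_q/q}$, and one of these exponents is positive.

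What the paper actually does, following Ribe and Aharoni--Lindenstrauss, requires two extra ideas that your sketch is missing. First, one attaches to each $q_j\in A$ not a single nearby $p$ but an entire sequence $p_{\langle j,n\rangle}\to q_j$ in $D$. The relevant estimate (Proposition 9.2 of \cite{BL}, quoted in the paper) is that the Mazur maps $\varphi_{p,q}$ are \emph{equi}-uniformly continuous only on balls of radius about $\exp(1/|p-q|)$; this radius grows without bound as $p\to q$, so by letting the ``working'' index $p_{\langle j,n\rangle}$ depend on the scale $\|x\|\approx 2^n$, one stays inside the zone of good modulus at every scale. Second, because the working index changes with scale, the construction is \emph{not} coordinatewise: at scale $t\in[\alpha_n,\alpha_{n+1}]$ the map mixes the $\ell_{q_j}$-summand with the two summands $\ell_{p_{\langle j,n\rangle}}$ and $\ell_{p_{\langle j,n+1\rangle}}$, and a homotopy $h^j_\tau$ of invertible block operators (Lemmas~\ref{lem:path} and~\ref{lem:paths}) is needed to interpolate between the ``hand-off'' at scale $\alpha_n$ and the one at scale $\alpha_{n+1}$, with operator norms and Lipschitz constants uniform in $j$. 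It is precisely this scale-dependent mixing and splicing that circumvents the impossibility of a global uniform homeomorphism $\ell_q\to\ell_p$ for $p\neq q$.
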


\begin{remark}
The proof is a slight generalization of Theorem 10.28 in \cite{BL}.
The idea of the proof is due to Ribe \cite{Ri2} who proved it in a
special case. This was later extended by Aharoni and Lindenstrauss
\cite{AL} to a more general setting (see \cite{B} for a nice
exposition). We will reproduce the main steps of the proof following
\cite{BL} with the necessary modifications, and also present an
additional step (Lemma 7.4) clarifying an obscure point there.
\end{remark}

\begin{proof}
Recall that for $x=(x_i)\in\ell_p$, the Mazur map $\varphi_{p,
q}:\ell_p\to\ell_q$ is defined by
$$\varphi_{p, q}(x)=\|x\|_p^{1-\frac{p}{q}}(\textrm{sign}(x_i)|x_i|^{\frac{p}{q}})_i.$$
$\varphi$ is positively homogeneous and, for each $K>0$ it is a
uniform homeomorphism of $K$-ball in $\ell_p$ onto the $K$-ball in
$\ell_q$. Moreover, for every $M$ the family $\{\varphi_{p,q}:1\le
p, q\le M\}$ is a family of equi-uniform homeomorphisms where each
$\varphi_{p,q}$ is restricted to the ball of radius $\exp(1/|p-q|)$
(see Proposition 9.2, \cite{BL}).

Let $(q_j)$ be an enumeration of $A$. Since $D$ is dense, there
exist disjoint infinite subsets $I_j=\{\langle j,n\rangle:n\in\N\}\subset \N$,
$j=1, 2\ldots$, such that $p_{\langle j,n\rangle}\to q_j$ for each $j$. To
simplify the notation, we write $\varphi_{j,n}$ for the Mazur map
$\varphi_{p_{\langle j,n\rangle}, q_j}:\ell_{p_{\langle j,n\rangle}}\to \ell_{q_j}$. By
passing to subsequences of $I_j$'s if necessary, we can and will
assume that the family $\{\varphi_{j,n}, (\varphi_{j,n})^{-1}: j,
n\in\N\}$ of maps where each is restricted to the $2^n$-balls of its
domain is equi-uniformly continuous.

In the next step we solely work on copies of $\ell_{q_j}$'s. The
goal is to construct continuous paths of homeomorphisms between two
particular invertible operators $S_0^j$ and $S_1^j$ described below
in a `uniform' manner. For a fixed $j$, this follows from the fact
that the general linear group of invertible operators on $\ell_q$ is
contractible (cf. e.g., \cite{Mit}). Since we require the paths to
be independent of $j$'s, we give them explicitly.

\begin{lem}\label{lem:path}
There exists a continuous path $\tau\to V_{\tau}$, $0\le \tau\le
1/2$ of invertible operators on $(\ell_q\oplus\ell_q\oplus\ell_q)_q$
such that $V_0$ is the identity and $V_{1/2}(u, v, w)=(u, w, v)$,
for all $(u, v, w)\in (\ell_q\oplus\ell_q\oplus\ell_q)_q$. Moreover,
$\|V_{\tau}\|\le 2$, and the path is independent of $1\le q<\infty$
in the sense that the matrix representation of $V_{\tau}$ with
respect to the decomposition $\ell_q(\ell_q)$ does not depend on
$q$.
\end{lem}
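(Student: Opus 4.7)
The key obstruction is that the pure swap $(v,w)\mapsto(w,v)$ viewed in two dimensions has determinant $-1$, so no path of linear operators in the $V_2\oplus V_3$ block alone can connect it to the identity. The plan is to exploit infinite-dimensionality in order to ``correct a sign'' using an extra rotation inside one of the $\ell_q$ factors.

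Write the second and third summands as $V_2$ and $V_3$, each with its standard basis $(e^{(2)}_i)$, $(e^{(3)}_i)$, and further split $V_2=V_2'\oplus V_2''$ by taking the odd and even indexed basis vectors. The first step is to introduce the $2\times 2$ rotation $R_\theta=\bigl(\begin{smallmatrix}\cos\theta&-\sin\theta\\ \sin\theta&\cos\theta\end{smallmatrix}\bigr)$. Define two paths:
\begin{itemize}
\item[(a)] For $\tau\in[0,1/4]$, let $A_\tau$ act as the identity on $V_1$ and on each pair $(e^{(2)}_i,e^{(3)}_i)$ as $R_{2\pi\tau}$. Then $A_{1/4}(u,v,w)=(u,-w,v)$.
\item[(b)] For $\tau\in[1/4,1/2]$, let $B_\tau$ act as the identity on $V_1$ and $V_3$ and, inside $V_2=V_2'\oplus V_2''$, as $R_{4\pi(\tau-1/4)}$ on each pair $(e^{(2)}_{2i-1},e^{(2)}_{2i})$. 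Then $B_{1/4}=I$ and $B_{1/2}=-I_{V_2}$.
\end{itemize}
Now set $V_\tau=A_\tau$ for $\tau\in[0,1/4]$ and $V_\tau=B_\tau\circ A_{1/4}$ for $\tau\in[1/4,1/2]$. The two definitions agree at $\tau=1/4$, $V_0=I$, and $V_{1/2}(u,v,w)=B_{1/2}(u,-w,v)=(u,w,v)$ as required.

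The remaining step is to check the norm bound uniformly in $q$. Both $A_\tau$ and $B_\tau$ act diagonally (in the appropriate block structure) via copies of $R_{\theta}$ on $\ell_q^2$, so $\|A_\tau\|_q=\|B_\tau\|_q=\|R_\theta\|_{\ell_q^2\to\ell_q^2}$. The bound $\|R_\theta\|_{\ell_1^2}\le|\cos\theta|+|\sin\theta|\le\sqrt{2}$ and the analogous $\ell_\infty^2$ bound, combined with Riesz--Thorin interpolation, give $\|R_\theta\|_{\ell_q^2}\le\sqrt{2}$ for every $1\le q<\infty$. Hence $\|V_\tau\|_q\le\sqrt{2}$ on $[0,1/4]$ and $\|V_\tau\|_q\le\|B_\tau\|_q\|A_{1/4}\|_q\le 2$ on $[1/4,1/2]$. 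Invertibility is immediate since each $R_\theta$ has inverse $R_{-\theta}$, and continuity in $\tau$ follows from the continuity of $\theta\mapsto R_\theta$ in operator norm. Finally, the matrix entries of $V_\tau$ with respect to the canonical basis of $\ell_q(\ell_q)$ involve only $\cos$ and $\sin$ of the parameter (and $0$'s and $1$'s elsewhere), so the matrix representation does not depend on $q$. The only genuinely delicate point will be verifying the interpolation bound $\|R_\theta\|_{\ell_q^2}\le\sqrt{2}$ cleanly; everything else is straightforward composition and bookkeeping.
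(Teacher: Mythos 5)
Your argument is correct, and it takes a genuinely different route from the paper's. The paper first identifies $(\ell_q\oplus\ell_q\oplus\ell_q)_q$ with an infinite $\ell_q$-sum via an isomorphism $D\colon\ell_q\to\ell_q(\ell_q)$, rewrites the swap as the block-diagonal operator $J\oplus I\oplus I\oplus\cdots$ (with $J$ the $2\times 2$ flip), and runs a two-stage Eilenberg-swindle in $4\times4$ blocks: rotate all blocks simultaneously from $I_4$ to $J\oplus J$, arriving at $J\oplus J\oplus\cdots$, then hold the leading $J$ fixed while rotating the remaining blocks back, landing on $J\oplus I\oplus I\oplus\cdots$. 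You instead name the obstruction explicitly --- the swap on $V_2\oplus V_3$ has determinant $-1$ read two-dimensionally --- and cancel the sign with a second rotation inside $V_2\cong V_2'\oplus V_2''$. Both proofs exploit the same self-similarity of $\ell_q$, but yours avoids the recoordinatization $\ell_q\cong\ell_q(\ell_q)$ and the $4\times4$ block bookkeeping, while making the homotopy-theoretic obstruction visible; the norm estimate is also slightly cleaner ($\sqrt2$ at each stage, $2$ after composing, matching the paper's bound, while the paper's $4\times4$ rotation already costs $2$ per stage and relies on the specific cancellation in $A_{1/4}$ to end at $\tau=1/2$). One remark on the point you flagged as delicate: you do not need Riesz--Thorin. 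Since $R_\theta$ is an isometry of $\ell_2^2$ and the comparison constants between $\|\cdot\|_{\ell_q^2}$ and $\|\cdot\|_{\ell_2^2}$ are $1$ in one direction and $2^{|1/q-1/2|}$ in the other, the chain $\ell_q^2\to\ell_2^2\to\ell_2^2\to\ell_q^2$ gives $\|R_\theta\|_{\ell_q^2\to\ell_q^2}\le 2^{|1/q-1/2|}\le\sqrt2$ directly, for all $1\le q\le\infty$; the diagonal action on the $\ell_q$-sum then inherits the same bound, as you use. (Also, be slightly careful with the phrasing ``$\|A_\tau\|_q=\|R_\theta\|_{\ell_q^2}$'': because $A_\tau$ is the identity on $V_1$ one only has $\le$, which is of course all you need.)
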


\begin{proof}
Consider an isomorphism $D:\ell_q\to \ell_q(\ell_q)$. $D$ induces an
isomorphism $(\ell_q\oplus\ell_q\oplus\ell_q)_q\to
(\ell_q\oplus\ell_q)_q\oplus(\ell_q\oplus\ell_q\oplus\ldots)_q$
mapping $(u, v, w)$ to $(v, w, (Du)_1, (Du)_2, \ldots)$. Regarding
the latter as a sequence of scalars and composing with obvious
isometries, the operator $V_{1/2}$ can be written as a block
diagonal matrix of the form $V_{1/2}=J\oplus I\oplus I\oplus \ldots$
where $I$ is the $2\times 2$ identity matrix and $J= \left(
\begin{array}{cc}
0 & 1 \\
1 & 0 \end{array} \right)$. For $0\le \tau\le 1/4$, consider the
path $V_{\tau}$ of invertible operators defined by
$V_{\tau}=A_{\tau}\oplus A_{\tau}\oplus A_{\tau}\oplus\ldots$ where
$A_{\tau}= \left(
\begin{array}{cc}
B_{\tau} & C_{\tau} \\
-C_{\tau} & B_{\tau} \end{array} \right)$, and
$$B_{\tau}= \left(
\begin{array}{cc}
\cos^22\pi\tau & \sin^22\pi\tau \\
\sin^22\pi\tau & \cos^22\pi\tau \end{array} \right),\ \  C_{\tau}=
\left(
\begin{array}{cc}
-\cos 2\pi\tau \sin2\pi\tau & \cos 2\pi\tau \sin 2\pi\tau \\
\cos 2\pi\tau \sin 2\pi\tau & -\cos 2\pi\tau \sin 2\pi\tau  \end{array}
\right).$$ 
Thus, the path connects the
identity to $V_{1/4}=J\oplus J\oplus\ldots$.

For $1/4\le \tau\le 1/2$, we continue the path by $V_{\tau}=J\oplus
A_{\tau}\oplus A_{\tau}\oplus\ldots$. Thus $V_{1/2}=J\oplus I\oplus
I\oplus \ldots$, as desired. Note that since $A_{1/4}=J\oplus J$,
two definitions of $V_{1/4}$ coincide and therefore it is well
defined. Clearly, the paths are independent of $1\le q<\infty$, and
an easy computation shows that $\|V_{\tau}\|\le 2$ for all $0\le
\tau\le 1/2$.
\end{proof}

Now for each $j$ let
$T_j:(\ell_{q_j}\oplus\ell_{q_j})_{q_j}\to\ell_{q_j}$ be a linear
isometry, and consider the following isometries from
$(\ell_{q_j}\oplus\ell_{q_j}\oplus\ell_{q_j})_{q_j}$ onto
$(\ell_{q_j}\oplus\ell_{q_j})_{q_j}$ induced by $T_j$'s:
$$\begin{array}{rcl}
S_0^j(u, v, w)&=&(T_j(u, v), w)\ \textrm{ and}\\
 S_1^j (u, v, w)&=&(v, T_j(u,w))\ \textrm{ for }\
(u,v,w)\in (\ell_{q_j}\oplus\ell_{q_j}\oplus\ell_{q_j})_{q_j}.
\end{array}$$

\begin{lem}\label{lem:paths}
For all $j$ and $0\le \tau\le 1$, there is a homogeneous
norm-preserving homeomorphism
$h_{\tau}^j:(\ell_{q_j}\oplus\ell_{q_j}\oplus\ell_{q_j})_{q_j}\to
(\ell_{q_j}\oplus\ell_{q_j})_{q_j}$ such that $h_0^j=S_0^j$ and
$h_1^j=S_1^j$, and such that there is a constant $K$ (independent of
$j$) for which
$$\|h_{\tau}^j(x)-h_{\eta}^j(y)\|\le K\big(\|x-y\|+|\tau-\eta|\max(\|x\|,
\|y\|)\big)$$ and similarly for their inverses.
\end{lem}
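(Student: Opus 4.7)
The plan is to connect $S_0^j$ to $S_1^j$ through two half-homotopies: first deform the source by a path ending at the coordinate swap $(u,v,w)\mapsto(u,w,v)$, so that $S_0^j$ composed with this swap produces $(T_j(u,w),v)$; then deform the target by a path ending at the coordinate swap $(a,b)\mapsto(b,a)$ to convert $(T_j(u,w),v)$ into $(v,T_j(u,w))=S_1^j(u,v,w)$. Lemma~\ref{lem:path} gives an explicit linear path $V_\tau$ on $(\ell_{q_j})^3$ ($\tau\in[0,1/2]$) from the identity to the three-summand swap, and the same method applied to two summands yields an analogous path $W_\sigma$ ($\sigma\in[0,1/2]$) on $(\ell_{q_j})^2$ from the identity to the two-summand swap.

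To upgrade these linear but not norm-preserving paths to norm-preserving homeomorphisms, I would exploit the fact that in the Hilbert case $q=2$ the blocks $A_\tau$ appearing in the proof of Lemma~\ref{lem:path} are actually orthogonal. Writing $B_\tau=c^2I+s^2J$ and $C_\tau=cs(J-I)$ with $c=\cos2\pi\tau$, $s=\sin2\pi\tau$ and $J=\left(\begin{smallmatrix}0&1\\1&0\end{smallmatrix}\right)$, one checks $B_\tau^2+C_\tau^2=I$ and $B_\tau C_\tau=C_\tau B_\tau$, whence $A_\tau^{T}A_\tau=I$. Let $V_\tau^{(2)}$ and $W_\sigma^{(2)}$ denote the corresponding orthogonal operators on the Hilbert-space direct sums, and let $\Phi_j\colon(\ell_{q_j})^3\to(\ell_2)^3$ and $\Psi_j\colon(\ell_{q_j})^2\to(\ell_2)^2$ be the coordinatewise Mazur maps, applying $\varphi_{q_j,2}$ to each summand. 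Since $\varphi_{q_j,2}$ is homogeneous of degree one and norm-preserving, the conjugates
$$
\tilde V_\tau := \Phi_j^{-1}\circ V_\tau^{(2)}\circ\Phi_j, \qquad
\tilde W_\sigma := \Psi_j^{-1}\circ W_\sigma^{(2)}\circ\Psi_j
$$
are homogeneous norm-preserving homeomorphisms of the corresponding $\ell_{q_j}$ direct sums, and the coordinatewise structure of $\Phi_j$ forces $\tilde V_{1/2}$ and $\tilde W_{1/2}$ to be the actual coordinate swaps in $\ell_{q_j}$. I then set
$$
h_\tau^j=\begin{cases} S_0^j\circ\tilde V_\tau, & 0\le\tau\le 1/2,\\[2pt]
\tilde W_{\tau-1/2}\circ S_0^j\circ\tilde V_{1/2}, & 1/2\le\tau\le 1,\end{cases}
$$
and a direct computation confirms $h_0^j=S_0^j$, $h_1^j=S_1^j$, continuity at $\tau=1/2$, and that each $h_\tau^j$ is a homogeneous norm-preserving homeomorphism.

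The main obstacle will be establishing the Lipschitz-type estimate with constant $K$ independent of $j$. Since the Mazur map is only H\"older continuous (not Lipschitz) in general, the conjugates $\tilde V_\tau$ and $\tilde W_\sigma$ are a priori only uniformly continuous. The bound has to be extracted using three ingredients: (i) homogeneity of all maps, which reduces the task to the unit ball; (ii) the fact that $V_\tau^{(2)}$ and $W_\sigma^{(2)}$ are uniformly Lipschitz in $\tau$ and have norm at most $2$, which is explicit in the block formulas; and (iii) Proposition~9.2 of \cite{BL}, which provides equi-uniform-continuity estimates for the family $\{\varphi_{q_j,2},\varphi_{2,q_j}\}$ on balls of any fixed radius, with uniformity coming from the containment $(q_j)\subset(2,3)$ in a compact subinterval of $(1,\infty)$. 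Combining these three in the homogeneity-plus-modulus pattern of Theorem~10.28 of \cite{BL} produces the additive form $K(\|x-y\|+|\tau-\eta|\max(\|x\|,\|y\|))$; the delicate part is verifying that the estimates compose in the required additive way, which is where the boundedness of the $q_j$'s is used crucially. The bound for the inverses then follows by the same argument applied to $V_\tau^{-1}$, $W_\sigma^{-1}$, and the Mazur maps $\varphi_{2,q_j}$.
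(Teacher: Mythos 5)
Your overall decomposition into two half-homotopies (first deform the source towards a three-coordinate swap, then deform the target by a two-coordinate swap) matches the paper's, and the observation that the blocks $A_\tau$ of Lemma~\ref{lem:path} become orthogonal when $q=2$ is correct and nicely spotted. The place where the argument breaks is the Mazur-map conjugation, in two ways. First, the coordinatewise map $\Phi_j$ applying $\varphi_{q_j,2}$ to each summand is \emph{not} norm-preserving between the outer direct sums: it carries the norm $(\|u\|^{q_j}+\|v\|^{q_j}+\|w\|^{q_j})^{1/q_j}$ of $(\ell_{q_j}\oplus\ell_{q_j}\oplus\ell_{q_j})_{q_j}$ to $(\|u\|^2+\|v\|^2+\|w\|^2)^{1/2}$, so the conjugates $\tilde V_\tau=\Phi_j^{-1}V_\tau^{(2)}\Phi_j$ do not preserve the $(\ell_{q_j})^3$-norm as your construction requires. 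Second, and more fundamentally, even if one used the global Mazur map on the sum (which is norm-preserving and commutes with permutations), the lemma with $\tau=\eta$ reduces to $\|h_\tau^j(x)-h_\tau^j(y)\|\le K\|x-y\|$, a genuine Lipschitz bound on the unit sphere. But $\varphi_{2,q_j}$ is only $(2/q_j)$-H\"older on the sphere (the coordinate map $t\mapsto|t|^{2/q_j}$ has unbounded derivative near zero since $q_j>2$), and for $\tau$ strictly between the permutation times the composition $\varphi_{2,q_j}\circ V_\tau^{(2)}\circ\varphi_{q_j,2}$ inherits this non-Lipschitz behavior. You flag exactly this difficulty, but the claim that the ``homogeneity-plus-modulus pattern'' absorbs it is a hope rather than a proof: homogeneity only reduces to the sphere, and there the required Lipschitz constant would have to be infinite.

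The paper avoids both problems by staying in the linear category until the very last step. It defines $S_\tau^j$ as a composition of \emph{linear} operators: for $\tau\le 1/2$ it uses $V_\tau^j$ of Lemma~\ref{lem:path} directly (with its fixed, $q$-independent matrix form, rather than its $q=2$ avatar conjugated by Mazur maps), and for $\tau\ge 1/2$ it builds a second linear path $U_\tau^j=(E^j)^{-1}\tilde V^j_{(2\tau-1)/4}E^j$ from a purely linear isomorphism $E^j$ derived from $D^j\colon\ell_{q_j}\to\ell_{q_j}(\ell_{q_j})$. This makes $\|S_\tau^j\|$ and $\|(S_\tau^j)^{-1}\|$ uniformly bounded independently of $j$ and $\tau$, and $S_\tau^j$ Lipschitz in $\tau$. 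Norm-preservation is then imposed only at the end by the radial normalization $h_\tau^j(x)=\|x\|\,S_\tau^j(x)/\|S_\tau^j(x)\|$; for a bounded invertible linear map this normalization is Lipschitz with constant of the order of $\|S_\tau^j\|\|(S_\tau^j)^{-1}\|$, which is exactly what produces the desired estimate. The moral: normalize once at the end, rather than try to keep the intermediate path norm-preserving by nonlinear conjugation.
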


\begin{proof} For all $j$ and $0\le \tau\le 1/2$, let $V^j_\tau$ be given by Lemma \ref{lem:path} for $q_j$.
We define $S^j_\tau$ for all $j$ and $0\leq \tau\leq 1$ as follows. For $0\le \tau \le 1/2$, define $S_{\tau}^j(u, v,
w)=(T_j(u_{\tau},v_{\tau}), w_{\tau})$, where $(u_\tau, v_\tau,
w_\tau)=V_\tau^j(u,v,w)$. For $1/2\le \tau\le 1$, put
$S_{\tau}^j=U_{\tau}^jS_{1/2}^j$ where $U_{\tau}^j$ is a path of
invertible operators on $(\ell_{q_j}\oplus\ell_{q_j})_{q_j}$
connecting the identity to the operator $(u, v)\to (v, u)$. To get
the path $U_{\tau}^j$, start with the isomorphism
$E^j:(\ell_{q_j}\oplus\ell_{q_j})_{q_j}\to
(\ell_{q_j}\oplus\ell_{q_j}\oplus\ldots)_{q_j}$ defined by
$E^j(u,v)=((D^ju)_1, (D^jv)_1, (D^ju)_2, (D^jv)_2, \ldots)$ where
$D^j:\ell_{q_j}\to \ell_{q_j}(\ell_{q_j})$ is an isomorphism. Then
put $U_{\tau}^j=(E^j)^{-1}\tilde{V}_{\tau}^j E^j$ where
$\tilde{V}_{\tau}^j= V^j_{(2\tau-1)/4}$.
% and $V^j_\tau$A_{\tau}\oplus A_{\tau}\oplus\ldots$ as in the first
% part of the proof of Lemma \ref{lem:path} (except take
% $\tau'=\tau-1/2$). 
Note that the norm of $S_{\tau}^j$'s and their
inverses are uniformly bounded, and it is clear from the formulas
that $S_{\tau}^j$'s are Lipschitz in $\tau$. Finally, putting
$h_{\tau}^j(x)=\|x\|S_{\tau}^j(x)/\|S_{\tau}^j(x)\|$ yields the
desired norm-preserving maps. Note that the inverses of the
normalized maps have the same form, that is,
$(h_{\tau}^j)^{-1}(y)=\|y\|(S_{\tau}^j)^{-1}(y)/\|(S_{\tau}^j)^{-1}(y)\|$.
\end{proof}

The desired uniform homeomorphism from $X_A$ onto $X$ will be
defined by $\phi(x)=g_{\|x\|}(x)$ where $g_t(x)=\|x\|\tilde
g_t(x)/\|\tilde g_t(x)\|$, $t>0$ and $\tilde g_t$ is defined below.

Every $x\in X_A$ has a unique representation of the form $\sum_j
u_j+\sum_i x_i$ where $u_j\in \ell_{q_j}$, $q_j\in A$ and $x_i\in
\ell_{p_i}$, $p_i\in D$. For notational convenience we split the
second sum and write this as
$$x=\sum_{j=1}^{\infty}\big(u_j+\sum_{n}x_{j,n}) +\sum_{i\in
I_0}x_i$$ where $x_{j,n}\in \ell_{p_{\langle j,n\rangle}}$ and $I_0$ is the set
of indices which does not belong to any $I_j$'s, $j=1, 2, \ldots$.

Using the Mazur maps we define
$\psi_{j,n}:\big(\ell_{p_{\langle j,n\rangle}}\oplus
\ell_{p_{\langle j,n+1\rangle}}\big)_{c_0}\to (\ell_{q_j}\oplus\ell_{q_j})_{q_j}$
by
$$\psi_{j,n}(x_{j,n},x_{j,n+1})=\big(\varphi_{j,n}(x_{j,n}),
\varphi_{j,n+1}(x_{j,n+1})\big).$$ Then $\psi_{j,n}$'s are
equi-uniform homeomorphisms between $2^n$-balls of the domain and
$2^{1/q_j}2^n$-balls of the range. Let $\omega(\ep)$ denote the
common bound of moduli of continuity of $\varphi_{j,n}$ and
$\psi_{j,n}$ and of their inverses.

For $n=0,1,\ldots$ put $\alpha_n=2^n -1$. For $\alpha_n\le t\le
\alpha_{n+1}$ define
$$\tilde
g_t(x)=\sum_{j=1}^{\infty}\Big[\psi_{j,n}^{-1}\left(h_{2^{-n}(t-\alpha_n)}(u_j,\psi_{j,n}(x_{j,n},
x_{j,n+1}))\right)+\sum_{i\neq n, n+1}x_{j,i}\Big]+\sum_{i\in
I_0}x_i.$$

Here for each $j$, the map only replaces three coordinates in the
block $(u_j, \ldots, x_{j, n}, x_{j, n+1},\ldots)$ by $(\ldots,
y_{j,n}, y_{j,n+1}, \ldots)$ where $(y_{j,n},
y_{j,n+1})=\psi_{j,n}^{-1}\left(h_{2^{-n}(t-\alpha_n)}(u_j,\psi_{j,n}(x_{j,n},
x_{j,n+1}))\right)$. Note that for $t=\alpha_n$, $\tilde g_t$ is
defined twice, however, the two definitions using $h_1^j(u_j,
\psi_{j, n-1}(x_{j, n-1}, x_{j,n}))$ and the one using $h_0^j(u_j,
\psi_{j, n}(x_{j, n}, x_{j,n+1}))$ coincide, therefore it is
well-defined.

It is clear from the definition that $\tilde g_t$'s are homogeneous.
It remains to check $\{\tilde g_t: 0\le t<\infty\}$ is a family of
equi-uniform homeomorphisms. This will imply the same for the
norm-preserving $g_t$'s (see the remarks before Theorem 10.28,
\cite{BL}).

Let $x=\sum_{j=1}^{\infty}\big(u_j+\sum_{n}x_{j,n}) +\sum_{i\in
I_0}x_i$ and $y=\sum_{j=1}^{\infty}\big(v_j+\sum_{n}y_{j,n})
+\sum_{i\in I_0}y_i$ be in $X_A$ such that $\|x\|, \|y\|\le
\alpha_{n+1}$ and $\|x-y\|<1$, and let $\alpha_n\le t, s\le
\alpha_{n+1}$. Then $\|\tilde g_t(x)-\tilde g_s(y)\|$ is bounded by
$$
\Big\|\displaystyle\sum_{j}
\left[\psi_{j,n}^{-1}\Big(h_{2^{-n}(t-\alpha_n)}(u_j,\psi_{j,n}(x_{j,n},
x_{j,n+1}))\Big)-
\psi_{j,n}^{-1}\Big(h_{2^{-n}(s-\alpha_n)}(v_j,\psi_{j,n}(y_{j,n},
y_{j,n+1}))\Big) \right]\Big\|_{c_0}
$$
$$
%\begin{array}{l}
+\ \Big\| \sum_j \sum_{i\neq n,n+1}(x_{j,i}-y_{j,i})+\sum_{i\in I_0}(x_i-y_i)\Big\|_{c_0}.\ \
\ \ \ \ \ \ \ \ \ \ \ \ \ \  \ \ \  \ \ \ \  \ \ \  \ \ \ \ \ \ \ \ \ \ \ \ \ \ \ \ \ \ \ \ \
\ \ \ \
%\end{array}
$$
The second term is bounded by $\|x-y\|$. By Lemma \ref{lem:paths},
for all $j$,
$$\left\|\psi_{j,n}^{-1}\Big(h_{2^{-n}(t-\alpha_n)}(u_j,\psi_{j,n}(x_{j,n},
x_{j,n+1}))\Big)-
\psi_{j,n}^{-1}\Big(h_{2^{-n}(s-\alpha_n)}(v_j,\psi_{j,n}(y_{j,n},
y_{j,n+1}))\Big)\right\|$$ is bounded by
$$\omega\big(K\big\{\|u_j-v_j\|+\omega(\|x_{j,n}-y_{j,n}\|+\|x_{j,
n+1}-y_{j,n+1}\|)\big\}+\alpha_{n+1}|2^{-n}s-2^{-n}t|\big),$$ where
the constant $K$ and the function $\omega$ is independent of $j$.
Since $\ep\le C\omega(\ep)$ for some constant $C$ and all $\ep\le
1$, and since the estimates are independent of $j$'s, it follows that
there exist constants $L_1$ and $L_2$ such that
$$\|\tilde g_t(x)-\tilde g_s(y)\|\le L_1\omega\big (L_2
\omega(\|x-y\|)+|s-t|\big).$$ The same estimates hold for the
inverses as well.
\end{proof}

\end{document}